\newtheorem{theorem}{Theorem}[section]
\newtheorem{lemma}[theorem]{Lemma}
\newtheorem{prop}[theorem]{Proposition}
\theoremstyle{definition}
\newtheorem{example}[theorem]{Example}
\newtheorem{question}[theorem]{Question}
\newtheorem{remark}[theorem]{Remark}
\newtheorem*{notation}{Notation}
\newtheorem{hyp}{}
\newtheorem{hypp}{}
\theoremstyle{remark}
\newcommand{\cC}{\mathcal{C}}
\newcommand{\cK}{\mathcal{K}}
\newcommand{\cO}{\mathcal{O}}
\newcommand{\cP}{\mathcal{P}}
\newcommand{\bG}{\mathbf{G}}
\newcommand{\bC}{\mathbb{C}}
\newcommand{\bF}{\mathbb{F}}
\newcommand{\bZ}{\mathbb{Z}}
\newcommand{\bP}{\mathbb{P}}
\newcommand{\bQ}{\mathbb{Q}}
\newcommand{\bR}{\mathbb{R}}
\newcommand{\eps}{\epsilon}
\newcommand{\dom}{\operatorname{dom}}
\newcommand{\Pic}{\operatorname{Pic}}
\newcommand{\Diff}{\operatorname{Diff}}
\newcommand{\alb}{\operatorname{alb}}
\newcommand{\Aut}{\operatorname{Aut}}
\newcommand{\Exc}{\operatorname{Ex}}
\newcommand{\Per}{\operatorname{Per}}
\newcommand{\Bs}{\operatorname{Bs}}
\newcommand{\GL}{\operatorname{GL}}
\newcommand{\id}{\operatorname{id}}
\newcommand{\im}{\operatorname{Im}}
\newcommand{\Ker}{\operatorname{Ker}}
\newcommand{\Nef}{\operatorname{Nef}}
\newcommand{\NS}{\operatorname{NS}}
\newcommand{\Null}{\operatorname{Null}}
\newcommand{\rank}{\operatorname{rank}}
\newcommand{\Sing}{\operatorname{Sing}}
\newcommand{\SL}{\operatorname{SL}}
\newcommand{\Alb}{\operatorname{Alb}}
\newcommand{\isom}{\simeq}
\newcommand{\ratmap}{\dashrightarrow}
\newcommand{\num}{\equiv}
\DeclareFontFamily{U}{matha}{\hyphenchar\font45}
\DeclareFontShape{U}{matha}{m}{n}{
      <5> <6> <7> <8> <9> <10> gen * matha
      <10.95> matha10 <12> <14.4> <17.28> <20.74> <24.88> matha12
      }{}
\DeclareSymbolFont{matha}{U}{matha}{m}{n}
\DeclareMathSymbol{\abxcup}{\mathbin}{matha}{'131}
\begin{document}

\title[Periodic subvarieties of a projective variety]{Periodic subvarieties of a projective variety under the action of a maximal rank abelian group of positive entropy}

\author{Fei Hu}
\address{\textsc{Department of Mathematics} \endgraf \textsc{National University of Singapore, 10 Lower Kent Ridge Road, Singapore 119076}}
\email{\href{mailto:hf@u.nus.edu}{\tt hf@u.nus.edu}}

\author{Sheng-Li Tan}
\address{\textsc{Department of Mathematics} \endgraf \textsc{East China Normal University, 500 Dongchuan Road, Shanghai 200241, P.R. China}}
\email{\href{mailto:sltan@math.ecnu.edu.cn}{\tt sltan@math.ecnu.edu.cn}}

\author{De-Qi Zhang}
\address{\textsc{Department of Mathematics} \endgraf \textsc{National University of Singapore, 10 Lower Kent Ridge Road, Singapore 119076}}
\email{\href{mailto:matzdq@nus.edu.sg}{\tt matzdq@nus.edu.sg}}

\dedicatory{Dedicated to Professor Ngaiming Mok on the occasion of his 60th birthday}

\begin{abstract}
We determine positive-dimensional $G$-periodic proper subvarieties of an $n$-dimensional normal projective variety $X$ under the action of an abelian group $G$ of maximal rank $n-1$ and of positive entropy.
The motivation of the paper is to understand the obstruction for $X$ to be $G$-equivariant birational to the quotient variety of an abelian variety modulo the action of a finite group.
\end{abstract}

\subjclass[2010]{
14J50, 
32M05, 
32H50, 
37B40. 
}

\keywords{automorphism, complex dynamics, iteration, topological entropy}


\maketitle


%
%
%
%

\section{Introduction}\label{periodic-section-intro}

\noindent
We work over the field $\bC$ of complex numbers.
Let $X$ be a normal projective variety of dimension $n\ge 2$.
Denote by $\NS(X)\coloneqq\Pic(X)/\Pic^0(X)$ the {\it N\'eron--Severi group},
\index{N\'eron--Severi group}
i.e., the (finitely generated) abelian group of Cartier divisors modulo algebraic equivalence.
The rank of its torsion-free part is called the {\it Picard number} of $X$.
For a field $\bF = \bQ$, $\bR$ or $\bC$, ${\NS_\bF(X)}$ stands for ${\NS(X)} \otimes_{\bZ} \bF$.
The {\it first dynamical degree} of an automorphism $g\in\Aut(X)$ is defined as the {\it spectral radius} of its natural pullback action $g^*$ on $\NS_\bC(X)$: \index{dynamical degree!first dynamical degree}
$$d_1(g)\coloneqq\rho\big(g^*|_{\NS_\bC(X)}\big)\coloneqq\max\big\{|\lambda|:\lambda\textrm{ is an eigenvalue of } g^*|_{\NS_\bC(X)}\big\}.$$
Note that by Lemmas \ref{periodic-lemma-dyn-degree} and \ref{periodic-lemma-max-ev},
our definition of the first dynamical degree for possibly singular varieties coincides with
the usual one defined in Dinh--Sibony \cite[\S2.1]{DS04} for compact K\"ahler manifolds.
Also, by the fundamental work of Gromov \cite{Gromov03} and Yomdin \cite{Yomdin87},
the (topological) {\it entropy} of an automorphism $g\in\Aut(X)$ can be defined as
the logarithm of the spectral radius of the pullback action $g^*$ on the total cohomology ring $\oplus H^i(X,\bC)$.
Then by \cite[Corollary 2.2]{DS04}, an automorphism $g$ is of {\it positive entropy} (resp. {\it null entropy}), if and only if $d_1(g)>1$ (resp. $d_1(g)=1$).
\index{entropy!positive entropy} \index{entropy!null entropy}
See also \cite[Lemma 2.2]{Zhang09-JDG} and references therein.

For a subgroup $G\le\Aut(X)$, we define the {\it null-entropy subset} of $G$ as
$$N(G)\coloneqq\big\{g\in G : g \textrm{ is of null entropy, i.e., } d_1(g)=1\big\}.$$
We then call such $G\le\Aut(X)$ is of {\it positive entropy} (resp. {\it null entropy}),
if $N(G)=\{\id\}$ (resp. $N(G)=G$).
Assuming that $X$ is a compact K\"ahler manifold and $G$ is commutative, Dinh--Sibony \cite{DS04} showed that
$G$ contains a free abelian subgroup $G_1$ of positive entropy such that $\rank G_1 \le n-1$.
If $\rank G_1 = n-1$, then the null-entropy subset $N(G)$ is finite.

In general, we have a Tits alternative\footnote{Tits alternative is named after Jacques Tits, who first proved in \cite{Tits72} the deep and remarkable fact that
general linear groups satisfy this property.}
type result for any subgroup $G \le \Aut(X)$.
That is, either $G$ contains a subgroup isomorphic to the non-abelian free group $\bZ*\bZ$,
or $G$ is virtually solvable (i.e., a finite-index subgroup of $G$ is solvable).
\index{virtually!solvable}
In the latter case or when $G|_{\NS_\bC(X)}$ is virtually solvable,
there is a finite-index subgroup $G_1$ of $G$ such that $N(G_1)$ is a normal subgroup of $G_1$
and $G_1/N(G_1)$ is a free abelian group of rank $r\le n-1$.
We call this $r$ the {\it dynamical rank} of $G$ and denote it as $r = r(G)$,
which is independent of the choice of the finite-index subgroup $G_1$ of $G$.
See \cite{CWZ14, Dinh12, Zhang09-Invent} and references therein for details.

When the dynamical rank of $G$ is maximal (i.e., $r=n-1$), inspired by Dinh--Sibony \cite{DS04},
we expect in general that $N(G)$ is finite except the case when $X$ is an abelian variety.
This has been confirmed recently in \cite{DHZ15}.
Note that there indeed exist examples of abelian varieties and their quasi-\'etale quotients
admitting the action of commutative groups with maximal dynamical rank
(cf.~\cite[Example 4.5]{DS04}; see also our Example \ref{periodic-ex-rat-Q-ab}).
On the other hand, we are particularly interested in the geometry of those projective varieties
with the action of maximal rank abelian groups of positive entropy.
Along this direction, the third-named author obtained the following partial result already.

\begin{theorem}[{\cite{Zhang16-TAMS}}] \label{periodic-Zhang-main-thm}
Let $X$ be a normal projective variety of dimension $n\ge 3$ with at worst $\bQ$-factorial klt singularities,
and $G\le\Aut(X)$ such that the group $G^*\coloneqq G|_{\NS_\bC(X)}$ induced by the pullback action of $G$ on $\NS_\bC(X)$ is isomorphic to $\bZ^{\oplus n-1}$,
and every element of $G^*\setminus\{\id\}$ is of positive entropy.
Assume further that any one of the following conditions holds.
\begin{enumerate}[{\em (i)}]
  \item $X$ is not rationally connected\footnote{An algebraic variety $X$ is {\it rationally connected} (resp. {\it rationally chain connected}) in the sense of Campana and Koll\'ar--Miyaoka--Mori,
  if any two closed points on $X$ are contained in an irreducible rational curve (resp. a chain of rational curves).}.
  \item $X$ has no $G$-periodic\footnote{A Zariski closed subset $Z$ of $X$ is {\it $G$-periodic} if a finite-index subgroup of $G$ set-theoretically stabilizes $Z$.}
  proper subvariety of positive dimension.
\end{enumerate}
Then after replacing $G$ by a finite-index subgroup, $X$ is $G$-equivariant birational to a quasi-\'etale torus quotient.
\end{theorem}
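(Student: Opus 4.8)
The plan is to marry the linear-algebraic structure of a maximal-rank positive-entropy action with an equivariant run of the minimal model program, and then to invoke the Beauville--Bogomolov decomposition for klt varieties with numerically trivial canonical class. First I would pass to a finite-index subgroup of $G$ so that every canonical construction below (extremal contractions, the maximal rationally connected fibration, albanese-type maps) becomes genuinely $G$-equivariant; this affects neither hypothesis nor conclusion. Since $G^*\isom\bZ^{\oplus n-1}$ has positive entropy and maximal dynamical rank, the Dinh--Sibony theory \cite{DS04} supplies common nef eigenclasses $v_1,\dots,v_n\in\NS_\bR(X)$ with $g^*v_i=\chi_i(g)\,v_i$ for characters $\chi_i\colon G\to\bR_{>0}$, with $v_1\cdots v_n>0$ and $\prod_{i=1}^n\chi_i\equiv 1$; the positive-entropy hypothesis forces each $\chi_i$ to be nontrivial and the vectors $\log\chi_i$ to be in general position. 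I would record at the outset that $K_X$ is $G$-invariant, being preserved by automorphisms.

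The first substantive step is to show that $X$ is \emph{not uniruled}. I would consider the maximal rationally connected fibration $\pi\colon X\ratmap Z$, which is canonical and hence $G$-equivariant after the reduction above, with rationally connected general fibre $F$ and with $Z$ not uniruled by Graber--Harris--Starr. By the subadditivity of the dynamical rank along an equivariant fibration \cite{Dinh12, Zhang09-Invent} one has $r(G)\le r(G|_Z)+r(G|_F)$, while the general Dinh--Sibony bound gives $r(G|_Z)\le\dim Z-1$ and $r(G|_F)\le\dim F-1$ whenever these dimensions are positive. Were the fibration nontrivial, i.e.\ $0<\dim Z<n$, these would combine to $n-1=r(G)\le(\dim Z-1)+(\dim F-1)=n-2$, a contradiction; hence either $\dim Z=n$ ($X$ non-uniruled) or $\dim Z=0$ ($X$ rationally connected). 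I would then exclude the rationally connected case under either hypothesis: under (i) this is immediate, while under (ii) a rationally connected $X$ carrying such a $G$ must produce a $G$-periodic proper subvariety of positive dimension (an exceptional divisor, flipping locus, or fibre along the $G$-equivariant MMP toward a Mori fibre space, or the base of such a fibration), contradicting the hypothesis. Either way $X$ is non-uniruled.

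Being non-uniruled and klt, $X$ has $K_X$ pseudo-effective by Boucksom--Demailly--P\u{a}un--Peternell, so I would run a $G$-equivariant MMP: each step contracts a $G$-periodic extremal face, so after a further finite-index reduction the program stays equivariant and terminates at a minimal model $X'$, $G$-equivariantly birational to $X$, with $K_{X'}$ nef and $G$ still of maximal rank and positive entropy. Next I would prove $K_{X'}\num 0$. The eigenclasses $v_i$ descend to $X'$; for each $i$, applying $g^*$ to the invariant number $K_{X'}\cdot\prod_{j\ne i}v_j$ multiplies it by $\prod_{j\ne i}\chi_j(g)=\chi_i(g)^{-1}\ne 1$ for suitable $g$, forcing $K_{X'}\cdot\prod_{j\ne i}v_j=0$. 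Since the nef curve class $v_1\cdots v_{n-1}$ meets $v_n$ positively (hence is nonzero) and $K_{X'}$ is nef with $K_{X'}\cdot v_1\cdots v_{n-1}=0$, the numerical dimension of $K_{X'}$ vanishes; as $K_{X'}$ is nef this gives $K_{X'}\num 0$. More conceptually, the positive-entropy maximal-rank hypothesis prohibits a nonzero $G$-invariant nef class of positive numerical dimension, since such a class would single out a $G$-invariant direction incompatible with the general position of the $\log\chi_i$; an abundance-type input then ensures the minimal model is good.

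Finally, $X'$ would be a projective klt variety with $K_{X'}\num 0$ admitting an abelian automorphism group of maximal rank and positive entropy. By the Beauville--Bogomolov decomposition for klt varieties (Greb--Kebekus--Peternell, Druel, H\"oring--Peternell), a finite quasi-\'etale cover $X''\to X'$ splits, $G$-equivariantly after finite index, as a product of a complex torus with Calabi--Yau and irreducible-holomorphic-symplectic factors. Each non-torus factor admits only a restricted action on its cohomology --- its holomorphic forms are preserved, capping the attainable spectral radii and hence its positive-entropy rank strictly below $(\dim)-1$ --- so by the same rank-subadditivity applied to the product, a non-torus factor would violate maximality of $r(G)$. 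Thus $X''$ is a torus, $X'$ is a quasi-\'etale torus quotient, and $X$ is $G$-equivariantly birational to it. The main obstacle I anticipate is precisely this final equivariant classification: one must arrange the entire MMP \emph{and} the Beauville--Bogomolov splitting to be $G$-equivariant after a single finite-index passage, and then eliminate the Calabi--Yau and symplectic factors purely from the dynamical constraints. The rationally connected case excluded in the second step is the other genuinely delicate point, and is exactly why hypotheses (i) and (ii) are imposed.
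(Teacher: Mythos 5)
This theorem is not proved in the present paper; it is quoted from \cite{Zhang16-TAMS}, whose machinery is partly reproduced here (Lemma \ref{periodic-lemma-big-A}, Lemma \ref{periodic-lemma-Null-A}, Proposition \ref{periodic-prop-Zhang-klt}). Measured against that proof, your proposal has a fatal gap precisely at the step where you ``exclude the rationally connected case under (ii).'' That case cannot be excluded: Example \ref{periodic-ex-rat-Q-ab} of this paper ($X=E^n/\mu_m$ with $2\le n<m$) is a $\bQ$-factorial klt, \emph{rationally connected} variety carrying a $\bZ^{\oplus n-1}$-action of positive entropy, with $K_X\sim_\bQ 0$ and, by Lemma \ref{periodic-lemma-A}, with \emph{no} positive-dimensional $G$-periodic proper subvariety. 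Your proposed contradiction (``an exceptional divisor, flipping locus, or fibre along the $G$-equivariant MMP toward a Mori fibre space'') evaporates here because $K_X$ is torsion, so there is no MMP toward a Mori fibre space; the coexistence of rational connectedness with $K_X\sim_\bQ 0$ is exactly what non-canonical klt singularities permit, as the introduction emphasizes. Consequently your intermediate conclusion ``either way $X$ is non-uniruled'' is false, and everything downstream (BDPP, the minimal model, the Beauville--Bogomolov splitting of a non-uniruled $X'$) does not apply to the case that hypothesis (ii) is actually designed to capture.

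The second weak point is your elimination of the non-torus factors of the decomposition: asserting that a Calabi--Yau or symplectic factor has ``positive-entropy rank strictly below $(\dim)-1$ because its holomorphic forms are preserved'' is essentially the theorem itself (whether, say, a strict Calabi--Yau threefold can carry a rank-$2$ positive-entropy abelian action is precisely what must be ruled out, and invariance of the volume form does not bound $d_1$). The proof in \cite{Zhang16-TAMS} (building on \cite{Zhang13}) takes a different and case-free route once it reaches the situation $\Per_+(X,G)=\varnothing$: there $A=\sum L_i$ is \emph{ample} by Lemma \ref{periodic-lemma-Null-A}, the character computation gives $K_X\cdot L_{i_1}\cdots L_{i_{n-1}}=0$ and $K_X^2\cdot L_{i_1}\cdots L_{i_{n-2}}=0$, whence $K_X\equiv 0$ by the Hodge index theorem, and likewise $\widehat{c}_2(X)\cdot A^{n-2}=0$; the characterization of quasi-\'etale torus quotients among klt varieties with $c_1\equiv 0$ and vanishing second orbifold Chern class (Shepherd-Barron--Wilson, Greb--Kebekus--Peternell) then yields the conclusion directly, rationally connected or not. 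Condition (i) is reduced to this situation by the $G$-MMP of Proposition \ref{periodic-prop-Zhang-klt} with $D=0$. The parts of your sketch that do align with this (the eigenclass setup, the triviality of the MRC fibration by the rank bound, the equivariant MMP and $K_{X'}\equiv 0$ under (i)) are sound, but they do not carry the weight of the proof.
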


By a {\it quasi-\'etale torus quotient}, \index{quasi-\'etale torus quotient}
we mean a quotient of an abelian variety $T$ by a finite group $F$,
which acts freely on $T$ outside a codimension-$2$ subset of $T$.
Note that such $T\to T/F$ is \'etale in codimension-$1$.
The purpose of this paper is to understand the obstruction for a normal projective variety $X$ with the action
of a maximal rank abelian group $G$ of positive entropy, to be $G$-equivariant birational to a quasi-\'etale torus quotient.
By virtue of \cite{Zhang16-TAMS} and \cite{DHZ15}, the remaining case we need to consider is the case when $X$ is rationally connected
or contains some non-trivial $G$-periodic proper subvariety of positive dimension.

It should be noted that there are rationally connected varieties which also have quasi-\'etale covers by abelian varieties (see Example \ref{periodic-ex-rat-Q-ab} for details).
The seemingly non-compatible rational connectivity and being quasi-\'etale torus quotient are allowed to co-exist, due to the existence of non-canonical klt singularities.
More precisely, a quasi-\'etale torus quotient which has at worst canonical singularities must be non-uniruled (even have vanishing Kodaira dimension by Koll\'ar--Larsen \cite[Theorem~10]{KL09}) and hence is not rationally connected.

Our main results are Theorems \ref{periodic-thm-ThmA'}, \ref{periodic-thm-ThmB'},
\ref{periodic-thm-ThmC} and Proposition \ref{periodic-prop-finite-periodic} below.

\begin{theorem} \label{periodic-thm-ThmA'}
Let $X$ be a normal projective variety of dimension $n\ge 2$, and $G\le\Aut(X)$ such that the following conditions are satisfied.
\begin{enumerate}[{\em (i)}]
  \item \label{periodic-thm-ThmA_i} $X$ has at worst $\bQ$-factorial klt singularities.
  \item \label{periodic-thm-ThmA_ii} $G|_{\NS_\bC(X)}$ is virtually solvable with maximal dynamical rank $r(G) = n-1$.
\end{enumerate}
Then after replacing $G$ by a finite-index subgroup, the following assertions hold.
\begin{enumerate}[{\em (1)}]
  \item \label{periodic-thm-ThmA_1} The union $\Per_+(X,G)$ of all positive-dimensional $G$-periodic proper subvarieties of $X$ is a Zariski closed proper subset of $X$.
  \item \label{periodic-thm-ThmA_2} Let $\Per_+(X,G) = Z_1 \cup Z_2 \cup \cdots \cup Z_m$ be the irreducible decomposition.
  Then either $Z_k$ is uniruled\footnote{A variety $V$ of dimension $d$ is {\it uniruled},
  if there exists a dominant rational map $\bP^1\times W \ratmap V$ for some variety $W$ of dimension $d-1$.
  Note that being uniruled is a birational property.}, or a finite-index subgroup of $G$ fixes $Z_k$ pointwise.
  \item \label{periodic-thm-ThmA_3} The Picard number $\rho(X) \ge n$.
  If $\rho(X) = n \ge 3$, then $X$ is $G$-equivariant birational to a quasi-\'etale torus quotient.\footnote{We remark that if the Picard number $\rho(X) > n^2$, then $X$ is not equal to a quasi-\'etale torus quotient.
  Indeed, $X$ is then not dominated by any abelian variety $T$ via a generically finite surjective morphism.
  This is because the Picard number $\rho(T) \le (\dim T)^2 = n^2$.}
  \item \label{periodic-thm-ThmA_4} Either $X$ is an abelian variety and hence has no positive-dimensional $G$-periodic proper subvariety, or $X$ has at most $\rho(X)-n$ distinct $G$-periodic prime divisors.
\end{enumerate}
\end{theorem}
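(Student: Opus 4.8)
The plan is to derive everything from the simultaneous nef eigenclasses attached to a maximal-rank abelian group of positive entropy by the Dinh--Sibony and Zhang theory. After replacing $G$ by a finite-index subgroup I may assume $G^\ast = G|_{\NS_\bC(X)}\cong\bZ^{\oplus n-1}$ is of positive entropy and that every $G$-periodic subvariety is genuinely $G$-invariant. The theory then furnishes nef classes $L_1,\dots,L_n\in\NS_\bR(X)$ that are common eigenvectors, $g^\ast L_i=\chi_i(g)\,L_i$, with nontrivial characters $\chi_i\colon G\to\bR_{>0}$ subject to the single relation $\prod_{i=1}^n\chi_i=1$ and to the maximal-rank nondegeneracy that $\prod_{i=1}^n\chi_i^{a_i}=1$ with integers $a_i\ge 0$ forces $a_1=\dots=a_n$; moreover the $L_i$ are linearly independent and $L_{i_1}\cdots L_{i_n}>0$ precisely when $i_1,\dots,i_n$ are pairwise distinct, and vanishes otherwise. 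The master computation is this: if $Z$ is a $G$-invariant subvariety of dimension $d$, then $g_\ast[Z]=[Z]$ yields $[Z]\cdot L_{i_1}\cdots L_{i_d}=\chi_{i_1}(g)\cdots\chi_{i_d}(g)\,[Z]\cdot L_{i_1}\cdots L_{i_d}$ for every index tuple, so a nonzero intersection number would force $\prod_k\chi_{i_k}=1$; since the resulting exponent vector has entries $a_i\ge 0$ summing to $d<n$, it cannot be proportional to $(1,\dots,1)$, and the nondegeneracy rules this out. Hence every positive-dimensional proper $G$-invariant $Z$ satisfies $[Z]\cdot L_{i_1}\cdots L_{i_d}=0$ for all tuples.

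Set $L\coloneqq L_1+\cdots+L_n$. Expanding $L^n$ and keeping only the distinct-index terms gives $L^n=n!\,L_1\cdots L_n>0$, so $L$ is nef and big; likewise $[Z]\cdot L^d=\sum_{\text{tuples}}[Z]\cdot L_{i_1}\cdots L_{i_d}=0$ for every positive-dimensional proper $G$-invariant $Z$. By Nakamaye's theorem the null locus $\Null(L)$ equals the augmented base locus $\mathbf{B}_+(L)$, a proper Zariski-closed subset, and the vanishing above places every such $Z$ inside $\Null(L)$; a Noetherian induction on $\dim\Null(L)$ then bounds the number of components, giving assertion \ref{periodic-thm-ThmA_1}. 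Linear independence of the $L_i$ gives $\rho(X)\ge n$, the first half of assertion \ref{periodic-thm-ThmA_3}. When $\rho(X)=n$ the classes $L_1,\dots,L_n$ form a basis with diagonalized top intersection form; I would combine this rigidity with assertion \ref{periodic-thm-ThmA_4} (which then allows no $G$-periodic prime divisor, as $\rho(X)-n=0$) to rule out all positive-dimensional $G$-periodic proper subvarieties, and then invoke Theorem \ref{periodic-Zhang-main-thm}(ii) for $n\ge3$ to identify $X$, $G$-equivariantly and birationally, with a quasi-\'etale torus quotient.

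For assertion \ref{periodic-thm-ThmA_2} I pass to a $G$-equivariant resolution of each component $Z_k$ and study the induced action of the finite-index stabilizer. If $Z_k$ is not uniruled, then its canonical class is pseudoeffective by Boucksom--Demailly--Paun--Peternell; combined with the total degeneracy of the restricted classes $L_i|_{Z_k}$, this should force the induced action to have null entropy and to be trivial on $\NS(Z_k)$, which I then upgrade to a pointwise-fixing action, the genuinely positive-entropy alternative forcing $Z_k$ to be uniruled. For assertion \ref{periodic-thm-ThmA_4}, a $G$-invariant prime divisor $D$ has $g^\ast[D]=[D]$, so $[D]$ lies in the fixed space $V_0\coloneqq\NS_\bR(X)^{G^\ast}$; since the independent classes $L_1,\dots,L_n$ all lie in nontrivial weight spaces, $\dim V_0\le\rho(X)-n$. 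A Khovanskii--Teissier/Hodge-index computation pairing divisor classes against the curve classes $\prod_{i\ne j}L_i$ shows this pairing is negative definite on $V_0$, whence distinct $G$-periodic prime divisors have linearly independent classes and their number is at most $\rho(X)-n$; the abelian case is genuinely exceptional, because the irreducibility of the eigenvalue structure of a maximal-rank positive-entropy action on an abelian variety forbids any positive-dimensional periodic proper subvariety.

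The step I expect to be the main obstacle is the dichotomy in assertion \ref{periodic-thm-ThmA_2}: turning the intersection-theoretic degeneracy of $Z_k$ into a usable statement about its induced dynamics, and then cleanly separating the uniruled case from the pointwise-fixed case. Excluding an infinite, non-uniruled invariant subvariety that is merely stabilized but not fixed requires the structure theory of null-entropy automorphism groups together with uniruledness criteria, and is where the argument is most delicate. The negative-definiteness in assertion \ref{periodic-thm-ThmA_4} is a close second, since in dimension greater than two it is not a direct application of the Hodge index theorem but must be extracted from the strict positivity of the nef eigenclasses $L_i$.
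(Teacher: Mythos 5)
Your eigenclass framework for assertion (\ref{periodic-thm-ThmA_1}) and for $\rho(X)\ge n$ is essentially the paper's: the paper packages the ``master computation'' as $\Per_+(X,G)=\Null(A)$ with $A=\sum L_i$ (Lemmas \ref{periodic-lemma-big-A} and \ref{periodic-lemma-Null-A}, Proposition \ref{periodic-prop-Zhang-klt}(5)). But the remaining three assertions each have a genuine gap. For the second half of (\ref{periodic-thm-ThmA_3}), your reduction is invalid: knowing that there is no $G$-periodic \emph{prime divisor} (from $\rho(X)-n=0$) does not rule out lower-dimensional positive-dimensional $G$-periodic subvarieties (e.g.\ $G$-periodic curves on a threefold), which is what condition (ii) of Theorem \ref{periodic-Zhang-main-thm} requires. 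The paper's route is different and unavoidable here: $\rho(X)=n$ forces $K_X\num 0$ by Lemma \ref{periodic-lemma-rho}(3), hence $K_X$ is pseudo-effective, and one invokes \cite[Theorem 2.4]{Zhang16-TAMS} under its pseudo-effectivity hypothesis.

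For (\ref{periodic-thm-ThmA_2}), you correctly identify the obstacle but do not overcome it: null entropy of $G|_{Z_k}$ and triviality on $\NS(Z_k)$ do not yield finiteness (translations of an abelian variety act trivially on $\NS$), so there is no way to ``upgrade to a pointwise-fixing action'' from the data you have on $X$ itself. The missing idea is to run the $G$-equivariant MMP of Proposition \ref{periodic-prop-Zhang-klt} (with $D=0$) down to $Y$, where $K_Y+A_Y$ is \emph{ample}; since $A_Y|_Z\num 0$ on every $G$-periodic $Z$, the class $K_Y|_Z$ is an ample $G$-periodic class on $Z$, and Lemma \ref{periodic-lemma-finite-auto} (non-uniruled plus $G$-periodic ample class implies $G|_Z$ finite) gives pointwise fixing; one then transports non-uniruled components back up the MMP via Lemma \ref{periodic-lemma-comp}, using Lemma \ref{periodic-lemma-HM-RCC} to see they never sit inside exceptional or flipping loci. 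For (\ref{periodic-thm-ThmA_4}), the bound $\dim\NS_\bR(X)^{G^*}\le\rho(X)-n$ is fine, but the linear independence of the classes of distinct $G$-periodic prime divisors is exactly the nontrivial point, and your proposed negative-definiteness fails as stated because $A$ is only nef and big, so the Khovanskii--Teissier form $D\mapsto D^2\cdot A^{n-2}$ is a priori only negative semi-definite on $A^{\perp}$. The paper's Proposition \ref{periodic-prop-PropB} proves independence by an entirely different mechanism: when $q(X)=0$, a relation $\sum a_iB_i\num 0$ produces $E_1\sim E_2$ and hence a nontrivial $G$-equivariant Iitaka fibration contradicting $r(G)=n-1$; when $q(X)>0$, the Albanese map is a $G$-equivariant birational morphism, the $B_i$ are forced to be $\alb_X$-exceptional, and exceptional divisors are independent by the negativity lemma. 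Neither mechanism appears in your proposal.
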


The assertion (\ref{periodic-thm-ThmA_1}) of Theorem \ref{periodic-thm-ThmA'} follows from \cite[Proposition 3.11]{Zhang16-TAMS} or Proposition \ref{periodic-prop-Zhang-klt},
with the help of \cite[Theorem 4.1]{DHZ15} or Proposition~\ref{periodic-prop-finite-null} to deal with the solvable group case.
We include them here for the convenience of the reader.
Note that the condition (\ref{periodic-thm-ThmA_i}) of Theorem~\ref{periodic-thm-ThmA'}
(or Question \ref{periodic-qestion-QnA}) is not restrictive,
since we can always take a $G$-equivariant resolution due to Hironaka \cite{Hironaka77} and even assume that $X$ is smooth.
(See also Koll\'ar's book \cite[3.4.1, Proposition 3.9.1 and Theorem 3.36]{Kollar07} for a modern description.)
Meanwhile, the condition (\ref{periodic-thm-ThmA_ii}) is birational in nature (see Proposition \ref{periodic-prop-finite-null} and \cite[Lemma 3.1]{Zhang16-TAMS}).

If we assume further that $X$ contains a $G$-periodic non-uniruled prime divisor $D$,
we obtain a more clear geometric characterization of the pair $(X,D)$ by theorem below.
The main ingredient is to run a $G$-equivariant Minimal Model Program ($G$-MMP for short) developed in \cite{Zhang16-TAMS}.

\begin{theorem} \label{periodic-thm-ThmB'}
Let $X$ be a normal projective variety of dimension $n\ge 2$, and $G\le\Aut(X)$ such that the following conditions are satisfied.
\begin{enumerate}[{\em (i)}]
  \item \label{periodic-thm-ThmB_i} $G|_{\NS_\bC(X)}$ is virtually solvable with maximal dynamical rank $r(G) = n-1$.
  \item \label{periodic-thm-ThmB_ii} $X$ contains a $G$-periodic non-uniruled prime divisor $D$.
\end{enumerate}
Then after replacing $G$ by a finite-index subgroup, the following assertions hold.
\begin{enumerate}[{\em (1)}]
  \item \label{periodic-thm-ThmB_1} $X$ is rationally connected.
  \item \label{periodic-thm-ThmB_2} Let $Z_1 \cup Z_2 \cup \cdots \cup Z_m$ be the irreducible decomposition of the union of all positive-dimensional $G$-periodic proper subvarieties of $X$, with $Z_1 = D$.
  Then for $k\ge 2$, $Z_k$ is uniruled.
  In particular, every $G$-periodic prime divisor, other than $D$, is uniruled.
  \item \label{periodic-thm-ThmB_3} A finite-index subgroup of $G$ fixes $D$ pointwise.
\end{enumerate}
Furthermore, there exists a surjective in codimension-$1$ $G$-equivariant birational map $X \ratmap Y$ with $D_Y$ the push-forward of $D$, such that we have:
\begin{enumerate}[{\em (1)}] \setcounter{enumi}{3}
  \item \label{periodic-thm-ThmB_4} Every positive-dimensional $G$-periodic proper subvariety of $Y$ is contained in $D_Y$.
  In particular, the positive-dimensional part of $\Sing Y$ is contained in $D_Y$.
  \item \label{periodic-thm-ThmB_5} $K_Y + D_Y \sim_\bQ 0$ ($\bQ$-linear equivalence); both $K_Y$ and $D_Y$ are $\bQ$-Cartier; the pair $(Y, D_Y)$ and hence $Y$ both have at worst canonical singularities.
  \item \label{periodic-thm-ThmB_6} $D_Y$ has at worst canonical singularities and $K_{D_Y} \sim_\bQ 0$.
  \item \label{periodic-thm-ThmB_7} $-mD_Y|_{D_Y}$ is an ample Cartier divisor on $D_Y$ for some integer $m>0$.
\end{enumerate}
\end{theorem}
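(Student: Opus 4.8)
The plan is to combine the $G^*$-eigenspace decomposition of $\NS_\bR(X)$ with a $G$-equivariant $(K_X+D)$-MMP. After passing to a finite-index subgroup I may assume $G^*\cong\bZ^{\oplus n-1}$ with every nontrivial element of positive entropy, so by \cite{DS04} there are nef eigenclasses $\mathbf v_1,\dots,\mathbf v_n$ with characters $\chi_1,\dots,\chi_n$ satisfying $\prod_i\chi_i=1$ and $\chi_i\not\equiv 1$, spanning an $n$-dimensional subspace $V$, while on the invariant subspace $E_1$ the group $G^*$ acts trivially. For $\alpha\in E_1$ (so $g^*\alpha=\alpha$), invariance of the intersection number under $g^*$ gives $\chi_n(g)^{-1}(\alpha\cdot\mathbf v_1\cdots\mathbf v_{n-1})=\alpha\cdot\mathbf v_1\cdots\mathbf v_{n-1}$, whence $\alpha\cdot\mathbf v_1\cdots\mathbf v_{n-1}=0$; as $\mathbf v_1\cdots\mathbf v_{n-1}$ is a nonzero nef curve class, \emph{no nonzero class in $E_1$ is big}. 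Since $[K_X]$ and $[D]$ are $G^*$-invariant they lie in $E_1$, and assertion~(\ref{periodic-thm-ThmB_3}) is immediate from Theorem~\ref{periodic-thm-ThmA'}(\ref{periodic-thm-ThmA_2}): $D$ being non-uniruled, a finite-index subgroup fixes it pointwise.

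For (\ref{periodic-thm-ThmB_1}) I would argue by contradiction. Were $X$ not rationally connected, Theorem~\ref{periodic-Zhang-main-thm} would make it $G$-equivariantly birational to a quasi-\'etale torus quotient $T/F$; but on $T$, hence on $T/F$, a maximal-rank positive-entropy action admits no positive-dimensional proper periodic subvariety, while uniruledness is a birational invariant and every divisor contracted by a klt birational map is uniruled. The non-uniruled $D$ would then transform to a forbidden periodic divisor on $T/F$, a contradiction.

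To construct $Y$ I would first take a $G$-equivariant resolution to assume $X$ is $\bQ$-factorial klt, then run a $G$-equivariant $(K_X+D)$-MMP \cite{Zhang16-TAMS}; each step contracts a $G$-invariant face with uniruled exceptional locus, so $D$ survives with push-forward $D_Y$ and $X\ratmap Y$ is surjective in codimension one. I would exclude the fibre-space outcome: a $(K_Y+D_Y)$-Mori fibration $Y\to B$ would have $G$ acting trivially on $B$ (as the pointwise-fixed $D_Y$ dominates $B$), hence preserving each log-Fano fibre $F$, on which $-(K_F+D_Y|_F)$ is a $G$-invariant ample class forcing null entropy---impossible, since the positive entropy of $G$ localises to the fibres. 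Thus the program terminates with $K_Y+D_Y$ nef, and arranging to contract every invariant uniruled divisor reduces the invariant space to $E_1=\bR[D_Y]$ with $\rho(Y)=n+1$; this gives (\ref{periodic-thm-ThmB_4}), and (\ref{periodic-thm-ThmB_2}) follows, since a non-uniruled $Z_k$ with $k\ge 2$ would avoid the uniruled exceptional locus and descend to a periodic subvariety of $Y$ outside $D_Y$.

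It remains to prove (\ref{periodic-thm-ThmB_5})--(\ref{periodic-thm-ThmB_7}). Because $g$ fixes $D_Y$ pointwise, $\chi_i(g)\,\mathbf v_i|_{D_Y}=\mathbf v_i|_{D_Y}$ forces $\mathbf v_i|_{D_Y}=0$, so restricting an ample $H=\sum_i h_i\mathbf v_i+h_0[D_Y]$ gives $H|_{D_Y}=h_0\,D_Y|_{D_Y}$ ample; hence $D_Y|_{D_Y}$ is ample or anti-ample, and ampleness would make $[D_Y]$ big, impossible in $E_1$, which yields (\ref{periodic-thm-ThmB_7}). Writing $[K_Y]=c[D_Y]$, the nef class $K_Y+D_Y=(c+1)[D_Y]$ cannot satisfy $c+1>0$ (else $[D_Y]$ is nef, contradicting $-D_Y|_{D_Y}$ ample along curves in $D_Y$) nor $c+1<0$ (else nefness forces $D_Y\cdot C'=0$ for every $C'\not\subset D_Y$, impossible as a general curve meets $D_Y$ positively); hence $c=-1$ and $K_Y+D_Y\num 0$, upgraded to $\sim_\bQ 0$ by log abundance for the klt pair, with inversion of adjunction supplying the canonical singularities of (\ref{periodic-thm-ThmB_5}) and adjunction giving $K_{D_Y}\sim_\bQ 0$ in (\ref{periodic-thm-ThmB_6}). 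I expect the main obstacle to be the MMP bookkeeping---$G$-equivariant termination, exclusion of the fibre-space case, and arriving at exactly $E_1=\bR[D_Y]$---after which the numerical identity $c=-1$ and the passage $\num 0\Rightarrow\sim_\bQ 0$ are comparatively formal.
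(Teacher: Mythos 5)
Your overall architecture (equivariant MMP for the pair, assertion (\ref{periodic-thm-ThmB_3}) from Theorem \ref{periodic-thm-ThmA'}(\ref{periodic-thm-ThmA_2}), assertion (\ref{periodic-thm-ThmB_1}) by contradiction with the torus-quotient case) matches the paper, but the core of the argument --- the derivation of $K_Y+D_Y\sim_\bQ 0$ and the singularity statements --- has genuine gaps. The linchpin you are missing is the pseudo-effectivity of $K_X+D$, which the paper proves first (Lemma \ref{periodic-lemma-ps}: if $K_X+D$ were not pseudo-effective, a $(K_X+D)$-MMP would end in a $\bP^1$-fibration with $D$ a cross-section, forcing the non-uniruled $D$ to be rationally connected). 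That pseudo-effectivity is what feeds Proposition \ref{periodic-prop-Zhang-dlt} and yields $K_Y+D_Y\sim_\bQ 0$ directly. Your substitute --- writing $[K_Y]=c[D_Y]$ and pinning down $c=-1$ numerically --- presupposes that after the MMP the $G^*$-invariant part of $\NS_\bR(Y)$ is exactly $\bR[D_Y]$ and that $\rho(Y)=n+1$. Nothing in the MMP guarantees this: invariant classes need not be represented by contractible divisors, $Y$ need not be $\bQ$-factorial at the last step (so $[K_Y]$, $[D_Y]$ need not even lie in $\NS_\bR(Y)$ a priori), and ``arranging to contract every invariant uniruled divisor'' is not a step of any MMP. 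The same unproved basis claim underlies your argument for (\ref{periodic-thm-ThmB_7}); the paper instead gets $-\eps D_Y|_{D_Y}$ ample from the ampleness of $K_Y+(1-\eps)D_Y+A_Y$ (Proposition \ref{periodic-prop-Zhang-klt}(4)) combined with $K_Y+D_Y\sim_\bQ 0$ and $A_Y|_{D_Y}\num 0$.

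Two further steps would fail as written. First, upgrading $K_Y+D_Y\num 0$ to $\sim_\bQ 0$ ``by log abundance for the klt pair'' is not available: the pair $(Y,D_Y)$ has a coefficient-one boundary (it is lc/canonical, not klt), and log abundance is not known in the generality you need; the paper never faces this because the $\bQ$-linear equivalence comes out of Proposition \ref{periodic-prop-Zhang-dlt}(2). Second, ``inversion of adjunction'' does not supply canonical singularities of $(Y,D_Y)$ --- it gives plt/lc-type statements; the paper uses a Zariski $\sigma$-decomposition argument (Lemma \ref{lemma-can-sing}, following \cite{HMZ14}), which also delivers the normality of $D_Y$ needed for (\ref{periodic-thm-ThmB_6}). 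Finally, your exclusion of the Mori-fibre-space outcome rests on the unjustified claim that $D_Y$ dominates the base $B$ ($D_Y$ could perfectly well be vertical or contracted); the clean way out is either the pseudo-effectivity above or the observation that any nontrivial $G$-equivariant fibration forces $r(G)\le n-2$ by \cite[Lemma 2.10]{Zhang09-Invent}.
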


\begin{remark}
In dimension $2$, Theorem \ref{periodic-thm-ThmB'} means that if $X$ is a normal projective surface with an automorphism $g$ of positive entropy and $D$ is an irrational $g$-periodic curve, then $X$ is a rational surface, $D$ is an elliptic curve pointwise fixed by a power of $g$, and all other $g$-periodic curves are rational. See Lemma \ref{periodic-lemma-surface} and Remark \ref{periodic-remark-surface} for an elementary treatment. Also, there indeed exists an explicit example satisfying the conditions (\ref{periodic-thm-ThmB_i}) and (\ref{periodic-thm-ThmB_ii}) in Theorem \ref{periodic-thm-ThmB'}. See \cite[Theorem~2 or Example~3.3]{Diller11}. Indeed, in that example, $X$ is a smooth rational surface and $D$ is a smooth elliptic curve.
\end{remark}

\begin{example} \label{periodic-ex-rat-Q-ab}
Here we give examples of rationally connected varieties
which are quasi-\'etale torus quotients at the same time.

Let $E = E_{\zeta_m}\coloneqq \bC/(\bZ+\bZ\, \zeta_m)$ be the elliptic curve with period $\zeta_m \coloneqq \exp \frac{2 \pi i}{m}$
for some $m\in \{2, 3, 4, 6\}$ and $A = E^n \coloneqq E \times \cdots \times E$.
Let $\mu_m\coloneqq \langle \zeta_m \rangle$, the group of $m$-th roots of unity,
act on $E$ by multiplication and act on $A$ diagonally.
Then the quotient variety $X \coloneqq A/\mu_m$ has at worst $\bQ$-factorial klt singularities by \cite[Proposition 5.20]{KM98}.
Moreover, for any $2 \le n < m$, $X$ is a rationally connected variety, which is also a quasi-\'etale torus quotient.

Indeed, for any $\id \ne g\in \mu_m$, the Zariski closed set $E^g$ of $g$-fixed points in $E$ is a finite subset of $E$, so is $A^g$.
Thus the ramification locus of $A\to X$, as the union of all $g$-fixed points for all $g\ne \id$, is a finite set.
It follows that $X$ is a quasi-\'etale torus quotient when $n\ge 2$.

On the other hand, under the condition $n < m$, the {\it age} of the automorphism $[\zeta_m]$ at a fixed point $\mathrm{o} \in A^{[\zeta_m]}$
is $\frac{n}{m} < 1$ (see \cite[\S 2]{Reid02} for the definition of age).
Then the Reid--Tai criterion implies that $X$ has non-canonical singularities.
Take a resolution $X'$ of $X$.
Note that $K_X$ is $\bQ$-linearly equivalent to zero and $X$ has non-canonical klt singularities.
Thus $K_{X'}$ is not pseudo-effective.
Hence by \cite[Corollary 0.3]{BDPP13} $X'$ is uniruled, so is $X$.
Also, the natural $\SL_n(\bZ)$-action on $A$ descends to $X$.
As in \cite[Example 1.4]{Dinh12}, $\SL_n(\bZ)$ admits a free abelian subgroup isomorphic to $\bZ^{\oplus n-1}$
whose every non-trivial element $g$ has spectral radius $> 1$.
Thus the natural action of $g$ on $A$ is of positive entropy.
In other words, the dynamical rank of $\bZ^{\oplus n-1}|_A$ is maximal,
so is $\bZ^{\oplus n-1}|_X$ by Lemma \ref{periodic-lemma-dyn-degree}.
We then consider the so-called special maximal rationally connected (MRC) fibration
$X\ratmap Y$ of $X$ in the sense of Nakayama \cite[Theorems 4.18 and 4.19]{Nakayama10},
where the general fibres are rationally connected and
the $\bZ^{\oplus n-1}$-action on $X$ descends to a biregular action of $\bZ^{\oplus n-1}$ on $Y$.
The maximality of the dynamical rank implies that this special MRC fibration is trivial
(cf.~\cite[Lemma 2.10]{Zhang09-Invent}).
Thus $Y$ is a point and hence $X$ is rationally connected.
See also Koll\'ar--Larsen \cite[Corollary 25]{KL09} for another proof of the rational connectedness of $X$.

For instance, \cite[Example 4.2]{Zhang91} gives an explicit calculation for the case $(m, n)=(3, 2)$.
\end{example}

From Theorems \ref{periodic-thm-ThmB'} and \ref{periodic-Zhang-main-thm},
we see that the varieties containing $G$-periodic non-uniruled prime divisors
provide potential examples which are not $G$-equivariant birational to quasi-\'etale torus quotients in our setting.
Moreover, a positive answer to the question below roughly means that when $r(G) = n-1$ is maximal,
$X$ is $G$-equivariant birational to a quasi-\'etale torus quotient if and only if $X$ has no non-uniruled $G$-periodic prime divisor.

\begin{question} \label{periodic-qestion-QnA}
Let $X$ be a normal projecitve variety of dimension $n\ge 3$, and $G\le\Aut(X)$ such that the following conditions are satisfied.
\begin{enumerate}[(i)]
  \item $X$ has at worst $\bQ$-factorial klt singularities.
  \item $G|_{\NS_\bC(X)}$ is virtually solvable with maximal dynamical rank $r(G) = n-1$.
\end{enumerate}
Is it true that the following assertions hold?
\begin{enumerate}[(1)]
  \item Suppose that $X$ does not have any $G$-periodic non-uniruled prime divisor.
  Then $X$ is $G$-equivariant birational to a quasi-\'etale torus quotient.
  \item Suppose that $X$ has a $G$-periodic non-uniruled prime divisor.
  Then $X$ is not $G$-equivariant birational to a quasi-\'etale torus quotient.
\end{enumerate}
\end{question}

The theorem below gives an affirmative answer to Question \hyperref[periodic-qestion-QnA]{\ref{periodic-qestion-QnA} (2)},
see also Proposition \ref{periodic-prop-PropA}.
The implications (2) $\Longrightarrow$ (1) and (3) $\Longrightarrow$ (1) below are proved in \cite[Theorem 2.4]{Zhang16-TAMS}.
We include them here for the convenience of the reader.

\begin{theorem} \label{periodic-thm-ThmC}
Let $X$ be a normal projective variety of dimension $n\ge 3$,
and $G\le\Aut(X)$ such that $G|_{\NS_\bC(X)}$ is virtually solvable with maximal dynamical rank $r(G) = n-1$.
Consider the following conditions:
\begin{enumerate}[{\em (1)}]
  \item After replacing $G$ by a finite-index subgroup, $X$ is $G$-equivariant birational to a quasi-\'etale torus quotient $X'$.
  \item After replacing $G$ by a finite-index subgroup, $X$ is $G$-equivariant birational to a projecitve variety $X'$ with only klt singularities,
  such that $X'$ has no positive-dimensional $G$-periodic proper subvariety.
  \item After replacing $G$ by a finite-index subgroup, $X$ is $G$-equivariant birational to a projecitve variety $X'$ with a $G$-periodic divisor $D'$,
  such that $(X', D')$ is $\bQ$-factorial klt and $K_{X'}+D'$ is pseudo-effective.
  \item Every connected component of the union of all positive-dimensional $G$-periodic proper subvarieties of $X$ is rationally chain connected.
\end{enumerate}
Then the conditions {\em (1)}, {\em (2)} and {\em (3)} are equivalent, and imply the condition {\em (4)}.
\end{theorem}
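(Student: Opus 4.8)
The plan is to supply the two implications $(1)\Rightarrow(2)$ and $(1)\Rightarrow(3)$ which, together with the implications $(2)\Rightarrow(1)$ and $(3)\Rightarrow(1)$ quoted from \cite[Theorem~2.4]{Zhang16-TAMS}, yield the equivalence of $(1)$, $(2)$, $(3)$; and then to deduce $(4)$ from $(2)$. At each stage I am free to replace $G$ by a finite-index subgroup.

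For $(1)\Rightarrow(2)$ and $(1)\Rightarrow(3)$ I would take the quasi-\'etale torus quotient $X'=T/F$ provided by $(1)$ itself as the required model. By \cite[Proposition~5.20]{KM98} it is $\bQ$-factorial klt, and since $T\to X'$ is \'etale in codimension one and $K_T\sim_\bQ 0$ we obtain $K_{X'}\sim_\bQ 0$. Taking $D'=0$, the pair $(X',0)$ is then $\bQ$-factorial klt with $K_{X'}+D'\sim_\bQ 0$ pseudo-effective, which gives $(3)$. For $(2)$ it remains to show that $X'$ has no positive-dimensional $G$-periodic proper subvariety. After shrinking $G$, the biregular $G$-action on $X'$ lifts to a biregular action on the abelian variety $T$, compatibly with $T\to X'$; by Lemma~\ref{periodic-lemma-dyn-degree} this lifted action again has maximal dynamical rank $n-1$ and stays virtually solvable on $\NS_\bC(T)$. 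Hence Theorem~\ref{periodic-thm-ThmA'} applies to $T$, and its assertion~(\ref{periodic-thm-ThmA_4}), in the abelian-variety case, shows that $T$ has no positive-dimensional periodic proper subvariety. Since the preimage in $T$ of any positive-dimensional $G$-periodic proper subvariety of $X'$ would be such a subvariety of $T$, none can exist on $X'$, proving $(2)$.

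For $(2)\Rightarrow(4)$ I would fix the model $X'$ from $(2)$ together with a $G$-equivariant common resolution, i.e.\ a smooth projective $W$ with $G$-equivariant birational morphisms $q\colon W\to X$ and $p\colon W\to X'$. Let $C$ be a connected component of $\Per_+(X,G)$ and let $\tilde C\subset W$ be its strict transform under $q$, so that $q(\tilde C)=C$. As $p$ is $G$-equivariant and $X'$ has no positive-dimensional $G$-periodic proper subvariety, the $G$-periodic image $p(\tilde C)$ must be $0$-dimensional; after shrinking $G$ it is a set $\{x'_1,\dots,x'_s\}$ of $G$-fixed points. Then $\tilde C\subset\bigcup_i p^{-1}(x'_i)$, so $C=q(\tilde C)\subseteq\bigcup_i q(p^{-1}(x'_i))$; conversely each $q(p^{-1}(x'_i))$ is connected, $G$-periodic and meets $C$, hence lies in the component $C$. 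Therefore $C=\bigcup_i q(p^{-1}(x'_i))$. Because $X'$ is klt, each fibre $p^{-1}(x'_i)$ of the birational morphism $p$ is connected and rationally chain connected, and so is its image under the morphism $q$. Thus $C$ is a connected finite union of rationally chain connected subvarieties, hence is rationally chain connected, which is $(4)$.

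The step I expect to be the main obstacle is the lifting in $(1)\Rightarrow(2)$: producing from the biregular $G$-action on the quotient $T/F$ a biregular action on the cover $T$ that preserves the maximal dynamical rank, so that assertion~(\ref{periodic-thm-ThmA_4}) of Theorem~\ref{periodic-thm-ThmA'} can be invoked on the abelian variety. One must check that a finite-index subgroup of $G$ lifts to automorphisms of $T$ normalising $F$, and---crucially---that the equality of dynamical ranks survives the passage through the generically finite cover $T\to X'$; this is precisely where the hypothesis that $T\to X'$ is \'etale in codimension one is used.
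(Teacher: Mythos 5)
Your architecture matches the paper's: both quote \cite[Theorem 2.4]{Zhang16-TAMS} for $(2)\Rightarrow(1)$ and $(3)\Rightarrow(1)$, both obtain $(1)\Rightarrow(3)$ by taking $D'=0$ on the quotient itself (quotient singularities are $\bQ$-factorial klt and $K_{X'}\sim_\bQ 0$), and your $(2)\Rightarrow(4)$ argument is essentially the paper's Proposition \ref{periodic-prop-PropA}: resolve the map, observe that the image of a component of $\Per_+(X,G)$ is a finite set of fixed points, and conclude via connectedness of fibres and Lemma \ref{periodic-lemma-HM-RCC}. One small repair there: work with the full preimage $q^{-1}(C)$ rather than a strict transform, since an irreducible component of $C$ contained in the non-isomorphism locus of $q$ need not be dominated by its strict transform; with $q^{-1}(C)$ the identity $q(q^{-1}(C))=C$ is automatic and the rest of your argument goes through unchanged.

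The one genuine gap is exactly the step you flagged in $(1)\Rightarrow(2)$: the $G$-action on $X'=T/F$ need not lift to the \emph{given} abelian variety $T$, even after shrinking $G$, because $T$ is not canonically attached to $X'$ (an automorphism of $X'$ need not preserve the lattice in $\pi_1(X'\setminus\Sing X')$ defining that particular cover). The paper's Lemma \ref{periodic-lemma-A} circumvents this by replacing $T$ with Beauville's minimal split covering $\widetilde T\to X'$, the cover corresponding to the \emph{unique maximal} lattice; being canonical, it carries a lift $\widetilde G$ with $G=\widetilde G/F$, which after passing to a finite-index subgroup acts faithfully on both $\widetilde T$ and $X'$, and Lemma \ref{periodic-lemma-dyn-degree} transfers the maximal-rank hypothesis. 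Note also that once on the abelian variety, citing Theorem \ref{periodic-thm-ThmA'} (\ref{periodic-thm-ThmA_4}) is really citing the same fact you are trying to prove, since the abelian case of that assertion is itself deduced from Lemma \ref{periodic-lemma-A}; the paper instead argues directly that the nef and big divisor $A$ of Lemma \ref{periodic-lemma-big-A} is ample on $\widetilde T$ because every pseudo-effective divisor on an abelian variety is nef (Lemma \ref{periodic-lemma-eff-on-torus}), whence $\Per_+(\widetilde T,\widetilde G)=\Null(A)=\varnothing$ by Lemma \ref{periodic-lemma-Null-A}, and the same holds for $X'$ by Lemma \ref{periodic-lemma-periodic}. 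With the minimal split cover substituted for $T$, your proof is complete; as written, the lifting step is unproven and can fail for a general presentation $T/F$.
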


The following proposition generalizes a well-known result on surface --
there are only finitely many $g$-periodic curves if $g$ is an automorphism of positive entropy on a projective surface.
We prove a result of this type up to dimension $3$ in the present paper.
Naturally, we would like to know whether it is still true in higher dimensions.

\begin{prop} \label{periodic-prop-finite-periodic}
Let $X$ be a normal projecitve variety of dimension $n=2$ or $3$, and $G\le \Aut(X)$ such that the following conditions are satisfied.
\begin{enumerate}[{\em (i)}]
  \item $X$ has at worst $\bQ$-factorial klt singularities.
  \item $G = \langle g_1, \dots, g_{n-1} \rangle \isom \bZ^{\oplus n-1}$ is of positive entropy.
\end{enumerate}
Then for any non-trivial $g\in G$, the following assertions hold.
\begin{enumerate}[{\em (1)}]
  \item \label{periodic-prop-finite-periodic_1} If $X$ is an abelian variety, then there is no $g$-periodic prime divisor.
  \item \label{periodic-prop-finite-periodic_2} If $X$ is not an abelian variety, then there are at most $\rho(X) - n$ distinct $g$-periodic prime divisors.
\end{enumerate}
\end{prop}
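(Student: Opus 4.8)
The plan is to translate the statement into linear algebra on $\NS_\bR(X)$ together with positivity of effective classes, exploiting the common eigenvectors of the maximal-rank group. Note first that this proposition strengthens Theorem~\ref{periodic-thm-ThmA'}\,(\ref{periodic-thm-ThmA_4}) from $G$-periodicity to $\langle g\rangle$-periodicity for a single $g$; the extra difficulty is precisely that a $g$-periodic divisor need not be $G$-periodic, so one cannot simply average over $G$. Since $G\isom\bZ^{\oplus n-1}$ is of positive entropy with maximal dynamical rank $n-1$, the standard structure theory (Dinh--Sibony, Zhang) furnishes $n$ common nef eigenvectors $w_1,\dots,w_n$, real multiplicative characters $\chi_i\colon G\to\bR_{>0}$ with $g^*w_i=\chi_i(g)\,w_i$ and $\prod_i\chi_i\equiv 1$, the normalization $w_{i_1}\cdots w_{i_n}=c>0$ when $\{i_1,\dots,i_n\}=\{1,\dots,n\}$ and $0$ otherwise, and a $G$-invariant splitting $\NS_\bR(X)=V\oplus V'$ with $V=\langle w_1,\dots,w_n\rangle$ and $V'=\{x: x\cdot\prod_{l\ne i}w_l=0\ \forall i\}$ of dimension $\rho(X)-n$ (consistent with $\rho(X)\ge n$ from Theorem~\ref{periodic-thm-ThmA'}\,(\ref{periodic-thm-ThmA_3})). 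After replacing $G$ by the finite-index subgroup stabilizing all the relevant divisors, a $g$-periodic prime divisor $D$ satisfies $(g^k)^*[D]=[D]$, so the $V$-component of $[D]$ lies in the span of those $w_i$ with $\chi_i(g)=1$.

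For assertion (\ref{periodic-prop-finite-periodic_2}) I first treat the generic $g$, namely $\chi_i(g)\ne 1$ for all $i$ (this is automatic when $n=2$, since the single nontrivial character is nonzero on every $g\ne\id$). Then the $V$-component vanishes and $[D]\in V'$. The class $w:=w_1+\cdots+w_n$ is big and nef ($w^n=n!\,c>0$), and the Hodge index theorem for the symmetric form $B(x,y):=x\cdot y\cdot w^{\,n-2}$ gives a single positive direction spanned by $w$; one checks $V'\subseteq w^{\perp_B}$, so $B$ is negative definite on $V'$ (after discarding its radical). Distinct prime divisors satisfy $D_i\cdot D_j\cdot w^{\,n-2}\ge 0$ for $i\ne j$, so any relation $\sum_j t_j[D_j]=0$ splits as $P-N=0$ with $P,N$ effective of disjoint support; negative definiteness forces $P^2\cdot w^{\,n-2}=P\cdot N\cdot w^{\,n-2}\ge 0$ hence $P=N=0$, and pairing with an ample class gives all $t_j=0$. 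Thus the $[D_j]$ are independent in $V'$, whence there are at most $\dim V'=\rho(X)-n$ of them. For $n=2$ this reproduces Lemma~\ref{periodic-lemma-surface}.

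The main obstacle is the remaining case, which occurs only for $n=3$: there exist nontrivial $g$ with exactly one character trivial, say $\chi_3(g)=1$, since $\chi_3$ may vanish on a nonzero lattice point of $G\isom\bZ^{2}$ while any two of $\chi_1,\chi_2,\chi_3$ stay independent by maximality. Here $[D_j]=a_j\,w_3+u_j$ with $u_j\in V'$ and $a_j=(D_j\cdot w_1w_2)/c\ge 0$, so a priori the classes span a $(\rho-2)$-dimensional space. The eigenvalue identity still forces $D_j\cdot w_1w_3=D_j\cdot w_2w_3=D_j\cdot w_1^2=D_j\cdot w_2^2=0$, so each $[D_j]$ is orthogonal, for the form $B_{w_3}(x,y):=x\cdot y\cdot w_3$, to the unique positive direction $w_1+w_2$; hence the $[D_j]$ sit in the negative part of $B_{w_3}$. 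I would then show the projections $u_j$ to $V'$ remain independent by repeating the effectivity argument modulo the radical of $B_{w_3}$, the delicate point being that $w_3$ is nef but not big, so one must rule out a nontrivial combination landing in that radical, equivalently show the $w_3$-components $a_j$ contribute no extra independent direction. I expect controlling this degeneracy to be the crux; the natural alternative is to use the $g$-invariant nef class $w_3$ to produce a $G$-equivariant contraction lowering the dimension and to induct down to the surface case of Lemma~\ref{periodic-lemma-surface}.

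Finally, for assertion (\ref{periodic-prop-finite-periodic_1}), suppose $X$ is an abelian variety. An effective divisor has positive semidefinite Hermitian first Chern form $H$, and $g$-periodicity makes the class invariant under the linear part of $g$, so $H(v,v)=0$ for every eigenvector $v$ whose eigenvalue has modulus $\ne 1$; thus $H$ is supported on the unit-circle eigenspace of the linear part. Because $g$ lies in the maximal-rank positive-entropy group $G$, that eigenspace carries no nonzero integral $(1,1)$-class (it is not defined over the lattice), which forces $H=0$ and contradicts $[D]\ne 0$. Hence there is no $g$-periodic prime divisor, completing the plan.
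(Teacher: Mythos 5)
Your plan diverges from the paper's proof in a way that, as written, leaves the central difficulties unresolved. For assertion (\ref{periodic-prop-finite-periodic_2}), the Hodge-index step is the weak point. For a nef and big class $w=\sum w_i$ the form $B(x,y)=x\cdot y\cdot w^{n-2}$ is only negative \emph{semi}definite on $w^{\perp_B}$, and the $g$-periodic prime divisors land exactly in the degenerate directions: by the very eigenvalue computation you perform (and by Lemma \ref{periodic-lemma-Null-A}), these divisors lie in the null locus of $A$, which is where the radical lives (think of exceptional divisors of the contraction associated to $w$). Concretely, your argument cannot exclude two \emph{distinct} $g$-periodic prime divisors $D_1\ne D_2$ with $[D_1]=[D_2]$ in $\NS_\bR(X)$: taking $P=D_1$, $N=D_2$ satisfies every inequality you write down, and ``pairing with an ample class'' gives $0=0$. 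The paper rules this out (Proposition \ref{periodic-prop-PropB}\,(1)) by passing from $E_1\num E_2$ to $E_1\sim E_2$ (using $q=0$) and deriving a contradiction from the resulting Iitaka fibration being $G$-equivariant --- but that contradiction needs $G$-periodicity, not $g$-periodicity, which is precisely the difficulty you correctly identify at the outset and then do not overcome. The paper's actual route is: (a) prove there are only \emph{finitely many} $g$-periodic prime divisors (via the maximal Iitaka dimension $\kappa$ of sums of such divisors, the pulled-back systems $E_i=g_i^*E_0$, the bound $\kappa\le n-2$, and the Rigidity Lemma \ref{periodic-lemma-rigidity} identifying all the Iitaka fibrations, whence a $G$-equivariant fibration and a contradiction); (b) use commutativity of $G$ and finiteness to conclude each $g$-periodic prime divisor is $G$-periodic; (c) apply Proposition \ref{periodic-prop-PropB}. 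Step (a) is the missing idea. (You also explicitly leave the case $\chi_3(g)=1$ open, but even the ``generic'' case is not closed.)

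For assertion (\ref{periodic-prop-finite-periodic_1}), the sentence ``that eigenspace carries no nonzero integral $(1,1)$-class (it is not defined over the lattice)'' is an assertion of essentially the whole content of Lemma \ref{periodic-lemma-no-g-periodic}, not a proof. When the Hermitian form $H$ of $D$ is degenerate, its kernel is the tangent space of an abelian subvariety $Z$ (Ueno's theorem), and one must analyze the $g$-equivariant quotient $X\to X/Z$ via Dinh--Nguy{\^e}n's relative dynamical degree formula; the case $\kappa(D)=0$, where $Z$ is an abelian surface, genuinely requires bringing in the \emph{other} generators $g_1,g_2$ of $G$ --- either all $g_i$ preserve $Z$ (contradicting maximal dynamical rank via a $G$-equivariant fibration) or some $g_i(Z)\cap Z$ produces a $g$-periodic curve on an abelian surface, which is then excluded separately. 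Nothing about a single $g$ of positive entropy prevents its unit-modulus eigenspace from supporting a rational class (e.g.\ a rational eigenvalue $\pm1$ of the linear part); only the interaction with the full rank-$(n-1)$ group kills it. The paper's own admission that assertion (\ref{periodic-prop-finite-periodic_1}) is not known in dimension $\ge 4$ is a strong signal that no soft eigenspace argument suffices.
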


\section{Preliminary results}\label{periodic-section-pre}

\begin{notation}
We refer to Koll\'ar--Mori \cite{KM98} for the standard definitions, notation, and terminologies in birational geometry.
For instance, see \cite[Definitions 2.34 and 2.37]{KM98} for the definitions of {\it canonical} singularity,
Kawamata log terminal singularity ({\it klt}), divisorial log terminal singularity ({\it dlt}), and log canonical singularity ({\it lc}).
\index{singularity!canonical singularity} \index{singularity!klt singularity}
\index{singularity!dlt singularity} \index{singularity!lc singularity}

Let $X$ be a normal projective variety. \index{$\bQ$-factorial}
$X$ is called {\it $\bQ$-factorial}, if every integral Weil divisor $M$ on $X$ is $\bQ$-Cartier, i.e., $sM$ is a Cartier divisor for some integer $s \ge 1$.

Let $M$ be an $\bR$-Cartier divisor (an $\bR$-linear combination of integral Cartier divisors) on $X$.
We call $M$ is {\it nef}, \index{divisor!nef $\bR$-Cartier divisor}
if the intersection $M\cdot C \ge 0$ for every irreducible curve $C$ on $X$.
Denote by $\Nef(X)$ the closed cone of all nef $\bR$-Cartier divisors on $X$. \index{cone!nef cone}
We call $M$ is {\it pseudo-effective}, if it is contained in the closure of the cone of all effective $\bR$-divisors on $X$.
\index{divisor!pseudo-effective divisor}

For a birational map $f \colon X \ratmap Y$, denote its domain by $\dom f$.
Then for an irreducible subvariety $B$ of $X$ such that $f$ is defined at the generic point of $B$,
define the {\it birational transform} $f(B) \subset Y$ as the Zariski-closure of $f(B \cap \dom f)$ in $Y$.
\index{birational transform}
Then the {\it push-forward} $f_*B$ of $B$ under the birational map $f$ is defined (linearly) as follows:
\begin{equation*}
f_*B \coloneqq \left\{
\begin{array}{cl}
f(B), & \text{if $\dim f(B) = \dim B$}; \\
0, & \text{otherwise}.
\end{array}
\right.
\end{equation*}
In particular, if $f$ is isomorphic in codimension-$1$ and $D$ is a prime divisor, then $f_*D = f(D)$.

For an automorphism $g$ of $X$, we use $g|_X$ to emphasize that $g$ acts on $X$.
For a $g$-invariant subspace $V$ of some cohomology space $H^*(X, \bC)$,
we use $g^*|_V$ to denote the natural pullback action $g^*$ on $V$.
The {\it spectral radius} $\rho\big(g^*|_V\big)$ \index{spectral radius}
is the maximal absolute value of all eigenvalues of $g^*|_V$ as a linear transformation on $V$.
\end{notation}

The result below shows that our notion of the first dynamical degree of an automorphism as in the introduction is equivalent to the same one on its equivariant resolution,
and hence equivalent to the usual definition in Dinh--Sibony \cite[\S2.1]{DS04} by Lemma \ref{periodic-lemma-max-ev}.

\begin{lemma} \label{periodic-lemma-dyn-degree}
Let $X$ and $Y$ be two normal projective varieties of dimension $n \ge 2$,
and $f \colon X \to Y$ a $g$-equivariant generically finite surjective morphism.
Then we have $d_1(g|_X) = d_1(g|_Y)$.
In particular, $g|_X$ is of positive entropy (resp. null entropy) if and only if so is $g|_Y$.
\end{lemma}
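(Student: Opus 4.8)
The plan is to reduce everything to the identity $d_1(g)=\rho\big(g^*|_{\NS_\bC}\big)$ and to compute this spectral radius as the exponential growth rate of the intersection numbers $(g^k)^*H\cdot H^{n-1}$ against a fixed ample class $H$, a quantity which transports cleanly across $f$ via the projection formula. Throughout write $d\coloneqq\deg f$ and denote by $g_X,g_Y$ the automorphisms induced by $g$ on $X,Y$.

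First I would dispose of the easy inequality $d_1(g|_Y)\le d_1(g|_X)$. The projection formula gives $f_*f^*=d\cdot\id$ on $\NS_\bC(Y)$, so $f^*\colon\NS_\bC(Y)\to\NS_\bC(X)$ is injective, while $g$-equivariance $f\circ g=g\circ f$ yields $g_X^*\circ f^*=f^*\circ g_Y^*$. Hence $f^*(\NS_\bC(Y))$ is a $g_X^*$-invariant subspace on which $g_X^*$ is conjugate to $g_Y^*$, so every eigenvalue of $g_Y^*$ is an eigenvalue of $g_X^*$ and $d_1(g|_Y)\le d_1(g|_X)$. Next I would establish, for any normal projective $n$-fold $V$, any $h\in\Aut(V)$ and any ample class $H$, the formula $\lim_k\big((h^k)^*H\cdot H^{n-1}\big)^{1/k}=\rho\big(h^*|_{\NS_\bC(V)}\big)\eqqcolon\rho$. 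The upper bound is the operator-norm estimate $\|(h^k)^*\|\le C\rho^k k^m$; for the lower bound I invoke Lemma \ref{periodic-lemma-max-ev} to get a nonzero nef class $\eta$ with $h^*\eta=\rho\,\eta$, pick $C$ with $CH-\eta$ ample, apply $(h^k)^*$ (which preserves $\Nef(V)$) and intersect with the nef class $H^{n-1}$ to obtain $(h^k)^*H\cdot H^{n-1}\ge \rho^k(\eta\cdot H^{n-1})/C$, where $\eta\cdot H^{n-1}>0$ since $\eta$ is nef and nonzero and $H$ ample.

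For the reverse inequality I would apply this with $A\coloneqq f^*H_Y$ on $X$, where $H_Y$ is ample on $Y$. Here $A$ is nef but in general not ample, so I first extend the lower bound from ample to nef-and-big classes. Note $A$ is nef and big because $A^n=d\,H_Y^n>0$. By Kodaira's lemma write $A\num \epsilon H_X+N$ with $H_X$ ample, $N$ effective and $\epsilon>0$; expanding $(h^k)^*A\cdot A^{n-1}$ and discarding the terms containing a factor $N$, all of which are nonnegative (an intersection of nef divisor classes meets an effective cycle nonnegatively), leaves $(g_X^k)^*A\cdot A^{n-1}\ge \epsilon^{n}\,(g_X^k)^*H_X\cdot H_X^{n-1}\ge c\,d_1(g|_X)^k$ by the ample case. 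Combining with the projection formula identity $d\cdot\big((g_Y^k)^*H_Y\cdot H_Y^{n-1}\big)=(g_X^k)^*A\cdot A^{n-1}$ (which uses $f^*\circ(g_Y^k)^*=(g_X^k)^*\circ f^*$), the left side grows like $d_1(g|_Y)^k$ while the right side is $\ge c\,d_1(g|_X)^k$, forcing $d_1(g|_X)\le d_1(g|_Y)$. Together with the first step this gives $d_1(g|_X)=d_1(g|_Y)$, and the ``in particular'' clause is then immediate from the equivalence of positive entropy with $d_1>1$.

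The step I expect to be the main obstacle is the nef-and-big lower bound, namely that intersecting against $A=f^*H_Y$ (which fails to be ample precisely along curves contracted by $f$) still detects the full spectral radius of $g_X^*$. This is exactly where the hypotheses that $f$ be surjective and generically finite enter in an essential way: they guarantee $A^n>0$, and it is the bigness of $A$ that allows Kodaira's lemma to slip an ample class underneath it. I would take care to justify that the discarded mixed terms are genuinely nonnegative, using that restrictions of nef classes remain nef and hence have nonnegative top intersection on each prime component of $N$.
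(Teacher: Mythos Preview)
Your argument is correct and takes a different route from the paper. The paper simply cites \cite[Lemma~2.6]{Zhang09-JDG}: take a $g$-equivariant resolution $W\to X\to Y$, use the Lefschetz hyperplane theorem on the smooth $W$ to reduce to the surface case, and conclude $d_1(g|_X)=d_1(g|_W)=d_1(g|_Y)$. You instead work directly on the possibly singular $X$ and $Y$, identifying $d_1$ with the exponential growth rate of $(g^k)^*H\cdot H^{n-1}$ and carrying that growth rate across $f$ by the projection formula, with Kodaira's lemma handling the merely nef-and-big class $A=f^*H_Y$. Your proof is more self-contained (no resolution of singularities, no Lefschetz) at the cost of length; the paper's is shorter once the external reference is granted.

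Two minor points. The nef eigenvector $\eta$ with $h^*\eta=\rho\,\eta$ comes from Birkhoff's generalized Perron--Frobenius theorem applied to the action on the nef cone (as invoked later in the paper in the proof of Lemma~\ref{periodic-lemma-surface}), not from Lemma~\ref{periodic-lemma-max-ev} as you cite. And the ``discard terms containing $N$'' step should be phrased as replacing one factor of $A$ by $\epsilon H_X$ at a time, so that at each stage the discarded quantity is of the form $(\text{nef classes})^{n-1}\cdot N\ge 0$; a naive binomial expansion of $(\epsilon H_X+N)^{n-1}$ produces terms with $N^j$ for $j\ge 2$, which need not represent effective cycles. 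Your closing sentence shows you have the correct justification in mind, but the write-up should match it.
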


\begin{proof}
The proof of \cite[Lemma 2.6]{Zhang09-JDG} also applies to our situation.
Let $W \to X\to Y$ be a $g$-equivariant resolution due to Hironaka \cite{Hironaka77}.
By using the Lefschetz hyperplane theorem (on $W$), we reduce to the surface case.
Then both $d_1(g|_X)$ and $d_1(g|_Y)$ are equal to $d_1(g|_W)$.
\end{proof}

Recall that for a compact K\"ahler manifold $X$,
the {\it first dynamical degree} $d_1(g)$ of a surjective endomorphism $g$ of $X$ is defined as
the spectral radius of the pullback action $g^*$ on $H^{1,1}(X,\bR)$ (cf.~\cite[\S A.2]{NZ09}).
The following lemma asserts that for smooth projective varieties these two definitions of $d_1$
(another one given in the introduction) for endomorphisms or automorphisms coincide.

\begin{lemma} \label{periodic-lemma-max-ev} 
\begin{enumerate}[{\em (1)}]
  \item Let $(X, \omega)$ be a compact K\"ahler manifold of dimension $n$, and $g$ a surjective endomorphism of $X$.
  Let $V$ be a $g$-invariant subspace of $H^{1,1}(X,\bR)$ containing a K\"ahler current $B$.\footnote{A K\"ahler current $B$ is a real $(1,1)$-current such that $B - \epsilon \omega$ is a positive $(1,1)$-current for some $\epsilon > 0$.}
  Then $d_1(g)$ equals the spectral radius $\rho\big(g^*|_V\big)$.
  \item Suppose that $X$ is a smooth projective variety and $g$ is a surjective endomorphism of $X$.
  Then $\rho\big(g^*|_{H^{1,1}(X,\bR)}\big) = \rho\big(g^*|_{\NS_\bR(X)}\big)$.
\end{enumerate}
\end{lemma}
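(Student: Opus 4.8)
The plan is to prove (1) first and then obtain (2) as an immediate corollary. Throughout I use that for a surjective endomorphism the pullback $g^*$ acts on $H^{1,1}(X,\bR)$ with $(g^k)^* = (g^*)^k$, and that $g^*$ preserves both the pseudo-effective cone (the pullback of a closed positive $(1,1)$-current is again closed positive) and the nef cone $\Nef(X)$. Since $V$ is a $g^*$-invariant subspace of $H^{1,1}(X,\bR)$, the eigenvalues of $g^*|_V$ form a subset of those of $g^*|_{H^{1,1}(X,\bR)}$, so $\rho\bigl(g^*|_V\bigr)\le\rho\bigl(g^*|_{H^{1,1}(X,\bR)}\bigr)=d_1(g)$ is automatic. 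The substance of (1) is the reverse inequality $d_1(g)\le\rho\bigl(g^*|_V\bigr)$.

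The mechanism for this reverse inequality is positivity: the only way the full spectral radius on $H^{1,1}$ can be seen on the a priori smaller space $V$ is through the Kähler current $B\in V$. First I would produce, by a Perron--Frobenius (Birkhoff--Vandergraft/Krein--Rutman) argument applied to $g^*$ preserving the closed salient cone $\Nef(X)$ with nonempty interior, a nonzero nef class $\theta$ with $g^*\theta = d_1(g)\,\theta$ (cf.~\cite{DS04}). The Kähler current hypothesis says $B-\epsilon\{\omega\}$ is pseudo-effective for some $\epsilon>0$; since $\{\omega\}$ lies in the interior of $\Nef(X)$ and $\theta$ is nef, the class $\{\omega\}-c\,\theta$ is still nef for small $c>0$, so $B-\epsilon c\,\theta$ is pseudo-effective. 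In other words, the big class $B$ dominates a positive multiple of the eigenclass $\theta$ in the pseudo-effective order.

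Next I would transfer this domination into $V$. Applying $(g^k)^*$ and using $g^*\theta=d_1(g)\,\theta$, the class $(g^k)^*B - \epsilon c\,d_1(g)^k\,\theta = (g^k)^*\bigl(B-\epsilon c\,\theta\bigr)$ is pseudo-effective; pairing it with the nef class $\{\omega\}^{n-1}$ gives $\int_X (g^k)^*B\wedge\omega^{n-1}\ \ge\ \epsilon c\,d_1(g)^k\int_X\theta\wedge\omega^{n-1}$, where $\int_X\theta\wedge\omega^{n-1}>0$ because $\theta$ is a nonzero nef class (Hodge index theorem). On the other hand $(g^k)^*B=(g^*|_V)^k B\in V$, so the left-hand side is at most $C\,\bigl\|(g^*|_V)^k\bigr\|$, which by Gelfand's formula is $\le C'\bigl(\rho(g^*|_V)+\delta\bigr)^k$ for every $\delta>0$ and all large $k$. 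Taking $k$-th roots, letting $k\to\infty$ and then $\delta\to0$, yields $d_1(g)\le\rho\bigl(g^*|_V\bigr)$, which together with the trivial direction proves (1).

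Finally, (2) follows by taking $V=\NS_\bR(X)$, which is a $g^*$-invariant subspace of $H^{1,1}(X,\bR)$; since $X$ is projective it contains an ample class, represented by a Kähler form and hence a Kähler current lying in $V$, so the hypotheses of (1) hold and $\rho\bigl(g^*|_{\NS_\bR(X)}\bigr)=d_1(g)=\rho\bigl(g^*|_{H^{1,1}(X,\bR)}\bigr)$. The step I expect to be the main obstacle is the reverse inequality in (1), and within it the two positivity inputs: producing the nef eigenclass $\theta$ for the top eigenvalue via Perron--Frobenius, and using the big class $B\in V$ to dominate $\theta$ so as to force the growth rate of $(g^*|_V)^k$ up to $d_1(g)$. (An alternative route would cite the Dinh--Sibony characterization $d_1(g)=\lim_k\bigl(\int_X(g^k)^*\omega\wedge\omega^{n-1}\bigr)^{1/k}$ from \cite{DS04,NZ09} in place of the explicit eigenclass, but the argument above keeps the proof self-contained modulo standard cone-positivity facts.)
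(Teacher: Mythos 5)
Your proof is correct, and it takes a genuinely different route from the paper's. The paper's argument also reduces to the inequality $d_1(g)\le\rho\bigl(g^*|_V\bigr)$, but it proceeds by intersecting the psef cone $\cP\subset H^{1,1}(X,\bR)$ with $V$ to get a cone $\cC$, building a class $B'=B_1+\eps B$ that is simultaneously interior to $\cC$ (in $V$) and to $\cP$ (in $H^{1,1}$, thanks to the K\"ahler current $B$), and then invoking the abstract cone lemma \cite[Proposition A.2]{NZ09} twice --- once for the triplet $\bigl(H^{1,1}(X,\bR),\cP,B'\bigr)$ and once for $(V,\cC,B')$ --- to identify both spectral radii with the common limit $\lim_m\chi\bigl((g^m)^*B'\bigr)^{1/m}$. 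You instead make the mechanism explicit: a Perron--Frobenius eigenclass $\theta\in\Nef(X)$ for the top eigenvalue $d_1(g)$, the domination $B\ge\eps c\,\theta$ in the psef order supplied by the K\"ahler current hypothesis, and a direct Gelfand-formula estimate on $\chi\bigl((g^*|_V)^kB\bigr)$. The two arguments rest on the same positivity principle (the K\"ahler current forces $V$ to detect the full growth rate), but yours is self-contained modulo standard cone facts, while the paper's outsources the growth-rate bookkeeping to \cite{NZ09} and avoids constructing the eigenclass. Two small points: the positivity $\int_X\theta\wedge\omega^{n-1}>0$ for a nonzero nef class in dimension $n\ge 3$ is not literally the Hodge index theorem but the statement of \cite[Lemmas A.3 and A.4]{NZ09} (or a Khovanskii--Teissier-type argument), which is exactly what the paper cites for the strict positivity of $\chi$ on $\cP\setminus\{0\}$; and one should note that $g^*$ does preserve $\Nef(X)$ and the psef cone for a surjective endomorphism, which you correctly flag and which is standard.
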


\begin{proof}
(1) It suffices to show that $d_1(g)\le \rho\big(g^*|_V\big)$.
Let $\cP$ be the closed cone in $H^{1,1}(X,\bR)$ consisting of classes of positive closed $(1,1)$-currents, and $\cC\coloneqq\cP\cap V$.
Note that $\cP$ is a strictly convex cone preserved by the pullback action $g^*$, so is $\cC$.
Replacing $V$ by the subspace spanned by $\cC$, we may assume that $V = \cC + (-\cC)$.
Take an interior point $B_1 \in \cC$.
Then $B'\coloneqq B_1+\eps B$ is still contained in the interior of $\cC$ (also in the interior of $\cP$) for sufficiently small $\eps>0$.
We can define a linear form $\chi \colon H^{1,1}(X,\bR)\to \bR$ by $\chi(\xi) = \int_X \xi \abxcup \omega^{n-1}$.
Note that for a non-trivial class $T$ in $\cP$, one has $\chi(T) > 0$ (cf.~\cite[Lemmas A.3 and A.4]{NZ09}).
So by applying \cite[Proposition A.2]{NZ09} to the triplets $\big(H^{1,1}(X,\bR), \cP, B'\big)$ and $(V, \cC, B')$, we obtain the following
$$d_1(g) = \lim_{m\to\infty} \chi\big((g^m)^*B'\big)^{\frac{1}{m}} = \rho\big(g^*|_V\big).$$
Note that in the proof above we have replaced $V$ by a subspace, so we actually prove that $d_1(g)\le \rho\big(g^*|_V\big)$.
This proves the assertion (1).

(2) In this case, $\NS_\bR(X)$ is a $g$-invariant subspace of $H^{1,1}(X,\bR)$ containing an ample divisor, whose first Chern class induces a K\"ahler class.
So the assertion (2) follows from the first one. This proves Lemma \ref{periodic-lemma-max-ev}.
\end{proof}

Consider the following hypotheses. We note that the natural map
$G|_{\NS_\bR(X)} \to G|_{\NS_\bC(X)}$ is an isomorphism, for the comparison
with the same hypothesis in \cite{Zhang16-TAMS}.

\begin{hyp} \label{periodic-Hyp-A}
Let $X$ be a normal projective variety of dimension $n\ge 2$,
and $G\le\Aut(X)$ such that the group $G^*\coloneqq G|_{\NS_\bC(X)}$ induced by the pullback action of $G$ on $\NS_\bC(X)$ is isomorphic to $\bZ^{\oplus n-1}$,
and every element of $G^*\setminus\{\id\}$ is of positive entropy.
\end{hyp}

\begin{hypp} \label{periodic-Hyp-A'}
Let $X$ be a normal projective variety of dimension $n\ge 2$,
and $G\le\Aut(X)$ such that $G|_{\NS_\bC(X)}$ is virtually solvable with maximal dynamical rank $r(G) = n-1$.
\end{hypp}

Obviously, \ref{periodic-Hyp-A} implies \ref{periodic-Hyp-A'}. The converse is also true up to finite-index by the following proposition.

\begin{prop} \label{periodic-prop-finite-null}
Suppose that $(X,G)$ satisfies {\rm \ref{periodic-Hyp-A'}}.
Then, replacing $G$ by a finite-index subgroup,
the null-entropy subset $N(G)$ of $G$ is a (necessarily normal) subgroup of $G$ and
virtually contained in the identity connected component $\Aut^0(X)$ of $\Aut(X)$, i.e.,
$$\big|N(G) : N(G)\cap \Aut^0(X)\big| <\infty.$$
In particular, the pair $(X,G)$ with $G$ replaced by a finite-index subgroup, satisfies {\rm \ref{periodic-Hyp-A}}.
\end{prop}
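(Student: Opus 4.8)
The plan is to transfer everything to the induced linear action $G^* \coloneqq G|_{\NS_\bC(X)}$ and to isolate the single genuinely hard input. First I would invoke the structure theory of virtually solvable groups of maximal dynamical rank (\cite{CWZ14, Dinh12, Zhang09-Invent}): after replacing $G$ by a finite-index subgroup we may assume that $N(G)$ is a normal subgroup with $G/N(G) \isom \bZ^{\oplus n-1}$, the quotient rank being $r(G)=n-1$. Since $d_1(g)=\rho\big(g^*|_{\NS_\bC(X)}\big)$ depends only on $g^*=g|_{\NS_\bC(X)}$, the set $N(G)$ is the preimage of $N(G^*)\coloneqq\{g^*:d_1(g)=1\}$, so it suffices to analyse $N(G^*)$. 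Passing to a further finite-index subgroup, Lie--Kolchin lets me assume $G^*$ is upper triangular; its diagonal entries give characters $\chi_i\colon G^*\to\bC^*$, and $g\mapsto(\log|\chi_i(g)|)_i$ is a homomorphism $\phi\colon G^*\to\bR^{\rho(X)}$. Because the topological entropy satisfies $h(g)=h(g^{-1})$, the condition $d_1(g)=1$ forces $d_1(g^{-1})=1$ as well, i.e.\ \emph{every} eigenvalue of $g^*$ has modulus $1$; hence $\Ker\phi$ is exactly $N(G^*)$. This already exhibits $N(G^*)$, and so $N(G)$, as a normal subgroup, with $G^*/N(G^*)\isom\im\phi$ free abelian of rank $n-1$.

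The crux is to show that $N(G^*)$ is finite. Each $g^*\in N(G^*)$ preserves the lattice $\NS(X)/(\text{torsion})$ and, by the above, has all eigenvalues of modulus $1$; these are algebraic integers whose Galois conjugates lie again among the roots of the (integral) characteristic polynomial, so by Kronecker's theorem they are roots of unity and $g^*$ is quasi-unipotent. The finiteness of $N(G^*)$ at \emph{maximal} rank is the one deep point, and I expect it to be the main obstacle: it is the cohomological form of the Dinh--Sibony theorem quoted in the introduction (for commutative groups, \cite{DS04}) together with its virtually solvable refinement (\cite{Dinh12, Zhang09-Invent}; cf.\ also \cite{DHZ15}), and it rests on the spectral picture of positive-entropy automorphisms (a simple leading eigenvalue $d_1>1$ with a nef eigenclass) combined with the normalization of $N(G^*)$ by the positive-entropy abelian part. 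Purely linear-algebraic data are not enough here; what rules out an infinite quasi-unipotent null part is the interaction with the full-dimensional cone $\Nef(X)$ and the fact that the rank $n-1$ is pinned to $n=\dim X$.

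Granting that $N(G^*)$ is finite, I would now deduce Hyp(A). A virtually solvable subgroup of $\GL_{\rho(X)}(\bZ)$ is virtually polycyclic (Mal'cev), in particular finitely generated, so by Selberg's lemma $G^*$ has a torsion-free finite-index subgroup $G_2^*$. Then $G_2^*\cap N(G^*)$ is finite and torsion-free, hence trivial, so $G_2^*$ embeds as a finite-index subgroup of $G^*/N(G^*)\isom\bZ^{\oplus n-1}$, whence $G_2^*\isom\bZ^{\oplus n-1}$ with every non-identity element of positive entropy. Replacing $G$ by the preimage $G_2$ of $G_2^*$ (still of finite index), the pair $(X,G_2)$ satisfies Hyp(A), and now $N(G_2)=\Ker\big(G_2\to G_2^*\big)$ acts trivially on $\NS_\bC(X)$.

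It remains to see that $N(G)$ is virtually contained in $\Aut^0(X)$. After the above reduction $N(G)$ fixes every ample class, in particular the class of a fixed polarization $H$. The subgroup $\Aut_{[H]}(X)\le\Aut(X)$ of automorphisms fixing the class of $H$ in $\NS(X)$ is of finite type, with identity component $\Aut^0(X)$ (the Lieberman--Fujiki finiteness, valid for normal projective $X$ by boundedness of polarized automorphisms), so $\big[\Aut_{[H]}(X):\Aut^0(X)\big]<\infty$. Since $N(G)\le\Aut_{[H]}(X)$, we conclude $\big|N(G):N(G)\cap\Aut^0(X)\big|<\infty$, which completes the plan.
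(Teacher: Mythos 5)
Your proposal is essentially correct, but it runs the paper's argument in the opposite direction, and the one step you explicitly ``grant'' is precisely the step the paper disposes of by citation. The paper first passes to an $\Aut(X)$-equivariant resolution $\widetilde X\to X$ (using Lemma \ref{periodic-lemma-dyn-degree} to identify $N(G)|_{\widetilde X}$ with $N(G)|_X$), invokes \cite[Theorem 4.1\,(1)]{DHZ15} to get $N(G)$ virtually inside $\Aut^0(\widetilde X)\isom\Aut^0(X)$, and only then deduces that $N(G)|_{\NS_\bC(X)}$ is finite, because $\Aut^0$ acts trivially on the N\'eron--Severi lattice. You instead take finiteness of $N(G^*)$ as the pivot and recover the $\Aut^0$-containment afterwards via the Lieberman--Fujiki finiteness of $\Aut_{[H]}(X)/\Aut^0(X)$; that endgame is a clean and valid substitute for the paper's use of the isomorphism $\Aut^0(\widetilde X)\to\Aut^0(X)$. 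Your preliminary analysis (Lie--Kolchin triangularization, the homomorphism $\phi$, the determinant/Kronecker argument showing every element of $N(G^*)$ is quasi-unipotent) is sound but, as you yourself note, does not by itself yield finiteness: infinite unipotent null parts are not excluded by linear algebra alone. The missing closure is exactly the reduction the paper performs: pass to the equivariant resolution, apply \cite[Theorem 4.1\,(1)]{DHZ15} there (it is a statement about compact K\"ahler manifolds, so the smooth model is needed), and read off finiteness of $N(G^*)$ from the triviality of the $\Aut^0$-action on $\NS$. With that substitution your argument closes; the Selberg-lemma passage to a torsion-free finite-index subgroup then matches the paper's appeal to \cite[Lemma 3.1]{Zhang16-TAMS} for producing $G|_{\NS_\bC(X)}\isom\bZ^{\oplus n-1}$.
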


\begin{proof}
Let $\pi \colon \widetilde X\to X$ be an $\Aut(X)$-equivariant resolution of $X$ (cf.~Hironaka \cite{Hironaka77}).
Replacing $G$ by a finite-index subgroup, we may assume that
$G|_{\NS_\bC(\widetilde{X})}$ is solvable and has connected Zariski-closure in $\GL\big({\NS_\bC(\widetilde{X})}\big)$.
On the other hand, for any $g\in G$, we have $d_1(g|_{\widetilde X}) = d_1(g|_X)$ by Lemma \ref{periodic-lemma-dyn-degree}.
Thus, if we identify $G|_{\widetilde X}$ with $G|_X$, via the natural map $\pi$, then
$$N(G)|_{\widetilde X} = N(G)|_X = N(G|_X) = N(G|_{\widetilde X}),$$
where the second equality holds by definition.
By \cite[Theorem 4.1 (1)]{DHZ15}, we know that $N(G)|_{\widetilde X}$ is virtually contained in $\Aut^0(\widetilde X)$.
Hence $N(G)|_X$ is virtually contained in $\Aut^0(X)$,
since the $\Aut(X)$-equivariant birational morphism ${\widetilde X} \to X$ induces an isomorphism
$\Aut^0({\widetilde X}) \to \Aut^0(X)$.
Therefore, $N(G)|_{\NS_\bC(X)} = N(G)|_{\NS_\bC({\widetilde X})}$ is finite,
since the continuous part $\Aut^0({\widetilde X})$ acts trivially on the lattice $\NS({\widetilde X})$ (modulo torsion),
and hence acts trivially on $\NS_{\bC}({\widetilde X})$.
Now as in \cite[Lemma 3.1]{Zhang16-TAMS}, replacing $G$ by a finite-index subgroup,
we have $G|_{\NS_\bC(\widetilde{X})} \isom G|_{\widetilde X}\big/N(G|_{\widetilde X}) \isom \bZ^{\oplus n-1}$, and also
$G|_{\NS_\bC(X)} \isom \bZ^{\oplus n-1}$.
\end{proof}

Let $X$ be a normal projective variety of dimension $n\ge 2$, and $G\le\Aut(X)$.
Denote the union of all positive-dimensional $G$-periodic proper subvarieties of $X$ by $\Per_+(X,G)$, i.e.,
$$\Per_+(X,G)\coloneqq\bigcup_{Y \textrm{ is } G \textrm{-periodic}} Y,$$
where $Y$ runs over all positive-dimensional $G$-periodic proper subvarieties of $X$.

The result below follows from the equivariance assumption.

\begin{lemma} \label{periodic-lemma-periodic}
Let $f \colon X_1\to X_2$ be a $G$-equivariant generically finite surjective morphism.
Then we have the following relation:
$$\Per_+(X_1,G) = f^{-1}\big(\Per_+(X_2,G)\big),$$
where $f^{-1}$ denotes the set-theoretical inverse.
\qed
\end{lemma}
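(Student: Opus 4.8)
The plan is to establish the set-theoretic equality $\Per_+(X_1,G) = f^{-1}\big(\Per_+(X_2,G)\big)$ by proving the two inclusions separately, exploiting that $f$ is a finite surjective morphism that intertwines the $G$-actions. The key structural facts I would use are: (a) since $f$ is generically finite and surjective between varieties of the same dimension $n$, it has finite fibers over a dense open set and, being proper, it maps closed subsets to closed subsets and takes irreducible subvarieties to irreducible subvarieties; and (b) the $G$-equivariance means $f \circ g = g \circ f$ for every $g \in G$, so $f$ carries $G$-periodic subvarieties to $G$-periodic subvarieties and the set-theoretic preimage $f^{-1}(\cdot)$ commutes with the $G$-action.

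For the inclusion $f^{-1}\big(\Per_+(X_2,G)\big) \subseteq \Per_+(X_1,G)$, I would start with a positive-dimensional $G$-periodic proper subvariety $W \subseteq X_2$ and analyze its preimage $f^{-1}(W)$. Because $f$ is finite and surjective, $f^{-1}(W)$ is a closed subset of $X_1$ of the same dimension as $W$, hence positive-dimensional and proper in $X_1$. By equivariance, if a finite-index subgroup $G_0 \le G$ stabilizes $W$, then $g\big(f^{-1}(W)\big) = f^{-1}\big(g(W)\big) = f^{-1}(W)$ for all $g \in G_0$, so $f^{-1}(W)$ is $G$-periodic. Each irreducible component of $f^{-1}(W)$ is then permuted by $G_0$, so after passing to a further finite-index subgroup each component is itself $G$-periodic and positive-dimensional; hence $f^{-1}(W) \subseteq \Per_+(X_1,G)$. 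Taking the union over all such $W$ gives the inclusion.

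For the reverse inclusion $\Per_+(X_1,G) \subseteq f^{-1}\big(\Per_+(X_2,G)\big)$, I would take a positive-dimensional $G$-periodic proper subvariety $V \subseteq X_1$ and push it forward. Since $f$ is finite, $W \coloneqq f(V)$ is closed and irreducible of the same dimension as $V$, hence positive-dimensional, and it is proper in $X_2$ because $V$ is proper and $f$ is surjective. Equivariance gives that a finite-index subgroup stabilizing $V$ also stabilizes $f(V) = W$, so $W$ is $G$-periodic and thus $W \subseteq \Per_+(X_2,G)$. Consequently $V \subseteq f^{-1}\big(f(V)\big) = f^{-1}(W) \subseteq f^{-1}\big(\Per_+(X_2,G)\big)$, and taking the union over all such $V$ completes the inclusion.

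The main obstacle I anticipate is the careful bookkeeping of finite-index subgroups under the irreducible decomposition: the group $G_0$ stabilizing a subvariety may only permute the irreducible components of a preimage or image rather than fixing each one, so I must shrink to a further finite-index subgroup to conclude that individual components are genuinely $G$-periodic. This is routine but needs the observation that $f$ induces only finitely many components to permute (again using finiteness of $f$), so the relevant stabilizer has finite index; the definition of $G$-periodicity, requiring only a finite-index subgroup to stabilize, makes all of this consistent. No serious geometric difficulty arises beyond verifying that dimensions and properness are preserved in both directions, which follows directly from $f$ being finite and surjective.
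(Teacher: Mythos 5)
The paper gives no written proof of this lemma (it is stated with a closing box and the remark that it ``follows from the equivariance assumption''), so the comparison here is really about whether your argument is sound. It is not, and the gap occurs at exactly the one place where the lemma is delicate. Throughout you treat $f$ as a \emph{finite} morphism, whereas the hypothesis is only that $f$ is \emph{generically} finite: $f$ may contract positive-dimensional subvarieties to points. In the inclusion $\Per_+(X_1,G)\subseteq f^{-1}\big(\Per_+(X_2,G)\big)$ you write ``Since $f$ is finite, $W\coloneqq f(V)$ is closed and irreducible of the same dimension as $V$, hence positive-dimensional.'' This fails precisely when $V$ lies in the exceptional locus of $f$ --- for instance when $V$ is an irreducible component of $\Exc(f)$, which is always positive-dimensional, proper, and $G$-periodic (the exceptional locus is $G$-invariant with finitely many components). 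In that case $f(V)$ is a single point $q$, and nothing in your argument produces a positive-dimensional $G$-periodic proper subvariety of $X_2$ passing through $q$; equivariance alone does not supply one. So the push-forward direction is unproved for exactly the subvarieties that the generically-finite (as opposed to finite) hypothesis allows, and these are the relevant ones in the paper's applications (divisorial contractions, resolutions, Albanese maps), where divisors do get contracted to points.

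The other inclusion also leans on finiteness (``$f^{-1}(W)$ is \dots of the same dimension as $W$''), but there the conclusion you actually need --- that every irreducible component of $f^{-1}(W)$ is positive-dimensional --- does survive for a generically finite surjective morphism between normal projective varieties, so that half is repairable with a correct justification; your bookkeeping of finite-index subgroups permuting components is fine. To close the real gap you must either restrict to the case where $f$ is genuinely finite (where your argument works as written), or supply a separate argument that $f(\Exc(f))\subseteq\Per_+(X_2,G)$ in the setting at hand. What follows formally from equivariance alone is only the weaker identity $\Per_+(X_1,G)=f^{-1}\big(\Per_+(X_2,G)\big)\cup\Exc(f)$, and the stated lemma is exactly the assertion that the second term is absorbed by the first.
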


In the rest of this section, we prove some preliminary results under \ref{periodic-Hyp-A}.
First note that if $X$ is smooth, a {\it quasi-nef sequence} with $1 \le k \le n$ \index{quasi-nef sequence}
$$0\ne L_1 \cdots L_k \, \in \, \overline{L_1 \cdots L_{k-1} \cdot \Nef(X)} \, \subseteq \, H^{k, k}(X, \bR)$$
was constructed in \cite[\S2.7]{Zhang09-Invent}.
Here as in \cite[Lemma 3.4]{Zhang16-TAMS}, we give a generalization of \cite[Theorem 4.3]{DS04} to the singular case.
Besides, we introduce a nef and big $\bR$-Cartier divisor $A$,
which plays an important role in running the Log Minimal Model Program (LMMP for short) with scaling
(cf.~\cite[Corollary 1.4.2]{BCHM10} or \cite[Theorem 1.9 (i)]{Birkar12}).

\begin{lemma} \label{periodic-lemma-big-A}
Suppose that $(X,G)$ satisfies {\rm \ref{periodic-Hyp-A}}.
Then there are nef $\bR$-Cartier divisors $L_i$ for $1\le i\le n$ with $L_1 \cdots L_n\ne 0$, such that for any $g\in G$,
$$g^*L_i \num \exp\chi_i(g)L_i \text{ (numerical equivalence)}$$
for some characters $\chi_i \colon G\to (\bR, +)$, and the group homomorphism
$$\varphi \colon G \to \big(\bR^{\oplus n-1}, +\big), \quad  g \mapsto \big(\chi_1(g), \dots, \chi_{n-1}(g)\big)$$
has image a spanning (discrete) lattice of $\big(\bR^{\oplus n-1}, +\big)$ and
satisfies the following:
\begin{equation}\label{periodic-eqn-iso} \tag{\dag}
\Ker\varphi = N(G), \quad  G^*\isom G/N(G) \xrightarrow{\sim} \im\varphi \isom \bZ^{\oplus n-1}.
\end{equation}
In particular,
$$A\coloneqq\sum_{i=1}^n L_i$$
is a nef and big $\bR$-Cartier divisor.
\end{lemma}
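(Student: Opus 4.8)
The plan is to realize the $L_i$ as common nef eigenvectors of the commuting family $G^* = G|_{\NS_\bC(X)}$, carrying out on the N\'eron--Severi space of the possibly singular $X$ the quasi-nef sequence construction of \cite[\S2.7]{Zhang09-Invent} together with the structure theorem \cite[Theorem 4.3]{DS04}, exactly as in the singular-case precedent \cite[Lemma 3.4]{Zhang16-TAMS}. Put $V := \NS_\bR(X)$ and let $C := \Nef(X) \subseteq V$; since $X$ is projective, $C$ is a closed strictly convex cone with nonempty interior (it contains ample classes), and it is preserved by $g^*$ for every $g \in G$. By \ref{periodic-Hyp-A} the group $G^* \isom \bZ^{\oplus n-1}$ is free abelian of commuting cone-preserving automorphisms, each nontrivial element of which has $d_1(g) > 1$. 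Passing to the generalized joint eigenspaces of the semisimple parts of the $g^*$ and running the quasi-nef sequence, I would extract $n$ characters $\alpha_1, \dots, \alpha_n \colon G \to \bR_{>0}$ and nonzero classes $L_1, \dots, L_n \in C$ with $g^* L_i \num \alpha_i(g) L_i$ and $L_1 \cdots L_n \ne 0$, arranged so that every dynamical degree is recovered as $d_1(g) = \max_i \alpha_i(g)$. Setting $\chi_i := \log \alpha_i$ then gives the required additive characters with $g^* L_i \num \exp\chi_i(g)\, L_i$.

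The one point where singularity intervenes is the top intersection number. To make sense of and compute $L_1 \cdots L_n$ I would fix an $\Aut(X)$-equivariant resolution $\pi \colon \widetilde X \to X$ (Hironaka \cite{Hironaka77}); then $\pi^* \colon \NS_\bR(X) \injmap \NS_\bR(\widetilde X)$ is injective, $G$-equivariant, and sends nef classes to nef classes, so the $\pi^* L_i$ are nef common eigenvectors of $G|_{\widetilde X}$ on the smooth variety $\widetilde X$, with the same characters (the dynamical degrees agree by Lemma \ref{periodic-lemma-dyn-degree}). Since $\pi$ is birational, the projection formula gives $L_1 \cdots L_n = \pi^* L_1 \cdots \pi^* L_n$, and the nonvanishing $L_1 \cdots L_n \ne 0$, together with the spanning-lattice conclusion below, is then the content of \cite[Theorem 4.3]{DS04} applied on $\widetilde X$. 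I expect this positivity --- that the $n$ nef eigenvectors realizing the full dynamical spectrum of a maximal-rank positive-entropy abelian group have nonzero product --- to be the main obstacle, and it is exactly the place where $\dim X = n$ and $r(G) = n-1$ are both used.

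With the $L_i$ and $\chi_i$ in hand, the character bookkeeping is formal. Applying $g^*$ to the nonzero class $L_1 \cdots L_n$ and using that an automorphism preserves the top intersection number (it acts trivially on $H^{2n}$) yields $\prod_i \exp\chi_i(g) = 1$, hence $\sum_{i=1}^n \chi_i = 0$; in particular $\chi_n = -\sum_{i<n}\chi_i$, so the data is governed by $\varphi = (\chi_1, \dots, \chi_{n-1})$. If $g \in \Ker\varphi$ then all $\chi_i(g) = 0$ (using $\sum_i \chi_i = 0$), whence $d_1(g) = \max_i \exp\chi_i(g) = 1$ and likewise $d_1(g^{-1}) = 1$, so $g \in N(G)$; running the computation backwards gives the reverse inclusion, so $\Ker\varphi = N(G)$. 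Thus $\varphi$ factors through an injection $G^* = G/N(G) \isom \bZ^{\oplus n-1} \hookrightarrow \bR^{\oplus n-1}$, which is the isomorphism \eqref{periodic-eqn-iso}; that its image is a spanning discrete lattice is part of the structure transferred from \cite[Theorem 4.3]{DS04}, the spanning being the manifestation of the maximality $r(G) = n-1$.

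Finally, for the concluding assertion, $A = \sum_{i=1}^n L_i$ is nef as a sum of nef $\bR$-Cartier divisors. Expanding $A^n$ into intersection numbers of nef $\bR$-Cartier divisors, every term is $\ge 0$ and the term $L_1 \cdots L_n$ occurs with coefficient $n!$, so $A^n \ge n!\, L_1 \cdots L_n > 0$; since a nef $\bR$-Cartier divisor with positive top self-intersection is big, $A$ is nef and big. The hardest part, as noted, is the existence and nonvanishing of the $n$ nef common eigenvectors on the singular $X$ and the discreteness of $\im\varphi$; once these are granted from the smooth theory via the resolution $\pi$, the remaining relations are routine.
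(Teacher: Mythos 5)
Your proposal follows essentially the same route as the paper: pass to a $G$-equivariant resolution, run the Dinh--Sibony construction on the pullback $\pi^*\Nef(X)$ of the nef cone to obtain the common nef eigenvectors and characters, push forward, and deduce bigness of $A$ from $A^n \ge L_1\cdots L_n > 0$. The only quibble is that the nonvanishing $L_1\cdots L_n \ne 0$ is the content of \cite[Lemma 4.4]{DS04} rather than of \cite[Theorem 4.3]{DS04}, but this is a citation detail and the argument is otherwise the paper's own.
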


\begin{proof}
Let $\pi \colon \widetilde X\to X$ be a $G$-equivariant resolution of $X$ due to Hironaka \cite{Hironaka77}.
We follow the proof of \cite[Theorem 4.3]{DS04},
and consider the action of $G$ on the pullback $\pi^*\Nef(X)$ of the nef cone $\Nef(X)\subset \NS_\bR(X)$
(instead of the K\"ahler cone $\cK(\widetilde X)\subset H^{1,1}(\widetilde X,\bR)$ there).
Then there are nef $\bR$-Cartier divisors $\pi^*L_i$ with $1\le i\le n$ on $\widetilde X$ as common eigenvectors of $G$ acting on $\pi^*\NS_\bR(X)$, i.e., $g^*(\pi^*L_i) \num \exp\chi_i(g)\pi^*L_i$,
such that $\chi_1 + \cdots + \chi_n = 0$ and the induced homomorphism $\varphi$ satisfies \eqref{periodic-eqn-iso}.
By taking a push-forward, these $L_i$ satisfy $g^*L_i \num \exp\chi_i(g)L_i$.
For details, see \cite[Lemma 3.4]{Zhang16-TAMS} or \cite[proof of Theorems 1.2 and 2.2, p.~137]{Zhang13}.

Note that $A$ is nef by its definition.
Then it is big because $$A^n=(L_1+ \cdots +L_n)^n\ge L_1 \cdots L_n>0.$$
The latter inequality follows from \cite[Lemma 4.4]{DS04}.
More precisely, that lemma implies that $L_1 \cdots L_n$ is nonzero and hence positive since these $L_i$ are nef.
\end{proof}

For a nef $\bR$-Cartier divisor $L$ on a projective variety $X$, define the {\it null locus} of $L$ as
\index{null locus of a nef divisor}
$$\Null(L) \coloneqq \bigcup_{L|_Z \textrm{ is not big}}Z,$$
where $Z$ runs over all positive-dimensional proper subvarieties of $X$.
Note that $L|_Z$ is nef, so it is not big if and only if $L^{\dim Z}\cdot Z=0$.

\begin{lemma}[{cf.~\cite[Lemma 3.9]{Zhang16-TAMS}}] \label{periodic-lemma-Null-A}
Suppose that $(X,G)$ satisfies {\rm \ref{periodic-Hyp-A}}.
Then $$\Per_+(X,G) = \Null(A),$$
and it is a Zariski closed proper subset of $X$, where $A$ is constructed in Lemma \ref{periodic-lemma-big-A}.
In particular, $A$ is ample if and only if every $G$-periodic proper subvariety of $X$ is a point.
\end{lemma}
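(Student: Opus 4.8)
The plan is to prove the two inclusions $\Per_+(X,G)\subseteq\Null(A)$ and $\Null(A)\subseteq\Per_+(X,G)$ separately, to treat the Zariski-closedness of $\Null(A)$ as the one substantial step, and to read off the ampleness criterion at the end. Throughout I use the nef $\bR$-Cartier divisors $L_1,\dots,L_n$, the characters $\chi_i$ with $\sum_{i=1}^n\chi_i=0$, and the nef and big divisor $A=\sum_{i=1}^n L_i$ supplied by Lemma \ref{periodic-lemma-big-A}, together with the fact that $\varphi=(\chi_1,\dots,\chi_{n-1})$ has image a spanning lattice in $\bR^{\oplus n-1}$ and that every $\chi_i$ factors through $G^*\isom\bZ^{\oplus n-1}$ (as $\Ker\varphi=N(G)$).

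First I would record the combinatorial fact driving $\Per_+(X,G)\subseteq\Null(A)$. Since $\chi_1,\dots,\chi_{n-1}$ are $\bR$-linearly independent and $\chi_n=-(\chi_1+\cdots+\chi_{n-1})$, the only $\bR$-linear relations among $\chi_1,\dots,\chi_n$ are multiples of $\chi_1+\cdots+\chi_n=0$. Hence for nonnegative integers $a_1,\dots,a_n$ with $1\le\sum_k a_k\le n-1$, the combination $\sum_k a_k\chi_k$ cannot vanish, since vanishing would force all $a_k$ equal, i.e. $\sum_k a_k\in n\bZ$, contradicting $1\le\sum_k a_k\le n-1$. Now take a positive-dimensional $G$-periodic proper subvariety $Z$ and, after passing to a finite-index subgroup $G'$ stabilizing $Z$, set $d=\dim Z\in\{1,\dots,n-1\}$. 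For any multi-index $(i_1,\dots,i_d)$ the projection formula together with $g_*Z=Z$ and $g^*L_{i_j}\num\exp\chi_{i_j}(g)L_{i_j}$ yields
\[
L_{i_1}\cdots L_{i_d}\cdot Z=\exp\Big(\textstyle\sum_{j=1}^d\chi_{i_j}(g)\Big)\,L_{i_1}\cdots L_{i_d}\cdot Z\quad(g\in G').
\]
By the combinatorial fact the functional $\sum_j\chi_{i_j}$ is nonzero, and it stays nonzero on the finite-index (hence full-rank) image of $G'$ in $G^*$; choosing $g\in G'$ with $\sum_j\chi_{i_j}(g)\ne0$ forces $L_{i_1}\cdots L_{i_d}\cdot Z=0$. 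Expanding $A^d=(\sum_i L_i)^d$ then gives $A^d\cdot Z=0$, so $A|_Z$ is not big and $Z\subseteq\Null(A)$.

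For the reverse inclusion I would first observe that $\Null(A)$ is $G$-invariant: for a positive-dimensional $Z$ and $g\in G$ the projection formula gives $A^{\dim Z}\cdot g(Z)=(g^*A)^{\dim Z}\cdot Z$, and since $(g^*A)^{\dim Z}$ expands into the products $L_{i_1}\cdots L_{i_d}$ with strictly positive coefficients $\prod_j\exp\chi_{i_j}(g)$, this intersection number vanishes if and only if $A^{\dim Z}\cdot Z=0$; thus $g(Z)\subseteq\Null(A)$ iff $Z\subseteq\Null(A)$. The crux is then to show that $\Null(A)$ is Zariski closed and proper. For this I would invoke Nakamaye's theorem identifying $\Null(A)$ with the augmented base locus $\mathbf{B}_+(A)$ of the nef and big $\bR$-Cartier divisor $A$; to accommodate the singularities of $X$ I would pull $A$ back along a $G$-equivariant resolution $\pi\colon\widetilde X\to X$, apply Nakamaye on the smooth $\widetilde X$ to the nef and big $\pi^*A$, and descend, exactly as in \cite[Lemma 3.9]{Zhang16-TAMS}. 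Since $A$ is big, $\mathbf{B}_+(A)\ne X$, so $\Null(A)$ is a proper Zariski closed subset with finitely many irreducible components.

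Finally, $G$ permutes these finitely many components, so a finite-index subgroup fixes each of them; as each component is positive-dimensional, every component is $G$-periodic and hence $\Null(A)\subseteq\Per_+(X,G)$, giving equality. The ampleness criterion follows at once: $A$ is ample iff $\mathbf{B}_+(A)=\Null(A)=\emptyset$, i.e. iff $\Per_+(X,G)=\emptyset$, i.e. iff every $G$-periodic proper subvariety is a point. The step I expect to be the main obstacle is the Zariski-closedness of $\Null(A)$: the two inclusions are elementary intersection-theoretic computations once the eigenvalue structure of Lemma \ref{periodic-lemma-big-A} is in hand, whereas closedness rests on the comparison $\Null(A)=\mathbf{B}_+(A)$ (Nakamaye's theorem in the form valid for nef and big $\bR$-Cartier divisors, on a possibly singular variety), which is the genuinely deep input and the point where the resolution must be handled with care.
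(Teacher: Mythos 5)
The paper gives no proof of this lemma beyond the citation to \cite[Lemma 3.9]{Zhang16-TAMS}, and your argument is correct and is essentially the argument of that cited lemma: the eigenvector/character computation for $\Per_+(X,G)\subseteq\Null(A)$, Nakamaye's theorem (in the $\bR$-divisor form, transported through a $G$-equivariant resolution) for the closedness and properness of $\Null(A)$, and the finiteness of the set of irreducible components for the reverse inclusion. The only point to write out carefully is the descent of $\Null(\pi^*A)=\mathbf{B}_+(\pi^*A)$ from $\widetilde X$ to $X$, which is cleanest via $\Per_+(X,G)\subseteq\Null(A)\subseteq\pi\bigl(\Null(\pi^*A)\bigr)=\pi\bigl(\Per_+(\widetilde X,G)\bigr)=\Per_+(X,G)$, using Lemma \ref{periodic-lemma-periodic} and the fact that every component of $\Exc(\pi)$ lies in $\Null(\pi^*A)$.
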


Below is the key proposition in \cite{Zhang16-TAMS} which was used to prove \cite[Theorem 2.4]{Zhang16-TAMS}.
Note that we do {\it not} need the pseudo-effectivity of $K_X + D$ or $\dim X\ge 3$.

\begin{prop}[{cf.~\cite[Proposition 3.11]{Zhang16-TAMS}}] \label{periodic-prop-Zhang-klt}
Suppose that $(X,G)$ satisfies {\rm \ref{periodic-Hyp-A}}.
Assume that for some effective $\bR$-divisor $D$ whose irreducible components are $G$-periodic,
the pair $(X,D)$ has at worst $\bQ$-factorial klt singularities.
Let $A = \sum L_i$ be the nef and big $\bR$-Cartier divisor as in Lemma \ref{periodic-lemma-big-A}.
Then after replacing $G$ by a finite-index subgroup and $A$ by a large multiple, the following assertions hold.
\begin{enumerate}[{\em (1)}]
  \item There is a sequence $\tau_s \circ \cdots \circ \tau_0$ of $G$-equivariant birational maps:
  \begin{equation*}\label{periodic-eqn-seq1} \tag{$\dag$}
  X=X_0 \overset{\tau_0}{\ratmap} X_1 \overset{\tau_1}{\ratmap} \cdots \overset{\tau_{s-1}}{\ratmap} X_s \overset{\tau_s}{\longrightarrow} X_{s+1}=Y
  \end{equation*}
  such that each $\tau_j \colon X_j \ratmap X_{j+1}$ for $0\le j < s$ is either a divisorial contraction of a $(K_{X_j} + D_j)$-negative extremal ray or a $(K_{X_j} + D_j)$-flip;
  the $\tau_s \colon X_s\to X_{s+1}=Y$ is a birational morphism such that $K_{X_s}+D_s=\tau_s^*(K_Y+D_Y)$ is $\bR$-Cartier;
  here $D_i\subset X_i$ for $0\le i\le s+1$ is the push-forward of $D$ and $D_Y\coloneqq D_{s+1}$.
  \item For $0\le i\le s+1$, the push-forward $A_i$ of $A$ on $X_i$ is a nef and big $\bR$-Cartier divisor.
  \item For $0\le i\le s+1$, the pair $(X_i,D_i + A_i)$ and hence the pair $(X_i,D_i)$ have at worst klt singularities;
  $X_j$ is $\bQ$-factorial for $0\le j\le s$.
  \item $K_Y + D_Y + A_Y$ is an ample $\bR$-Cartier divisor, where $A_Y\coloneqq A_{s+1}$.
  \item For $0\le i\le s+1$, the union of all positive-dimensional $G$-periodic proper subvarieties of each $X_i$ is a Zariski closed proper subset of $X_i$.
  Further, $A_i|_Z \num 0$ for every positive-dimensional $G$-periodic proper subvariety $Z$ of $X_i$.
  \item For $0\le i\le s+1$, the induced action of $G$ on each $X_i$ is biregular.
  Further, each $(X_i,G)$ also satisfies {\rm \ref{periodic-Hyp-A}}.
\end{enumerate}
\end{prop}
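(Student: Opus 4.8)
The plan is to run a $G$-equivariant $(K_X+D)$-Minimal Model Program with scaling of the nef and big divisor $A=\sum L_i$ produced in Lemma \ref{periodic-lemma-big-A}, and to identify $Y$ with the ample model of $K_X+D+A$. First I would fix a convenient setup. Since the components of $D$ are $G$-periodic, after replacing $G$ by a finite-index subgroup every component is $g$-invariant for all $g\in G$; hence $D$, and therefore the class $K_X+D$, is $G$-invariant, while $g^*L_i\num\exp\chi_i(g)L_i$ with each character $\chi_i\not\equiv0$ (the first $n-1$ span a lattice and $\chi_n=-\sum_{i<n}\chi_i$). By Lemma \ref{periodic-lemma-Null-A}, $\Null(A)=\Per_+(X,G)$. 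Replacing $A$ by a large multiple and choosing a general effective representative in its $\bR$-linear system, I arrange that $(X,D+A)$ is $\bQ$-factorial klt and that $K_X+D+A$ is nef, so that the scaling is defined.

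Next I would run the $(K_X+D)$-MMP with scaling of $A$. Each step is a divisorial contraction or a flip of a $(K_{X_i}+D_i)$-negative extremal ray, and since $A$ is big, existence and termination follow from \cite{BCHM10} and \cite{Birkar12}, producing the sequence and, at the end, a model $X_s$ on which $K_{X_s}+D_s$ is nef.

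The heart of the proof is to make every contraction $G$-equivariant. The key algebraic fact is that any extremal ray $R$ with finite $G$-orbit is automatically $A$-trivial: passing to a finite-index subgroup so that $g_*R=R$, we get $L_i\cdot R=\exp\chi_i(g)\,L_i\cdot R$ for all $g$, whence $L_i\cdot R=0$ for every $i$ because $\chi_i\not\equiv0$, so $A\cdot R=0$ and $R\subset\Null(A)=\Per_+$. Thus the $G$-invariant contractions are exactly the $A_i$-trivial ones, and their loci are swept out by curves $C$ with $A_i\cdot C=0$, hence lie in $\Per_+(X_i,G)$ and are $G$-periodic. The \textbf{main obstacle} is the converse existence statement: whenever $K_{X_i}+D_i$ is not nef, one must exhibit a $G$-invariant (equivalently $A_i$-trivial) $(K_{X_i}+D_i)$-negative extremal ray to contract, so that the step descends to a biregular $G$-action on $X_{i+1}$; here one cannot simply average over the infinite group $G$, and I expect to invoke the maximal-rank positive-entropy structure exactly as in \cite{Zhang16-TAMS}. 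Granting this, since $A_i\cdot R_i=0$ at every step the push-forward $A_{i+1}$ remains nef and big, giving assertion (2); each $(X_i,D_i+A_i)$ stays klt and $X_j$ ($j\le s$) stays $\bQ$-factorial by standard MMP theory, giving (3); Lemma \ref{periodic-lemma-dyn-degree} together with the construction shows that $(X_i,G)$ again satisfies {\rm \ref{periodic-Hyp-A}} with $G$ acting biregularly, giving (6), while Lemma \ref{periodic-lemma-Null-A} applied to each $X_i$ yields (5).

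Finally I would treat the terminal contraction. On $X_s$ both $K_{X_s}+D_s$ and $A_s$ are nef, with $A_s$ big; applying the base-point-free theorem \cite{KM98} to $N\coloneqq K_{X_s}+D_s+A_s$ (note that $N-(K_{X_s}+D_s)=A_s$ is nef and big and that $(X_s,D_s)$ is klt) shows $N$ is semiample and defines a birational morphism $\tau_s\colon X_s\to Y$. A curve $C$ is $\tau_s$-contracted iff $N\cdot C=0$, and since both summands are nef this forces $(K_{X_s}+D_s)\cdot C=0$ and $A_s\cdot C=0$; hence $\tau_s$ is crepant, $K_{X_s}+D_s=\tau_s^*(K_Y+D_Y)$ and $A_s=\tau_s^*A_Y$ with $K_Y+D_Y+A_Y$ ample, which gives the last map of (1), assertion (4), and the nefness and bigness of $A_Y$. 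The properties (5) and (6) for $Y$ follow as in the intermediate stages. The single hardest point of the whole argument remains the equivariant existence of $A_i$-trivial negative contractions in the third step.
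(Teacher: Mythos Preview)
The paper does not prove this proposition; it quotes it from \cite[Proposition~3.11]{Zhang16-TAMS}. Your sketch is in the right spirit, but it contains a genuine inconsistency about \emph{which} MMP is being run, and two of your intermediate claims are false as stated.

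First, running the $(K_X+D)$-MMP \emph{with scaling of $A$} is not compatible with your own (correct) observation that every contracted ray must be $A$-trivial. In the MMP with scaling of $A$ one contracts, at threshold $t_i>0$, a ray $R$ with $(K_{X_i}+D_i+t_iA_i)\cdot R=0$ and $(K_{X_i}+D_i)\cdot R<0$; this forces $A_i\cdot R>0$, not $A_i\cdot R=0$. Relatedly, your preparatory claim that one can scale $A$ so that $K_X+D+A$ becomes nef is wrong in general: any curve $C\subset\Null(A)=\Per_+(X,G)$ with $(K_X+D)\cdot C<0$ satisfies $(K_X+D+mA)\cdot C=(K_X+D)\cdot C<0$ for every $m$, and such curves are exactly what the equivariant MMP has to contract. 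The program one actually runs in \cite{Zhang16-TAMS} is the $(K_X+D+A)$-MMP (termination comes from \cite{BCHM10} because $A$, hence $K_X+D+A$, is big after scaling); each step is then $(K_X+D+A)$-negative and, since $A$ is nef, automatically $(K_X+D)$-negative, matching the statement. The hard point---which you correctly isolate---is that every such ray has finite $G$-orbit, hence is $A$-trivial by your eigenvalue computation; this uses the finiteness of $(K_X+D+A)$-negative extremal rays (bigness of $A$) together with the fact that $A^\perp\cap\overline{\operatorname{NE}}(X)=\bigcap_i L_i^\perp\cap\overline{\operatorname{NE}}(X)$ is $G$-stable.

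Second, your endgame assumes $K_{X_s}+D_s$ is nef. The paper explicitly stresses (just before the proposition) that pseudo-effectivity of $K_X+D$ is \emph{not} assumed, so the $(K_X+D)$-MMP need not reach a minimal model at all. What is nef on $X_s$ is $K_{X_s}+D_s+A_s$; the birational morphism $\tau_s$ is the contraction to the ample model of this class, and one checks it is crepant for $K+D$ because every $\tau_s$-contracted curve lies in $\Null(A_s)$ and is $(K_{X_s}+D_s+A_s)$-trivial, hence also $(K_{X_s}+D_s)$-trivial. Your use of the base-point-free theorem is fine once you apply it to $K_{X_s}+D_s+A_s$ directly, without the unjustified separate nefness of $K_{X_s}+D_s$.
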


Note that if $(X,D)$ is only a dlt pair, one has the following proposition (but need $K_X+D$ to be pseudo-effective).
The main idea is to apply Proposition \ref{periodic-prop-Zhang-klt} to the klt pair $\big(X,(1-\eps)D\big)$ for some $0<\eps\ll 1$.

\begin{prop}[{cf.~\cite[Proposition 2.6]{Zhang16-TAMS}}] \label{periodic-prop-Zhang-dlt}
Suppose that $(X,G)$ satisfies {\rm \ref{periodic-Hyp-A}}.
Suppose further that for some effective $\bQ$-divisor $D$ whose irreducible components are $G$-periodic,
the pair $(X,D)$ has at worst $\bQ$-factorial dlt singularities, and $K_X+D$ is a pseudo-effective divisor.
Then, after replacing $G$ by a finite-index subgroup, the following assertions hold.
\begin{enumerate}[{\em (1)}]
  \item There is a $G$-equivariant birational map $X\ratmap Y$ which is surjective in codimension-$1$.
  Moreover, the induced action of $G$ on $Y$ is biregular.
  \item The pair $(Y,D_Y)$ has only log canonical singularities and $K_Y + D_Y \sim_\bQ 0$, where $D_Y$ is the push-forward of $D$.
  \item Every $G$-periodic positive-dimensional proper subvariety of $Y$ is contained in the support of $D_Y$.
\end{enumerate}
\end{prop}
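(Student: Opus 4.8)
The plan is to follow the strategy indicated after the statement: perturb the dlt boundary to a klt one, apply Proposition \ref{periodic-prop-Zhang-klt}, and then exploit the pseudo-effectivity of $K_X+D$ together with the eigenvector structure of Lemma \ref{periodic-lemma-big-A}. First I would fix $0<\eps\ll 1$ and pass to the pair $\big(X,(1-\eps)D\big)$. Since $(X,D)$ is $\bQ$-factorial dlt with all coefficients of $D$ at most $1$, lowering them strictly below $1$ makes $\big(X,(1-\eps)D\big)$ a $\bQ$-factorial \emph{klt} pair whose boundary components are still $G$-periodic, so it falls under Proposition \ref{periodic-prop-Zhang-klt}. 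Replacing $G$ by a finite-index subgroup and $A$ by a large multiple, that proposition produces a $G$-equivariant sequence of divisorial contractions and flips $X\ratmap Y$; being an MMP it is surjective in codimension-$1$, and by Proposition \ref{periodic-prop-Zhang-klt}~(6) the induced $G$-action on $Y$ is biregular. This already yields assertion (1). From the same proposition I would also record that $\big(Y,(1-\eps)D_Y\big)$ is klt, that $K_Y+(1-\eps)D_Y+A_Y$ is ample, and that $A_Y|_Z\num 0$ for every positive-dimensional $G$-periodic proper subvariety $Z$ of $Y$, with $\Per_+(Y,G)=\Null(A_Y)$ by Lemma \ref{periodic-lemma-Null-A}.

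The heart of the matter is $K_Y+D_Y\sim_\bQ 0$. Since $K_X+D$ is pseudo-effective and push-forward preserves pseudo-effectivity through divisorial contractions and flips, $K_Y+D_Y$ is pseudo-effective; after enlarging the finite-index subgroup so that $g^*D_Y\sim D_Y$, it is moreover $G$-invariant. Writing $A_Y=\sum_i (L_i)_Y$ with $g^*(L_i)_Y\num \exp\chi_i(g)\,(L_i)_Y$ and $\sum_i\chi_i=0$, and using that $\varphi$ in \eqref{periodic-eqn-iso} has full-rank image, every degree-$(n-1)$ monomial in the $(L_i)_Y$ is a $g^*$-eigenvector with non-trivial character (the relevant character $\sum a_i\chi_i$ vanishes only when all $a_i$ are equal, which is impossible for $\sum a_i=n-1$). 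Hence the $G$-invariance of $K_Y+D_Y$ forces it to be orthogonal to all such monomials; in particular $(K_Y+D_Y)\cdot A_Y^{\,n-1}=0$. As $A_Y$ is nef and big, a standard positivity argument (the divisorial Zariski decomposition with respect to the big class $A_Y$) then shows that $K_Y+D_Y$ is an effective class supported on $\Null(A_Y)=\Per_+(Y,G)$, i.e.\ on $G$-periodic prime divisors.

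The remaining, and hardest, point is to upgrade this to $K_Y+D_Y\num 0$. Here I would feed in the ampleness of $\Theta\coloneqq K_Y+(1-\eps)D_Y+A_Y=(K_Y+D_Y)-\eps D_Y+A_Y$ together with the triviality $A_Y\num 0$ along the periodic locus on which $N\coloneqq K_Y+D_Y$ lives. Intersecting $N$ against suitable powers of $\Theta$ and invoking the Hodge index theorem (negative-definiteness of the $A_Y$-null divisors) should force $N\num 0$; the surface case is instructive, where $N^2=\Theta\cdot N+\eps D_Y\cdot N$ together with $N^2<0$ (negative definiteness of $\Null(A_Y)$) and adjunction along $D_Y$ contradicts $N\neq 0$. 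I expect this numerical-triviality step to be the main obstacle, since merely being pseudo-effective and orthogonal to $A_Y^{\,n-1}$ allows $N$ to be a non-zero effective class on $\Null(A_Y)$, and it is precisely the extra ampleness of $\Theta$ on the $A_Y$-trivial locus that must be leveraged. Once $K_Y+D_Y\num 0$, I would apply the log abundance theorem in the numerically trivial case to get $K_Y+D_Y\sim_\bQ 0$; the log canonicity of $(Y,D_Y)$ follows from the klt-ness of $\big(Y,(1-\eps)D_Y\big)$ since the coefficients of $D_Y$ are at most $1$. This establishes assertion (2).

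Finally, assertion (3) follows cleanly once (2) is known. For a positive-dimensional $G$-periodic proper subvariety $Z\subseteq Y$ we have $A_Y|_Z\num 0$ and $(K_Y+D_Y)|_Z\sim_\bQ 0$, so $\Theta|_Z\num -\eps D_Y|_Z$, which is ample as $\Theta$ is ample. Were $Z\not\subseteq\Supp(D_Y)$, the restriction $D_Y|_Z$ would be an effective divisor on $Z$ and hence non-negative on a general complete-intersection curve $C\subset Z$, giving $-\eps D_Y\cdot C\le 0$ and contradicting the ampleness of $-\eps D_Y|_Z$. Thus every such $Z$ is contained in $\Supp(D_Y)$, which completes the plan.
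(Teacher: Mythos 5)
You should first note that the paper does not actually prove this proposition: it is quoted from \cite[Proposition 2.6]{Zhang16-TAMS}, with only the one-line hint that one should apply Proposition \ref{periodic-prop-Zhang-klt} to the klt pair $\big(X,(1-\eps)D\big)$. Your opening step — perturbing to $\big(X,(1-\eps)D\big)$, running the $G$-MMP, and reading off assertion (1) from parts (1) and (6) of that proposition — is exactly this intended reduction, and your derivation of assertion (3) from assertion (2) together with the ampleness of $K_Y+D_{\eps,Y}+A_Y$ and the vanishing $A_Y|_Z\num 0$ is sound (modulo knowing $D_Y$ is $\bQ$-Cartier, which does follow once (2) is in hand, as in the paper's proof of Theorem \ref{periodic-thm-ThmB'}~(\ref{periodic-thm-ThmB_5})). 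Your character computation showing that every degree-$(n-1)$ monomial in the $L_i$ has non-trivial character, hence is orthogonal to the $G$-invariant class $N\coloneqq K_Y+D_Y$, is also correct.

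The genuine gap is the one you flag yourself: the step $N\num 0$. Your proposal leaves it as an expectation, with only a surface-case Hodge-index heuristic, and the route you suggest (leveraging the ampleness of $\Theta$ on the $A_Y$-trivial locus) is not the mechanism that closes it and does not obviously generalize beyond $n=2$. The missing ingredient is the Dinh--Sibony orthogonality lemma \cite[Lemma 4.4]{DS04}, in the pseudo-effective/singular form developed in \cite{Zhang16-TAMS} (Lemmas 3.4--3.6 there): a pseudo-effective class $T$ with $T\cdot L_1\cdots\widehat{L_i}\cdots L_n=0$ must be proportional to $L_i$. Since your character argument gives this orthogonality for \emph{every} choice of omitted index, $N$ is proportional to $L_i$ for at least two distinct $i$, and these classes are linearly independent (distinct characters), so $N\num 0$ — no appeal to effectivity of $N$, to $\Null(A_Y)$, or to the Hodge index theorem is needed. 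Without this lemma your intermediate claim that $N$ is ``an effective class supported on $\Null(A_Y)$'' is itself unjustified (you would still have to kill the movable part of the divisorial Zariski decomposition), and even granting it, $N$ could a priori be a nonzero combination of $G$-periodic divisors, which Proposition \ref{periodic-prop-PropB} shows are independent of the $L_i$ in $\NS_\bR(Y)$, so no contradiction arises from that direction. The remaining steps you give — lc-ness of $(Y,D_Y)$ by letting $\eps\to 0$ in the discrepancies, and $K_Y+D_Y\sim_\bQ 0$ from $K_Y+D_Y\num 0$ via numerically trivial log abundance — are fine once $N\num 0$ is established.
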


Under \ref{periodic-Hyp-A}, the rank of the N\'eron--Severi group has the following lower bound (see also \cite[Theorem 4.3]{DS04}).

\begin{lemma} \label{periodic-lemma-rho}
Suppose that $(X,G)$ satisfies {\rm \ref{periodic-Hyp-A}}. Then we have:
\begin{enumerate}[{\em (1)}]
  \item The Picard number $\rho(X) \ge n$.
  \item Assume there exists a numerically non-zero $\bR$-Cartier divisor $M$ such that $g^*M\num M$ for any $g\in G$.
  Then $\rho(X)\ge n+1$.
  \item If $\rho(X) = n$ and $K_X$ is $\bQ$-Cartier, then $K_X\num 0$.
\end{enumerate}
\end{lemma}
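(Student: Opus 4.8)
The plan is to produce, from the simultaneous eigenvectors furnished by Lemma \ref{periodic-lemma-big-A}, enough numerically independent divisor classes. Recall from that lemma the nef $\bR$-Cartier divisors $L_1,\dots,L_n$ with $L_1\cdots L_n\ne 0$ and $g^*L_i\num\exp\chi_i(g)\,L_i$ for characters $\chi_i\colon G\to(\bR,+)$; from its proof I also record the normalization $\chi_1+\cdots+\chi_n=0$ (forced by $g^*$ acting as the identity on the top-degree class $L_1\cdots L_n$), while the spanning-lattice property of $\varphi=(\chi_1,\dots,\chi_{n-1})$ says exactly that $\chi_1,\dots,\chi_{n-1}$ are $\bR$-linearly independent in $\Hom(G,\bR)$. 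Since $L_1\cdots L_n\ne0$, each $L_i$ is numerically nonzero. The backbone of all three parts is the elementary fact, applied to different families of classes: common eigenvectors of the commuting operators $\{g^*\}_{g\in G}$ whose characters are pairwise distinct, and which are individually $\num$-nonzero, admit no nontrivial relation modulo numerical equivalence, and in particular are linearly independent in $\NS_\bR(X)$.

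The step where the hypotheses genuinely enter is the verification that $\chi_1,\dots,\chi_n$ are pairwise distinct and all nonzero. Among $\chi_1,\dots,\chi_{n-1}$ this is immediate from their linear independence, since a coincidence $\chi_i=\chi_j$ or a vanishing $\chi_i=0$ would be a nontrivial relation. For the remaining index, the normalization gives $\chi_n=-(\chi_1+\cdots+\chi_{n-1})$, so an identity $\chi_n=0$ or $\chi_n=\chi_i$ with $i\le n-1$ would again produce a nontrivial linear relation among $\chi_1,\dots,\chi_{n-1}$ (with all coefficients in $\{1,2\}$), contradicting independence. Hence the $n$ characters are pairwise distinct and nonzero.

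With distinctness in hand, I would prove the backbone fact by a minimal-support argument: given a nontrivial relation $\sum a_i v_i\num0$ of smallest support, the support has size $\ge2$ since each $v_i\not\num0$; picking $g\in G$ with $\exp\chi_{i_0}(g)\ne\exp\chi_{i_1}(g)$ for two support indices $i_0,i_1$, applying $g^*$ and subtracting $\exp\chi_{i_0}(g)$ times the original relation eliminates $i_0$ while retaining $i_1$, producing a strictly shorter relation — a contradiction. Applying this to $L_1,\dots,L_n$ yields $n$ independent classes, so $\rho(X)=\dim_\bR\NS_\bR(X)\ge n$, which is (1). For (2), the hypothesized $M$ is a $g^*$-eigenvector with the trivial character $0$, which by the previous paragraph is distinct from every $\chi_i$; since $M\not\num0$, the enlarged family $L_1,\dots,L_n,M$ is independent, whence $\rho(X)\ge n+1$. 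Finally (3) follows at once: for any automorphism $g$ one has $g^*K_X\sim K_X$, so a $\bQ$-Cartier $K_X$ is a $g^*$-invariant class (in particular $g^*K_X\num K_X$); were $K_X\not\num0$, part (2) applied with $M=K_X$ would force $\rho(X)\ge n+1$, contradicting $\rho(X)=n$, and therefore $K_X\num0$.

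The only delicate point is the character bookkeeping of the second paragraph — ensuring that the dependent character $\chi_n$, obtained after eliminating one character via $\sum\chi_i=0$, stays distinct from the others and from the trivial character; everything else reduces to the routine eigenvector-independence argument together with the $\Aut(X)$-invariance of the canonical class.
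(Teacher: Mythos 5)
Your proof is correct and follows essentially the same route as the paper's: the paper likewise deduces all three parts from the linear independence in $\NS_\bR(X)$ of the common eigenvectors $L_1,\dots,L_n$ (plus $M$, resp.\ $K_X$) attached to pairwise distinct characters. The paper merely asserts the distinctness of the $\chi_i$ and the eigenvector-independence fact without proof, whereas you supply the details (deriving distinctness and nonvanishing of all $n$ characters from the spanning-lattice property of $\varphi$ together with $\chi_1+\cdots+\chi_n=0$, and proving independence by the minimal-support elimination argument); these fillers are accurate.
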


\begin{proof}
(1) We use the notations as in Lemma \ref{periodic-lemma-big-A}.
Namely, we have $n$ distinct characters $\chi_i$ whose corresponding common eigenvectors are nef $\bR$-Cartier divisors $L_i$, respectively. It then follows that these $L_i$'s are linearly independent in $\NS_\bR(X)$, so $\rho(X)\ge n$.

(2) The assumption is equivalent to say that the $\bR$-Cartier divisor $M$ is a non-zero common eigenvector of $G$ corresponding to the trivial character (i.e., $G\mapsto 0$).
Then the same reason as in the assertion (1) implies that $M$ and all $L_i$'s are linearly independent in $\NS_\bR(X)$.

(3) It follows from the assertion (2) by taking $M = K_X$.
\end{proof}

\begin{prop} \label{periodic-prop-PropB}
Suppose that $(X,G)$ satisfies {\rm \ref{periodic-Hyp-A}} and $X$ has at worst $\bQ$-factorial klt singularities.
Let $B_1, \dots, B_s$ be distinct $G$-periodic prime divisors on $X$.
Then we have:
\begin{enumerate}[{\em (1)}]
  \item If the irregularity $q(X) = 0$, then $B_1, \dots, B_s$ are linearly independent in $\NS_{\bQ}(X)$.
  In particular, they are linearly independent in $\NS_\bR(X)$.
  \item If there is a projective birational morphism $X\to X'$ such that $B_1, \dots, B_s$ are exceptional divisors, then they are linearly independent in $\NS_{\bR}(X)$.\footnote{The linear independence of exceptional divisors is a purely birational geometric property. Actually, we do not need $B_i$ to be $G$-periodic in the proof of the assertion (2).}
  \item If $B_1, \dots, B_s$ are linearly independent in $\NS_{\bR}(X)$, then $s \le \rho(X) - n$.
\end{enumerate}
\end{prop}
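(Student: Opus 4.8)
The plan is to extract from Lemma~\ref{periodic-lemma-big-A} the simultaneous eigenstructure of $G^*$ on $\NS_\bR(X)$ and read off all three assertions from it. After passing to a finite-index subgroup (under which the $B_i$ become $G$-invariant and Lemma~\ref{periodic-lemma-big-A} still applies), each class $[B_i]$ is a common eigenvector of $G^*$ for the trivial character $\chi = 0$, since $g^*B_i \num B_i$ for all $g$. Meanwhile the nef classes $L_1,\dots,L_n$ are common eigenvectors for characters $\chi_1,\dots,\chi_n$ that are pairwise distinct and nontrivial: if some $\chi_i$ were zero, or two of them agreed, then $\im\varphi$ would lie in a proper hyperplane of $\bR^{\oplus n-1}$, contradicting that it is a full-rank lattice together with $\chi_1 + \cdots + \chi_n = 0$. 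This already yields (3): the $L_i$ are linearly independent (distinct characters), the $B_j$ are linearly independent by the hypothesis of (3) and lie in a different eigenspace, and common eigenvectors for the $n+1$ distinct characters $\chi_1,\dots,\chi_n,0$ are jointly independent; hence $n + s \le \rho(X)$, i.e.\ $s \le \rho(X) - n$.

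For (2) I would argue purely birationally, using no dynamics (consistent with the footnote). Suppose $\sum a_i B_i$ is a relation in $\NS_\bR(X)$; since algebraic equivalence implies numerical equivalence, $\sum a_i B_i \num 0$. As the $B_i$ are $f$-exceptional for the projective birational morphism $f \colon X \to X'$ and $X$ is $\bQ$-factorial, the $\bR$-Cartier divisor $\sum a_i B_i$ is $f$-exceptional and numerically trivial; in particular both it and its negative are $f$-nef, and the push-forward $f_*\big(\sum a_i B_i\big) = 0$ is effective. The Negativity Lemma \cite[Lemma~3.39]{KM98} then forces $\sum a_i B_i \ge 0$ and $-\sum a_i B_i \ge 0$, so $\sum a_i B_i = 0$ as a divisor; since the $B_i$ are distinct prime divisors, all $a_i = 0$.

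Part (1) is where the hypothesis $q(X) = 0$ must be used essentially, because distinct prime divisors can be numerically dependent in general. First I record from Lemma~\ref{periodic-lemma-Null-A} that each $B_i \subseteq \Per_+(X,G) = \Null(A)$, and since $B_i$ is a divisor this says $A|_{B_i}$ is not big, i.e.\ $A^{n-1}\cdot B_i = 0$. Now suppose $\sum a_i B_i \num 0$ is a nontrivial relation with $a_i \in \bQ$. Since $q(X) = 0$ gives $\Pic^0(X) = 0$, numerical triviality of a divisor upgrades to $\bQ$-linear triviality, so separating positive and negative coefficients produces nonzero effective $\bQ$-divisors with disjoint supports and $P \sim_\bQ N$. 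The pencil spanned by $P$ and $N$ defines a dominant rational map $\phi \colon X \ratmap \bP^1$ whose general fibre $D_\lambda$ satisfies $D_\lambda \num mP$ for a suitable integer $m > 0$. Because $\Null(A)$ is a proper Zariski-closed subset of $X$, a general point $x$ lies outside it, and the component of $D_\lambda = \phi^{-1}(\phi(x))$ through $x$ is not contained in $\Null(A)$, hence has big $A$-restriction and so strictly positive $A^{n-1}$-degree; thus $A^{n-1}\cdot D_\lambda > 0$. On the other hand $A^{n-1}\cdot D_\lambda = m\sum_{a_i > 0} a_i\, (A^{n-1}\cdot B_i) = 0$, a contradiction. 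Hence no nontrivial relation exists, proving (1) over $\bQ$ and therefore over $\bR$.

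I expect (1) to be the main obstacle. A natural alternative is to invoke the Hodge-index inequality $A^{n-2}\cdot(-)^2 \le 0$ on $A^\perp$ (where all $[B_i]$ sit, as $A^{n-1}\cdot B_i = 0$), together with $A^{n-2}\cdot B_i\cdot B_j \ge 0$ for $i \ne j$; but this form acquires a radical supported precisely along $\Null(A)$, so the argument cannot by itself rule out a dependence among the $B_i$. The role of $q(X) = 0$ is exactly to convert the numerical relation into the covering family $\{D_\lambda\}$, and it is the clash between such a covering family and the nullity $A^{n-1}\cdot B_i = 0$ that closes the argument; this is the linchpin of the proof.
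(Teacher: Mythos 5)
Your proposal is correct, and while parts (2) and (3) follow essentially the paper's own route, part (1) is a genuinely different argument. For (3) you use the same mechanism as the paper (the proof of Lemma \ref{periodic-lemma-rho}(2)): the $L_i$ are common eigenvectors for pairwise distinct nontrivial characters, the $B_j$ sit in the trivial-character eigenspace, and eigenvectors for distinct characters are jointly independent. For (2) you invoke \cite[Lemma 3.39]{KM98} (push-forward zero plus $\pm$-$f$-nef forces the exceptional combination to vanish), whereas the paper uses the covering-curve form of negativity from \cite[Lemma 3.6.2(1)]{BCHM10}; these are two phrasings of the same principle and both are fine. The real divergence is in (1). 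The paper converts the relation $E_1\sim E_2$ into an Iitaka fibration, splits into the cases $\kappa=n$ (where a $G$-stable nef and big class forces null entropy) and $1\le\kappa\le n-1$ (where a nontrivial $G$-equivariant fibration contradicts $r(G)=n-1$ via \cite[Lemma 2.10]{Zhang09-Invent}). You instead take only the base-point-free pencil spanned by $E_1$ and $E_2$, observe that its fibres cover $X$ while $\Null(A)$ is a proper closed subset (Lemma \ref{periodic-lemma-Null-A}), and play the resulting inequality $A^{n-1}\cdot D_\lambda>0$ against $A^{n-1}\cdot E_1=0$. This is more elementary — no case division on $\kappa$, no appeal to the dynamical-rank machinery beyond the construction of $A$ — and it isolates exactly which dynamical input is needed, namely that $A$ is nef and big with $A|_{B_i}\num 0$ and $\Null(A)\ne X$.

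Two small points to tighten. First, the step ``$B_i\subseteq\Null(A)$ and $B_i$ is a divisor, hence $A|_{B_i}$ is not big'' is not a formal consequence of the set-theoretic equality in Lemma \ref{periodic-lemma-Null-A} (an irreducible component of a union need not be one of the members of the union); you should instead quote Proposition \ref{periodic-prop-Zhang-klt}(5), which states directly that $A|_Z\num 0$ for every positive-dimensional $G$-periodic proper subvariety $Z$, giving $A^{n-1}\cdot B_i=0$ at once. Second, you should note explicitly that neither $E_1$ nor $E_2$ can vanish (a nonzero effective divisor has positive degree against $H^{n-1}$ for $H$ ample, so cannot be numerically trivial), since otherwise there is no pencil to speak of; this is the same implicit step the paper takes when it rearranges the relation into $E_1\num E_2$ with positive coefficients on both sides.
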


\begin{proof}
(1) Replacing $G$ by a finite-index subgroup, we may assume that all of $B_i$ have been stabilized by $G$.
Suppose to the contrary that these $B_i$ are linearly dependent in $\NS_{\bQ}(X)$.
Then we have $\sum_{i=1}^s a_iB_i \num 0$ in $\NS_\bQ(X)$ for some $a_i\in \bQ$, not all zero.
After rearranging the order of $B_i$,
we may assume that $E_1\coloneqq \sum_{i=1}^{s_1} a_iB_i \num \sum_{j=s_1+1}^{s_2} b_jB_j \eqqcolon E_2$,
where $a_i, b_j = - a_j$ are positive rational numbers.
Since $q(X)=0$ by assumption, we have $E_1\sim E_2$ (linear equivalence) after replacing $E_i$ by some multiples.
Hence the Iitaka $D$-dimension $\kappa \coloneqq \kappa(X, E_1)\ge 1$.

Replacing $E_1$ by some $mE_1$, we may assume that the map $\Phi_{|E_1|} \colon X\ratmap \bP H^0\big(X,\cO_X(E_1)\big)$ gives rise to
the Iitaka fibration associated to $E_1$, so that its image has dimension equal to $\kappa$.
Take a $G$-equivariant resolution $\pi \colon \widetilde X\to X$ (cf.~Hironaka \cite{Hironaka77}),
such that the linear system $|\pi^*E_1|$ equals $|M|+F$,
where $M$ is base point free, $F$ is the fixed component of $|\pi^*E_1|$, and both of their divisor classes are $G$-stable.
Now the rational map $\Phi_{|E_1|} \colon X\ratmap \bP H^0\big(X,\cO_X(E_1)\big)$ is birational to the $G$-equivariant morphism
$\Phi_{|M|} \colon \widetilde X\to Y\subset \bP H^0\big(\widetilde X,\cO_{\widetilde X}(M)\big)$ with $\dim Y=\kappa$.

If $\kappa=n$, then $M$ is a nef and big divisor.
So by \cite[Lemma 2.23]{Zhang09-JDG}, $G$ is virtually contained in $\Aut^0(\widetilde X)$ and hence is of null entropy on $\widetilde X$,
and also on $X$ (cf.~Lemma \ref{periodic-lemma-dyn-degree}).
This contradicts that the dynamical rank $r(G) = n-1 \ge 1$.
Thus we have $1\le \kappa \le n-1$.
In other words, $\Phi_{|M|}$ is a non-trivial $G$-equivariant fibration with general fibres of dimension $n-\kappa\in \{1, \dots, n-1 \}$.
Then by \cite[Lemma 2.10]{Zhang09-Invent}, the dynamical rank $r(G) \le n-2$, which contradicts \ref{periodic-Hyp-A}.
So we have proved the linear independence of these $B_i$ in $\NS_\bQ(X)$.

The second part of the assertion (1) follows from a linear algebra argument.

(2) Suppose that these $B_i$ are linearly dependent in $\NS_{\bR}(X)$.
Then we have $\sum_{i=1}^s a_iB_i \num 0$ in $\NS_\bR(X)$ for some $a_i\in \bR$, not all zero.
As in the proof of the first assertion, we may assume that $E_1\coloneqq \sum_{i=1}^{s_1} a_iB_i \num \sum_{j=s_1+1}^{s_2} b_jB_j \eqqcolon E_2$,
where $a_i, b_j$ are positive real numbers.
By the negativity of contraction (cf.~\cite[Lemma 3.6.2 (1)]{BCHM10}),
there exists a $B_{i_0}$ for some $1\le i_0\le s_1$ which is covered by curves $\Sigma$ such that $E_1 \cdot \Sigma < 0$.
However, for a general curve $\Sigma$ in the covering family of $B_{i_0}$,
we have $B_j \cdot \Sigma \ge 0$ for any $s_1+1\le j\le s_2$ and hence $E_2 \cdot \Sigma \ge 0$.
This is a contradiction.

(3) We continue using the notations as in Lemmas \ref{periodic-lemma-big-A} and \ref{periodic-lemma-rho}.
By the argument similar to the proof of Lemma \ref{periodic-lemma-rho} (2),
we can show that $L_1,\dots,L_n,B_1,\dots,B_s$ are linearly independent in $\NS_\bR(X)$.
Thus we have $n+s\le \rho(X)$.
This ends the proof of Proposition \ref{periodic-prop-PropB}.
\end{proof}

The following lemma generalizes a fact, which asserts that every effective divisor on an abelian variety is nef.

\begin{lemma} \label{periodic-lemma-eff-on-torus}
Suppose that $\pi \colon T\to X$ is a finite surjective morphism between normal projective varieties.
Suppose further that $T$ satisfies one of the following conditions.
\begin{enumerate}[{\em (i)}]
\item $T$ has at worst klt singularities and contains no rational curve; $K_T\sim_\bQ 0$.
\item $T$ is an abelian variety.
\end{enumerate}
Then we have:
\begin{enumerate}[{\em (1)}]
  \item Every pseudo-effective $\bR$-Cartier divisor on $X$ is nef.
  \item Every big $\bR$-Cartier divisor on $X$ is ample.
\end{enumerate}
\end{lemma}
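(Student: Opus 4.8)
The plan is to push every class up to $T$ via $\pi$, exploit the fact that on $T$ pseudo-effectivity already forces nefness, and then descend the conclusion back to $X$ using the finiteness (hence projectivity and the projection formula) of $\pi$. The key structural input is that under either hypothesis (i) or (ii), $T$ contains no rational curve: for (ii) this is classical, while for (i) it is assumed outright. I would first establish the following claim on $T$ itself: \emph{every pseudo-effective $\bR$-Cartier divisor $N$ on $T$ is nef}. Indeed, if $N$ is not nef, then $N \cdot C < 0$ for some irreducible curve $C$, so $C \subseteq \Supp(N')$ for every effective $\bR$-divisor $N' \num N$. By the Cone Theorem applied on the klt (resp.\ smooth, in case (ii)) variety $T$ with $K_T \sim_\bQ 0$, any extremal ray on which $N$ is negative would have to be $K_T$-trivial, hence a $(K_T + \epsilon H)$-negative ray for small $\epsilon$ and ample $H$; by the rationality and the bend-and-break part of the Cone Theorem such a ray is spanned by the class of a rational curve. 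Since $T$ has no rational curves, $N$ cannot be negative on any curve, i.e.\ $N$ is nef. (For (ii), one may instead argue directly: on an abelian variety every effective divisor is nef because it is numerically equivalent to a translate sweeping out the whole variety, and pseudo-effective classes are limits of effective ones, so the nef cone being closed gives nefness.)

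Next I would descend assertion (1). Let $L$ be a pseudo-effective $\bR$-Cartier divisor on $X$. Since $\pi$ is finite and surjective, $\pi^* L$ is again $\bR$-Cartier, and pullback preserves pseudo-effectivity (the pullback of an effective divisor is effective, and $\pi^*$ is continuous, so it maps the closed pseudo-effective cone into the pseudo-effective cone). By the claim, $\pi^* L$ is nef on $T$. Now for any irreducible curve $C \subset X$, choose an irreducible curve $\widetilde C \subset T$ dominating $C$, say of degree $e = \deg(\widetilde C \to C) \ge 1$. The projection formula gives
$$
L \cdot C \;=\; \frac{1}{e}\,(\pi^* L) \cdot \widetilde C \;\ge\; 0,
$$
because $\pi^* L$ is nef. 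As $C$ was arbitrary, $L$ is nef on $X$, proving~(1).

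For assertion (2), let $L$ be big on $X$. Bigness is also preserved under the finite pullback $\pi^*$ (one has $\vol_T(\pi^* L) = \deg(\pi)\cdot \vol_X(L) > 0$), so $\pi^* L$ is big on $T$; combined with the claim it is nef and big on $T$. I would then invoke the Nakai--Moishezon criterion together with nefness: a nef and big divisor on $T$ has positive top self-intersection and, being nef, has nonnegative intersection with every subvariety; one checks these intersection numbers are in fact positive on all positive-dimensional subvarieties because no rational-curve-covered or $L$-trivial locus can appear — more cleanly, since $L$ itself is already nef on $X$ by part~(1), it suffices to verify $L^{\dim Z}\cdot Z > 0$ for every irreducible $Z \subseteq X$, which follows by pulling $Z$ back to $T$ via the finite map and applying Nakai--Moishezon for the nef and big class $\pi^* L$ there, then dividing by the (positive) generic degree. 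Hence $L$ is ample on $X$.

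The main obstacle I expect is the descent of \emph{ampleness} rather than mere nefness: intersection-theoretic positivity does not transfer across $\pi$ as transparently as nonnegativity, and one must be careful that $\pi$ finite surjective does give $\vol_T(\pi^*L) = \deg(\pi)\cdot \vol_X(L)$ and that the Nakai--Moishezon numbers on $X$ are genuinely positive (not merely nonnegative) on every subvariety. The cleanest route is to reduce~(2) to~(1): once $L$ is known to be nef on $X$, ampleness is equivalent to strict positivity of $L^{\dim Z}\cdot Z$ for all positive-dimensional $Z$, and each such number is a positive rational multiple of the corresponding intersection number for the nef and big class $\pi^* L$ upstairs, where positivity is guaranteed by the standard characterization of nef and big divisors.
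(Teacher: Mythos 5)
Your overall strategy---reduce to $T$ via the finite pullback, prove nefness of pseudo-effective classes on $T$ using the Cone Theorem and the absence of rational curves, then descend---matches the paper's, and your descent steps (projection formula for nefness, volume for bigness) are fine. But the execution has two genuine gaps. First, in your claim on $T$ you apply the Cone Theorem directly to a pseudo-effective $\bR$-Cartier divisor $N$. The Cone Theorem produces rational curves only on $(K_T+\Delta)$-negative extremal rays for a klt pair $(T,\Delta)$; to realize $N$ (up to positive scaling) as such a log canonical divisor you need $N$ to be numerically equivalent to an effective divisor $D$ with $(T,\eps D)$ klt, and a pseudo-effective class need not admit any effective representative. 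Moreover your sentence ``$K_T$-trivial, hence a $(K_T+\eps H)$-negative ray for ample $H$'' is self-contradictory: on a $K_T$-trivial ray $K_T+\eps H$ is strictly positive. The repair is exactly the reduction you carry out only in the abelian case: since the pseudo-effective cone is the closure of the effective cone and $\Nef(T)$ is closed, it suffices to treat an effective Cartier divisor $D$; then $(T,\eps D)$ is klt for $0<\eps\ll 1$, $K_T+\eps D\sim_\bQ \eps D$ is not nef, and the Cone Theorem yields an extremal rational curve, contradicting (i). This is precisely the paper's argument.

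Second, and more seriously, your proof of (2) rests on the assertion that the nef and big class $\pi^*L$ on $T$ satisfies $(\pi^*L)^{\dim Z}\cdot Z>0$ for every positive-dimensional subvariety $Z$ ``by the standard characterization of nef and big divisors.'' That is false: a nef and big divisor can have a nonempty null locus (this is exactly the phenomenon quantified in Lemma \ref{periodic-lemma-Null-A}), and positivity on \emph{all} subvarieties is precisely ampleness, i.e.\ the conclusion you are trying to prove---so the step is circular. The paper instead reduces (2) to (1) by the Kodaira lemma: a big $\bR$-Cartier divisor is the sum of an ample $\bQ$-divisor and an effective $\bR$-divisor, the latter is nef by (1), and ample plus nef is ample. (Alternatively one could show on $T$ that the null locus of a nef and big divisor is covered by rational curves, e.g.\ via the base-point-free theorem and Lemma \ref{periodic-lemma-HM-RCC}, but some such argument is required; Nakai--Moishezon alone does not supply it.)
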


\begin{proof}
Since $\pi$ is finite and by the projection formula,
an $\bR$-Cartier divisor $D$ on $X$ is pseudo-effective, big, nef or ample if and only if so is $\pi^*D$.
Thus we only need to prove this lemma for $X=T$.
Further, we may assume that $T$ satisfies the condition (i) since the condition (ii) implies the condition (i).
By the Kodaira lemma, which states that every big $\bR$-divisor is the sum of an ample $\bQ$-divisor and an effective $\bR$-divisor
(cf.~\cite[Lemma 3.16]{Nakayama04}), it suffices to prove the assertion (1).
Since the cone of all pseudo-effective $\bR$-Cartier divisors on $T$ is
the closure of the cone of all effective $\bR$-Cartier divisors on $T$ in $\NS_\bR(T)$ and the nef cone $\Nef(T)$ is closed,
we only need to show that every effective $\bR$-Cartier divisor on $T$ is nef.
For this, it suffices to show that every effective Cartier divisor on $T$ is nef.
Suppose to the contrary that some effective Cartier divisor $D$ on $T$ is not nef.
By \cite[Corollary 2.35]{KM98}, $(T,\eps D)$ is klt for all sufficiently small rational number $\eps>0$.
Now $K_T + \eps D \sim_\bQ \eps D$ is not nef.
Therefore, applying the cone theorem in MMP to $(T,\eps D)$ (cf.~\cite[Theorem 3.7]{KM98}),
we obtain an extremal rational curve on $T$, which contradicts the condition (i).
This proves Lemma \ref{periodic-lemma-eff-on-torus}.
\end{proof}

The following result proves the implication (1) $\Longrightarrow$ (2) in Theorem \ref{periodic-thm-ThmC}.

\begin{lemma} \label{periodic-lemma-A}
Let $X$ be a quasi-\'etale torus quotient $T/F$ for some abelian variety $T$ and a finite group $F$
acting freely outside a codimension-$2$ subset of $T$, and $G\le\Aut(X)$ such that $(X,G)$ satisfies {\rm \ref{periodic-Hyp-A}}.
Then $X$ has no positive-dimensional $G$-periodic proper subvariety.
\end{lemma}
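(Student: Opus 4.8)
The plan is to chain together the three preceding lemmas. Since $(X,G)$ satisfies \ref{periodic-Hyp-A} by hypothesis, Lemma \ref{periodic-lemma-big-A} produces nef $\bR$-Cartier divisors $L_1, \dots, L_n$ with $L_1 \cdots L_n \ne 0$, so that the divisor $A \coloneqq \sum_{i=1}^n L_i$ is nef and big. This is the canonical big class attached to a maximal-rank positive-entropy action, and by Lemma \ref{periodic-lemma-Null-A} its null locus records precisely the positive-dimensional periodic subvarieties: $\Per_+(X,G) = \Null(A)$.

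The key step is to upgrade $A$ from big to ample using the torus-quotient structure. The quotient morphism $\pi \colon T \to X = T/F$ is finite and surjective between normal projective varieties, and $T$ is an abelian variety, so condition (ii) of Lemma \ref{periodic-lemma-eff-on-torus} holds. That lemma then forces every big $\bR$-Cartier divisor on $X$ to be ample; applying it to the big divisor $A$ shows that $A$ is ample. Now Lemma \ref{periodic-lemma-Null-A} includes the sharpening that $A$ is ample if and only if every $G$-periodic proper subvariety of $X$ is a point. Hence $A$ ample gives $\Null(A) = \emptyset$ (indeed $A|_Z$ is ample, so big, for every positive-dimensional subvariety $Z$), and therefore $\Per_+(X,G) = \emptyset$, which is exactly the claim.

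There is no real computational obstacle here; the content lies entirely in identifying the correct interaction between the two phenomena. On the dynamical side, \ref{periodic-Hyp-A} supplies a nef and big class whose degeneracy locus detects periodicity (Lemmas \ref{periodic-lemma-big-A} and \ref{periodic-lemma-Null-A}); on the geometric side, the finite cover by an abelian variety rigidifies the positivity cones so that bigness already implies ampleness (Lemma \ref{periodic-lemma-eff-on-torus}). I would also note that no finite-index replacement of $G$ is needed, as all three lemmas are stated directly for $(X,G)$ satisfying \ref{periodic-Hyp-A}.
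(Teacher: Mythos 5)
Your proof is correct, and it is a genuine streamlining of the paper's argument. The paper does not work on $X$ directly: it first passes to the minimal split covering $\widetilde T\to X$ in the sense of Beauville (the abelian variety corresponding to the maximal lattice in $\pi_1(X\setminus\Sing X)$), lifts $G$ to a group $\widetilde G$ acting on $\widetilde T$, replaces $\widetilde G$ by a finite-index subgroup so that $(\widetilde T,\widetilde G)$ again satisfies \ref{periodic-Hyp-A}, applies Lemmas \ref{periodic-lemma-big-A} and \ref{periodic-lemma-eff-on-torus} \emph{upstairs} to make $\widetilde A$ ample on $\widetilde T$, and finally descends via the equivariance Lemma \ref{periodic-lemma-periodic}. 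You instead build $A$ on $X$ itself via Lemma \ref{periodic-lemma-big-A} and observe that Lemma \ref{periodic-lemma-eff-on-torus} is already stated for an arbitrary finite surjective morphism $\pi\colon T\to X$ with $T$ abelian — no $G$-equivariance of $\pi$ is needed, since the lemma only transfers positivity of divisor classes through the projection formula. This eliminates the minimal split covering, the lifting of $G$, the finite-index replacement, and the appeal to Lemma \ref{periodic-lemma-periodic}; what the paper's route buys in exchange is an explicit equivariant abelian cover, which is the device reused elsewhere (e.g.\ in \cite{Zhang13}) but is not needed for this statement. Your final step — $A$ ample forces $\Null(A)=\varnothing$, hence $\Per_+(X,G)=\varnothing$ by Lemma \ref{periodic-lemma-Null-A} — matches the paper's conclusion exactly.
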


\begin{proof}
Let $\widetilde T\to X$ be the Galois covering (or minimal split covering in the sense of Beauville; see \cite[\S3]{Beauville83})
corresponding to the unique maximal lattice $L$ in $\pi_1\big(X\setminus \Sing X\big)$ such that $\widetilde T$ is an abelian variety.
Then there exists a group $\widetilde{G}$ (which is the lifting of $G$) acting faithfully on $\widetilde T$, such that $G=\widetilde G/F$.
See also \cite[\S2.15]{Zhang13}.
Note that the action of $G$ on $X$ can be identified with a not necessarily faithful action of $\widetilde{G}$ on $X$ (with finite kernel).
Replacing $\widetilde{G}$ by a finite-index subgroup,
we may assume that the new $\widetilde G$ acts faithfully on both $\widetilde{T}$ and $X$ (cf.~\cite[Lemma 2.4]{Zhang13}),
and both $(\widetilde{T}, \widetilde G)$ and $(X, \widetilde G)$ satisfy \ref{periodic-Hyp-A} (cf.~\cite[Lemma 3.1]{Zhang16-TAMS}).
By Lemma \ref{periodic-lemma-eff-on-torus}, the nef and big $\bR$-Cartier divisor $\widetilde A$ on $\widetilde{T}$ as constructed in Lemma \ref{periodic-lemma-big-A}, is ample.
Hence every $\widetilde G$-periodic proper subvariety of $\widetilde T$ is a point (see Lemma \ref{periodic-lemma-Null-A}).
The same holds for $X$ by Lemma \ref{periodic-lemma-periodic}.
\end{proof}

\section{Some general results from birational geometry}\label{periodic-section-pre2}

In this section, we establish some general results which will be used in the last two sections to prove our main theorems and propositions.
They should be of interest in their own right.

We first quote the following result, which will be frequently used in the sequel of the paper.

\begin{lemma}[{cf.~\cite[Corollary 1.5]{HM07}}] \label{periodic-lemma-HM-RCC}
Let $(X,\Delta)$ be a dlt pair for some effective $\bQ$-divisor $\Delta$
and
$\phi \colon W\to X$ a birational projective morphism.
Denote by $\Exc (\phi)$ the exceptional locus of $\phi$, \index{exceptional locus of a rational map}
i.e., the set of points on $W$ at which $\phi$ is not an isomorphism.
Then we have:
\begin{enumerate}[{\em (1)}]
  \item Every fibre of $\phi$ is rationally chain connected.
  \item Every irreducible component of $\Exc (\phi)$ is uniruled.
  In particular, if $D$ is a non-uniruled prime divisor on $W$, then so is the push-forward of $D$ on $X$.
\end{enumerate}
\end{lemma}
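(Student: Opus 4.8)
The plan is to treat assertion (1) as an instance of the rational connectedness theorem of Hacon--McKernan, and to extract assertion (2) from it by a covering argument combined with the birational invariance of uniruledness.

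For (1), I would first reduce to the case where $W$ is smooth and $\phi$ is a log resolution of $(X,\Delta)$: any further blow-up of $W$ replaces each fibre by a birational modification, and a birational modification of a rationally chain connected variety is again rationally chain connected, so this reduction is harmless. On such a resolution the discrepancy formula for the pair reads
$$K_W + \Gamma = \phi^*(K_X+\Delta) + N, \qquad \Gamma \coloneqq \phi_*^{-1}\Delta + \sum_i E_i, \quad N \coloneqq \sum_i \big(1+a(E_i;X,\Delta)\big)E_i,$$
where the $E_i$ are the $\phi$-exceptional prime divisors and $(W,\Gamma)$ is log smooth, hence dlt. Since $(X,\Delta)$ is dlt and therefore log canonical, every log discrepancy $1+a(E_i;X,\Delta)$ is non-negative, so $N$ is an effective $\phi$-exceptional divisor and $K_W+\Gamma \num_\phi N$. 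This exhibits $\phi\colon (W,\Gamma)\to X$ as a contraction from a log canonical pair whose log canonical divisor is relatively supported on exceptional divisors, which is exactly the situation handled by \cite[Corollary 1.5]{HM07}; the conclusion is that every fibre of $\phi$ is rationally chain connected. The technical heart, which I would quote from \cite{HM07}, is precisely this step: supplying the relative positivity needed to invoke their rational-connectedness criterion.

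For (2), let $S$ be an irreducible component of $\Exc(\phi)$. Since $\phi$ is birational and $S\subseteq\Exc(\phi)$, the restriction $\phi|_S$ cannot be generically finite onto its image, so $\dim\phi(S)<\dim S$ and the general fibre of $\phi|_S\colon S\to\phi(S)$ is positive-dimensional. Each such fibre lies inside a fibre of $\phi$, which is rationally chain connected by (1); using that a positive-dimensional rationally chain connected proper variety carries a nonconstant rational curve through a general point, one concludes that $S$ is swept out by rational curves, hence uniruled. The delicate point here --- the one I expect to be the main obstacle --- is that a positive-dimensional subvariety of a rationally chain connected fibre need not itself be rationally chain connected, so producing rational curves that cover $S$ (rather than merely the ambient fibre) requires the relative form of the statement applied to $S$ itself; this is what \cite[Corollary 1.5]{HM07} packages, and I would rely on it rather than reprove it.

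Finally, for the ``in particular'', let $D$ be a non-uniruled prime divisor on $W$. By the first part of (2), $D$ cannot be a component of $\Exc(\phi)$, so $\phi$ is an isomorphism at the generic point of $D$; hence $\phi|_D\colon D\ratmap\phi(D)$ is birational onto a prime divisor $\phi(D)=\phi_*D$ on $X$. Since uniruledness is a birational property, $\phi_*D$ is non-uniruled, as required.
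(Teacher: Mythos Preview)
Your approach matches the paper's: the paper does not prove this lemma at all but simply quotes it as a consequence of \cite[Corollary 1.5]{HM07}, so your plan to invoke that result for (1) and deduce (2) from it is exactly the intended route. Your reduction in (1) is a bit more elaborate than necessary (the Hacon--M\textsuperscript{c}Kernan corollary already allows an arbitrary proper birational $W\to X$ over a dlt base), and your remark that a blow-up ``replaces each fibre by a birational modification'' is not literally true; but since the image of a rationally chain connected variety is rationally chain connected, the reduction is harmless anyway.

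The one point worth tightening is the gap you yourself flag in (2). You do not need a ``relative form applied to $S$''; the fix is elementary. Since $X$ is normal (being dlt) and $\phi$ is proper birational, Zariski's main theorem gives connected fibres, so for $s\in\Exc(\phi)$ the fibre $\phi^{-1}(\phi(s))$ is positive-dimensional and entirely contained in $\Exc(\phi)$. Now choose $s\in S$ general, so that $s$ lies in no other irreducible component of $\Exc(\phi)$. The fibre through $s$ is rationally chain connected by (1), hence there is an irreducible rational curve $C$ in it passing through $s$; as $C\subset\Exc(\phi)$ is irreducible and meets the open set $S\setminus\bigcup_{j\ne i}S_j$, it must lie in $S$. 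Thus a general point of $S$ lies on a rational curve in $S$, and $S$ is uniruled. Your ``in particular'' paragraph is then correct as written.
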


The proposition below gives an affirmative answer to Question \hyperref[periodic-qestion-QnA]{\ref{periodic-qestion-QnA} (2)}.

\begin{prop} \label{periodic-prop-PropA}
Suppose that $(X,G)$ satisfies {\rm \ref{periodic-Hyp-A}}.
Suppose further that $X$ is $G$-equivariant birational to a quasi-\'etale torus quotient. Then we have:
\begin{enumerate}[{\em (1)}]
  \item Every connected component $Z_k$ of $\Per_+(X,G)$ (i.e., the union of all positive-dimensional $G$-periodic proper subvarieties of $X$) is rationally chain connected.
  \item Every irreducible component of $\Per_+(X,G)$ is uniruled. In particular, Question \hyperref[periodic-qestion-QnA]{\ref{periodic-qestion-QnA} (2)} has a positive answer.
\end{enumerate}
\end{prop}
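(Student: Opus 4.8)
The plan is to compare $X$ with the quasi-\'etale torus quotient $X'$ through a common $G$-equivariant resolution, and to exploit that $X'$ carries no positive-dimensional $G$-periodic proper subvariety. First I would set up notation: after replacing $G$ by a finite-index subgroup (which alters neither $\Per_+(X,G)$ nor the validity of \ref{periodic-Hyp-A}), I may assume every relevant periodic subvariety is $G$-invariant and that $(X',G)$ again satisfies \ref{periodic-Hyp-A}, the dynamical-rank condition being birational in nature (cf.~Proposition \ref{periodic-prop-finite-null}). Lemma \ref{periodic-lemma-A} then gives $\Per_+(X',G)=\varnothing$, and since $X'$ is klt the pair $(X',0)$ is dlt. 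Writing $\phi\colon X\ratmap X'$ for the given $G$-equivariant birational map, I would resolve it equivariantly (Hironaka): a normal projective $W$ on which $G$ acts biregularly, with $G$-equivariant birational projective morphisms $p\colon W\to X$ and $q\colon W\to X'$ satisfying $q=\phi\circ p$.

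The key reduction is that $q$ contracts the entire periodic locus of $X$ to points. Let $Z$ be a connected component of $\Per_+(X,G)$, which is $G$-invariant. As $Z\subsetneq X$ is proper and $p$ is surjective, $p^{-1}(Z)\subsetneq W$ is a proper closed $G$-invariant subset, so each of its finitely many irreducible components $E$ is $G$-periodic with $\dim q(E)\le\dim E\le n-1$; hence $q(E)$ is a $G$-periodic proper subvariety of $X'$, and by $\Per_+(X',G)=\varnothing$ it cannot be positive-dimensional, so $q(E)$ is a point. Thus $q\big(p^{-1}(Z)\big)$ is a finite set of $G$-periodic points $s_1,\dots,s_l\in X'$. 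Now $p$ has connected fibres ($p$ birational projective, $X$ normal), so $p^{-1}(Z)$ is connected, and being contained in the disjoint union $\bigsqcup_i q^{-1}(s_i)$ of clopen fibres it lies in a single fibre $q^{-1}(s_Z)$. This gives $Z\subseteq p\big(q^{-1}(s_Z)\big)$; conversely $q^{-1}(s_Z)$ is connected and $G$-invariant, so $p\big(q^{-1}(s_Z)\big)$ is connected, $G$-invariant, meets $Z$, and its positive-dimensional components lie in $\Per_+(X,G)$ and hence in $Z$. I would conclude the clean equality $Z=p\big(q^{-1}(s_Z)\big)$.

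Assertion (1) then follows immediately: by Lemma \ref{periodic-lemma-HM-RCC}(1) applied to the birational projective morphism $q\colon W\to X'$ with $(X',0)$ dlt, the fibre $q^{-1}(s_Z)$ is rationally chain connected, and $Z$ is its image under the morphism $p$; since rational chain connectedness passes to images under surjective morphisms (images of rational curves are rational curves or points), $Z$ is rationally chain connected. For assertion (2) I would prove the separate elementary fact that \emph{every positive-dimensional irreducible component $Y$ of a rationally chain connected variety is uniruled}, and apply it to the components of $Z$: taking $y\in Y$ very general so that $y\notin Y\cap Y''$ for every other component $Y''$ of $Z$, rational chain connectedness joins $y$ to another very general point, so $y$ lies on a non-constant rational curve $C\subseteq Z$; as $C$ is irreducible it sits in one component, which must be $Y$ (otherwise $y\in Y\cap Y''$ has dimension $<\dim Y$, contradicting genericity), whence $C\subseteq Y$ and $Y$ is uniruled. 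In particular no $G$-periodic prime divisor of $X$ is non-uniruled, which is exactly Question \hyperref[periodic-qestion-QnA]{\ref{periodic-qestion-QnA} (2)}.

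The step demanding the most care is the passage from the obvious inclusion $Z\subseteq p\big(q^{-1}(s_Z)\big)$ to the equality, since it is this equality that lets me transport rational chain connectedness from a \emph{single} fibre of $q$ down to the whole connected component $Z$; the decisive inputs are the connectedness of $p^{-1}(Z)$ and the finiteness and disjointness of the image fibres, which together force a single point $s_Z$. I also note that a naive appeal to Lemma \ref{periodic-lemma-periodic} is not available here, because $p$ and $q$ are merely birational and contract positive-dimensional periodic loci to points; this is why I argue the contraction directly via Lemma \ref{periodic-lemma-A}. Once (1) is in place, the uniruledness of components in (2) is purely formal through the very-general-point argument, so no further geometric obstruction remains.
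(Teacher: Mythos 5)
Your argument is correct and is essentially the paper's own proof: both resolve the indeterminacy of $X\ratmap X'$ equivariantly, use Lemma \ref{periodic-lemma-A} to force every $G$-periodic subvariety to contract to a point of the torus quotient, identify each connected component of $\Per_+(X,G)$ with the image of a single fibre via Zariski's main theorem, and conclude with Lemma \ref{periodic-lemma-HM-RCC} plus the fact that images of rationally chain connected sets are rationally chain connected. Your only additions are to make explicit the equality $Z=p\big(q^{-1}(s_Z)\big)$ and the very-general-point deduction of (2) from (1), both of which the paper leaves implicit ("follows readily"), and both of which are sound.
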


\begin{proof}
Since the assertion (2) follows readily from the first one, we prove only the assertion (1).
Suppose that $X$ is $G$-equivariant birational to a quasi-\'etale torus quotient $Y\coloneqq T/F$ for some abelian variety $T$ and a finite group $F$ (note that $Y$ is klt).
Since the image of a rationally chain connected Zariski closed set is still rationally chain connected,
we may replace $X\ratmap Y$ by a $G$-equivariant resolution of indeterminacy (cf.~Hironaka \cite{Hironaka77}),
and assume that $X\to Y$ is already a $G$-equivariant birational morphism
(see Lemma \ref{periodic-lemma-periodic} and \cite[Lemma 3.1]{Zhang16-TAMS}).
Note that the image of $Z_k$ on $Y$ is $G$-periodic and hence a point $P$ by Lemma \ref{periodic-lemma-A}.
By Zariski's main theorem, the inverse image on $X$ of the point $P$ on the normal variety $Y$ is connected.
This inverse of $P$ is also $G$-periodic and contains $Z_k$, so it equals $Z_k$, since $Z_k$ is a connected component of $\Per_+(X,G)$.
Then by Lemma \ref{periodic-lemma-HM-RCC}, $Z_k$ is rationally chain connected.
\end{proof}

Below is an easy fact whose proof is left to the reader.

\begin{lemma}[{cf.~\cite[Exercise 2.1]{HK10}}] \label{periodic-lemma-Q-Cartier}
Let $X$ be a normal projective variety and $D$ a Weil $\bQ$-divisor.
If $D$ is $\bR$-Cartier, then it is $\bQ$-Cartier.
\end{lemma}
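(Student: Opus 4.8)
The plan is to unwind the two definitions and reduce the statement to an elementary piece of rationality descent in linear algebra. Recall from the Notation that $D$ being $\bR$-Cartier means there exist finitely many integral Cartier divisors $C_1, \dots, C_l$ and real numbers $r_1, \dots, r_l$ with $D = \sum_{j=1}^l r_j C_j$, an equality of Weil divisors with real coefficients (using that on the normal variety $X$ every integral Cartier divisor is canonically a Weil divisor). On the other hand, $D$ being $\bQ$-Cartier, i.e.\ $sD$ Cartier for some integer $s \ge 1$, is equivalent to $D$ being a $\bQ$-linear combination of integral Cartier divisors: clearing denominators gives one implication, while taking the single Cartier divisor $sD$ gives the other. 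So the whole task is to upgrade the real coefficients $r_j$ to rational ones.

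First I would pass to a finite-dimensional setting. Let $P_1, \dots, P_N$ be the finite set of all prime divisors occurring either in the given expression $D = \sum_i a_i D_i$ (with $a_i \in \bQ$) or in any of the Weil divisors $C_1, \dots, C_l$. I work inside the $\bQ$-vector space $V \coloneqq \bigoplus_{t=1}^N \bQ\, P_t \cong \bQ^N$, with real extension $V_\bR \coloneqq V \otimes_\bQ \bR \cong \bR^N$. Then $D$ and each $C_j$ define vectors in $V$; in particular $D \in V$ because its coefficients are rational, while the relation $D = \sum_j r_j C_j$ exhibits $D$ as an element of $V_\bR$ lying in the real span $W_\bR$ of $C_1, \dots, C_l$, where $W \coloneqq \operatorname{span}_\bQ(C_1, \dots, C_l) \subseteq V$.

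The key step is then the standard rationality fact $W_\bR \cap V = W$ for a $\bQ$-subspace $W \subseteq V$: choosing integer linear equations that cut out $W$ inside $V$, the same equations cut out $W_\bR$ inside $V_\bR$, and a vector with rational entries which solves them over $\bR$ already solves them over $\bQ$. Since $D \in V$ and $D \in W_\bR$, this forces $D \in W$, i.e.\ $D = \sum_j q_j C_j$ for some $q_j \in \bQ$. Hence $D$ is a $\bQ$-linear combination of integral Cartier divisors, and therefore $\bQ$-Cartier by the equivalence noted above.

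There is no genuine obstacle here, which is precisely why the fact is asserted to be easy: the only point requiring care is the correct reading of ``$\bR$-Cartier'', namely that the defining relation is an identity of Weil divisors taking place in the finitely generated rational vector space $V$, so that the descent argument applies verbatim. If instead one only knew an equality inside $\operatorname{Cart}(X) \otimes_\bZ \bR$ abstractly, one would first map it to $V_\bR$ via the (injective on integral classes) inclusion of Cartier divisors into Weil divisors; this is harmless and I would mention it only briefly.
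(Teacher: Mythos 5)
Your argument is correct. Note that the paper does not actually supply a proof of this lemma --- it is stated as ``an easy fact whose proof is left to the reader,'' with a pointer to \cite[Exercise 2.1]{HK10} --- and what you have written is precisely the intended rationality-descent argument: the relation $D=\sum_j r_jC_j$ is a linear system with rational (indeed integral, on the $C_j$ side) coefficients in the unknowns $r_j$, so the existence of a real solution forces the existence of a rational one, whence $sD$ is Cartier for a suitable $s\ge 1$. Your care in identifying where the identity $D=\sum_j r_jC_j$ lives (as Weil divisors in the finite-dimensional rational span of the relevant primes, via the injection of Cartier into Weil divisors on the normal variety $X$) is exactly the one point worth spelling out.
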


It is well-known that the birational automorphism group of a projective variety of general type is finite (cf.~\cite[Theorem 14.10]{Ueno75}).
Below is a similar result.

\begin{lemma} \label{periodic-lemma-finite-auto}
Let $X$ be a non-uniruled normal projective variety, and $G\le \Aut(X)$ such that the linear equivalence class of an ample divisor $H$ is $G$-periodic.
Then $G$ is finite.
\end{lemma}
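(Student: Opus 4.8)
The plan is to reduce, after passing to a finite-index subgroup, to the case where $g^*H \sim H$ (genuine linear equivalence) for every $g \in G$, and then to locate $G$ inside the connected automorphism group and exploit non-uniruledness. First I note that it suffices to prove that some finite-index subgroup of $G$ is finite; so by the periodicity hypothesis I replace $G$ by the finite-index subgroup stabilizing the class of $H$ in $\Pic(X)$, so that $g^*H \sim H$ for all $g \in G$. In particular $g^*[H] = [H]$ in $\NS(X)$ for all $g \in G$.

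Next I would invoke the classical boundedness of automorphisms preserving a polarization: the subgroup $\Aut_{[H]}(X) \le \Aut(X)$ of automorphisms fixing the numerical class of $H$ has finitely many connected components (the graphs of such automorphisms have bounded degree with respect to $H \boxplus H$ on $X \times X$, hence lie in finitely many components of a Hilbert scheme), with identity component $\Aut^0(X)$. Thus $G \cap \Aut^0(X)$ has finite index in $G$, and I may assume $G \le \Aut^0(X)$. Since $X$ is non-uniruled, $\Aut^0(X)$ is an abelian variety $A$: by Chevalley's structure theorem $\Aut^0(X)$ is an extension of an abelian variety by a connected linear algebraic group $L$, and if $\dim L > 0$ then $L$ contains a copy of $\mathbb{G}_a$ or $\mathbb{G}_m$ whose general orbits are rational curves sweeping out $X$, forcing $X$ to be uniruled; hence $L$ is trivial and $\Aut^0(X) = A$.

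I would then study the morphism $\phi \colon A \to \Pic^0(X)$, $a \mapsto [a^*H - H]$. This lands in $\Pic^0(X)$ because the connected group $\Aut^0(X)$ acts trivially on the discrete lattice $\NS(X)$; it is a morphism of abelian varieties by the seesaw principle applied to the difference family on $A \times X$; and it is a group homomorphism because the connected group $A$ acts trivially on the abelian variety $\Pic^0(X)$ (the induced map $A \to \Aut(\Pic^0 X)$ is a morphism into a discrete group, hence constant). Our hypothesis gives $G \le \Ker\phi$, so it remains to show $\Ker\phi$ is finite. If not, let $A' \coloneqq (\Ker\phi)^0_{\mathrm{red}}$, a positive-dimensional abelian subvariety with $a^*H \sim H$ for all $a \in A'$. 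Fixing a general point $x \in X$, the orbit map $o_x \colon A' \to X$ satisfies $o_x \circ L_b = \sigma_b \circ o_x$, where $L_b$ is left translation by $b$ and $\sigma_b$ is the automorphism induced by $b$; hence $L_b^*(o_x^*H) \sim o_x^*H$ for every $b \in A'$, so $o_x^*H$ is a translation-invariant class on $A'$ and therefore numerically trivial. But the general orbit is positive-dimensional (a faithful positive-dimensional action cannot have dense fixed locus), so $o_x$ is non-constant and $o_x^*H \cdot C = H \cdot (o_x)_*C > 0$ for a suitable curve $C \subset A'$, a contradiction. Hence $\Ker\phi$, and therefore $G$, is finite.

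The main obstacle is the last step: turning ``$H$ is ample and its class is fixed by a positive-dimensional subgroup of $A$'' into a contradiction. The key device is the equivariance $o_x \circ L_b = \sigma_b \circ o_x$ of the orbit map, which converts the fixed-polarization condition into translation-invariance of $o_x^*H$ on the abelian variety $A'$, so that the ampleness of $H$ collides with the numerical triviality of translation-invariant classes. The two structural inputs (finiteness of the number of components of $\Aut_{[H]}(X)$, and non-uniruledness forcing $\Aut^0(X)$ to be an abelian variety) are classical, and I would cite them rather than reprove them; the heart of the argument is the orbit computation.
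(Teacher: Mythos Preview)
Your proof is correct but takes a genuinely different route from the paper. The paper's argument is much shorter: after replacing $H$ by a multiple so that $|H|$ is very ample and replacing $G$ by a finite-index subgroup fixing the class of $H$, the embedding $X \hookrightarrow \bP^N = \bP H^0(X,\cO_X(H))$ is $G$-equivariant, so $G$ lands inside the \emph{linear} algebraic group $\Aut(\bP^N,X)$. If $G$ were infinite then this group would have positive dimension, hence contain a copy of $\bG_a$ or $\bG_m$, whose general orbit is a rational curve in $X$ --- contradicting non-uniruledness directly. The paper therefore never needs Chevalley's theorem, the abelian-variety structure of $\Aut^0(X)$, or the $\Pic^0$ computation; the ``rational curves from one-parameter subgroups'' step that you use only to kill the linear part of $\Aut^0(X)$ already finishes the proof once one has a linear group in hand. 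Your approach, by contrast, is more structural: it identifies $\Aut^0(X)$ as an abelian variety and then shows via the homomorphism $a \mapsto [a^*H - H]$ that the polarization-fixing locus is finite. This is a perfectly valid argument and makes the role of the polarization more transparent, but it is longer and invokes heavier machinery (Chevalley, translation-invariance of classes on abelian varieties) where the paper gets by with the single observation that closed subgroups of $\PGL_{N+1}$ are linear.
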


\begin{proof}
Replacing $H$ by a large multiple, we may assume that $H$ is very ample and hence
the complete linear system $|H|$ defines a closed embedding from $X$ into some projective space $\bP H^0\big(X,\cO_X(H)\big)\isom \bP^N$.
Identify $X$ with its image.
Replacing $G$ by a finite-index subgroup, we may assume that $G$ itself stabilizes the linear equivalence class of $H$.
Thus the above embedding is $G$-equivariant.
So $G$ is contained in $\Aut(\bP^N, X)$, the Zariski closed subgroup of $\Aut(\bP^N)$ stabilizing $X$.
If $G$ were infinite, then the linear algebraic group $\Aut(\bP^N, X)$ contains the $1$-dimensional linear algebraic group $\bG_a$ or $\bG_m$,
whose orbit of a general point is a rational curve.
But our $X$ is non-uniruled. This is a contradiction.
Hence $G$ is finite.
\end{proof}

We give a criterion for the log canonical divisor $K_X+D$ to be pseudo-effective.
See \cite[Theorem 1.4 or 3.7]{LZ15} for a more general form.

\begin{lemma} \label{periodic-lemma-ps}
Let $X$ be a rationally connected normal projective variety,
and $D$ a non-uniruled prime divisor such that $K_X+D$ is $\bQ$-Cartier.
Then $K_X+D$ is pseudo-effective.
\end{lemma}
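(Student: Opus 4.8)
The statement to prove is: if $X$ is rationally connected, $D$ is a non-uniruled prime divisor, and $K_X + D$ is $\bQ$-Cartier, then $K_X + D$ is pseudo-effective. My plan is to argue by contradiction, assuming $K_X + D$ is \emph{not} pseudo-effective, and to derive that $D$ must be uniruled, contradicting the hypothesis. The main tool will be a uniruledness criterion of the Boucksom--Demailly--P\u{a}un--Peternell type (as invoked in Example \ref{periodic-ex-rat-Q-ab} via \cite[Corollary 0.3]{BDPP13}), which says that a smooth projective variety has non-pseudo-effective canonical class if and only if it is uniruled; the refined relative/adjoint versions of this principle are what allow one to transfer non-pseudo-effectivity of $K_X + D$ into uniruledness of the divisor $D$.

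First I would pass to a suitable model. Since uniruledness and non-uniruledness are birational properties (as noted in the footnote to Theorem \ref{periodic-thm-ThmA'}) and pseudo-effectivity of $K_X + D$ can be read off on a log resolution via discrepancies, I would take a log resolution $\phi \colon W \to X$ on which the strict transform $D_W$ of $D$ is smooth and the total transform is simple normal crossings. By Lemma \ref{periodic-lemma-HM-RCC}(2), the non-uniruledness of $D$ is inherited by $D_W$ (the push-forward of a non-uniruled prime divisor stays non-uniruled, and conversely along a birational morphism the exceptional components are uniruled, so the strict transform of a non-uniruled divisor is non-uniruled). The goal after this reduction is to relate $K_W + D_W$ to the pullback of $K_X + D$ and argue on $W$.

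The heart of the argument is an \emph{adjunction plus positivity} step. On the smooth model, write $K_W + D_W$ and restrict the adjunction formula $(K_W + D_W)|_{D_W} \sim K_{D_W}$ to the divisor $D_W$. If $K_X + D$ were not pseudo-effective, then (modulo the discrepancy corrections from $\phi$, which one controls because $X$ has the singularities forced by $K_X + D$ being $\bQ$-Cartier) $K_W + D_W$ fails to be pseudo-effective. The key is then to conclude that $D_W$ itself is uniruled: the strategy is to produce a covering family of rational curves on $D_W$ by applying a BDPP-type criterion either to $D_W$ directly (if one can show $K_{D_W}$ is non-pseudo-effective on $D_W$) or, more robustly, by running an argument on $W$ showing that the non-pseudo-effective class $K_W + D_W$ forces rational curves through general points of $D_W$ via the restriction of the MRC/fibration structure. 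This is exactly the type of statement recorded in \cite[Theorem 1.4 or 3.7]{LZ15}, cited in the hypothesis, so I would invoke that more general result as the engine.

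\textbf{The main obstacle.} The delicate point is the passage from non-pseudo-effectivity of the adjoint class $K_X + D$ to uniruledness of $D$ (rather than of $X$), since the naive BDPP criterion only detects uniruledness of the ambient variety from its own canonical class. One cannot simply restrict, because $(K_X+D)|_D \sim_\bQ K_D$ only tells us about $K_D$, and $K_D$ non-pseudo-effective would give uniruledness of $D$ — but controlling the sign of $(K_X+D)|_D$ from that of $K_X+D$ requires positivity of $-(K_X+D)$ in directions transverse to $D$, which is not automatic. The honest resolution is to use the rational connectedness of $X$ to build an auxiliary fibration or a family of curves sweeping out $X$, intersect it with $D$, and apply the adjoint uniruledness criterion of \cite[Theorem 1.4 or 3.7]{LZ15}; I expect that citing this stronger criterion, after the birational reduction and adjunction set-up above, is what makes the proof go through, and reproducing its proof is precisely the part I would \emph{not} grind out here.
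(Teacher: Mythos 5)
Your overall strategy --- contradiction, log resolution, preservation of non-uniruledness of the strict transform, and ultimately an appeal to \cite[Theorem 1.4 or 3.7]{LZ15} --- points at the right place; the paper itself remarks that the lemma is a special case of that result. But the internal mechanism you sketch has a genuine gap, one you in fact half-diagnose yourself. Neither a BDPP-type criterion applied to $D_W$ nor the adjunction $(K_W+D_W)|_{D_W}$ can be made to work directly: non-pseudo-effectivity of $K_W+D_W$ on $W$ gives no control on the sign of its restriction to the fixed divisor $D_W$, and your sketch never actually produces the ``auxiliary fibration or family of curves'' that would repair this. As written, everything between ``assume $K_W+D_W$ is not pseudo-effective'' and ``hence $D_W$ is uniruled'' is missing.

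The step that closes this gap in the paper is a Minimal Model Program, not a restriction or a BDPP argument. After reducing to a $\bQ$-factorial dlt pair $(X,D)$ on a log resolution, one runs a $(K_X+D)$-MMP with ample scaling; since $K_X+D$ is not pseudo-effective, by \cite[Corollary 1.3.3]{BCHM10} this terminates in a Mori fibre space $h\colon W\to Y$. The divisor $D$ is never contracted along the way, because exceptional loci of the MMP steps are uniruled (Lemma \ref{periodic-lemma-HM-RCC}) while $D$ is not, so it survives as a non-uniruled prime divisor $D_W$ on $W$. The argument of \cite[4.7. Proof of Theorem 3.7, the second paragraph]{LZ15} then shows that $h$ must be a $\bP^1$-fibration with $D_W$ a cross-section: $D_W$ cannot be vertical (it would then be swept out by the rationally connected fibres), and the $h$-ampleness of $-(K_W+D_W)$ forces $D_W\to Y$ to be generically finite of degree one. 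Hence $D_W$ is birational to $Y$, which is rationally connected as the image of the rationally connected $W$; so $D_W$ is uniruled --- the desired contradiction. Note that the rational connectedness of $X$ enters only at this last step, through the base $Y$, not through any positivity of $-(K_X+D)$ transverse to $D$. If you prefer not to run the MMP, the clean alternative is simply to quote \cite[Theorem 3.7]{LZ15} as a black box, which is legitimate here; but the hybrid you propose does not assemble into a proof.
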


\begin{proof}
Take a log resolution $\widetilde X\to X$ for the pair $(X, D)$, and denote by $\widetilde D$ the proper transform of $D$.
Note that the push-forward of a pseudo-effective divisor is still pseudo-effective.
Hence we may replace the pair $(X,D)$ by $(\widetilde X,\widetilde D)$, and assume that it is $\bQ$-factorial dlt now.

Suppose to the contrary that $K_X+D$ is not pseudo-effective.
We shall follow the proof of \cite[Theorem 3.7]{LZ15}.
After running a $(K_X+D)$-MMP with an ample scaling, we reach a Fano fibration $h \colon W\to Y$ as follows (cf.~\cite[Corollary 1.3.3]{BCHM10})
\[\xymatrix@C=.8cm{
  X=X_0 \ar@{-->}[r]^(.6){f_0} & X_1 \ar@{-->}[r]^{f_1} & \cdots \ar@{-->}[r]^(.4){f_{m-2}} & X_{m-1} \ar@{-->}[r]^(.4){f_{m-1}} & X_m \eqqcolon W \ar[d]^{h}\\
            & & & & Y.}
\]
Note that each $f_i$ above is either a divisorial contraction of a $(K_{X_i}+D_i)$-negative extremal ray
or a $(K_{X_i}+D_i)$-flip, where $D_i\subset X_i$ is the push-forward of $D$.
So $(X_i,D_i)$ is still $\bQ$-factorial and dlt (cf.~\cite[Corollary 3.44]{KM98}).
Thus $D_W \coloneqq D_m$, as the push-forward of $D$ on $W$, is still a non-uniruled prime divisor since so is $D$
(see Lemma \ref{periodic-lemma-HM-RCC}).
Then the argument in \cite[4.7. Proof of Theorem 3.7, the second paragraph]{LZ15} asserts that $h \colon W\to Y$ is a $\bP^1$-fibration with $D_W$ a cross-section.
Hence $D_W$ is birational to $Y$ via the restriction map $h|_{D_W}$.
Since $X$ is rationally connected, so are each $X_i$ and the $h$-image $Y$ of $W$.
Thus $D_W$ is rationally connected and hence uniruled.
This is a contradiction. So the lemma is proved.
\end{proof}

The following lemma provides sufficient conditions to have canonical singularities.
Note that under the condition (ii) it is a log-version of \cite[Lemma 2.4]{HMZ14}.

\begin{lemma} \label{lemma-can-sing}
Let $X$ be a normal projective variety of dimension $n$, and $D$ an effective Weil $\bQ$-divisor such that $K_X+D$ is $\bQ$-Cartier and $K_X+D \num 0$ (numerical equivalence).
Suppose that one of the following two conditions hold.
\begin{enumerate}[{\em (i)}]
  \item $X$ is rationally connected and $D$ is a non-uniruled prime divisor.
  \item $X$ is non-uniruled.
\end{enumerate}
Then $(X, D)$ has at worst canonical (and hence dlt) singularities.
Moreover, under the condition (i), the prime divisor $D$ itself as a variety is normal;
under the condition (ii), we further have $D = 0$ and hence $X$ has at worst canonical singularities.
\end{lemma}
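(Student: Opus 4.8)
The plan is to pass to a log resolution and reduce everything to an effectivity statement about the discrepancy divisor. Choose a log resolution $\pi\colon\widetilde X\to X$ of $(X,D)$ with $\widetilde D$ the (smooth) strict transform and $E_i$ the $\pi$-exceptional prime divisors, and write
\[
K_{\widetilde X}+\widetilde D=\pi^*(K_X+D)+\sum_i a_iE_i,\qquad a_i=a(E_i,X,D).
\]
Since $K_X+D\num 0$ we have $\pi^*(K_X+D)\num 0$, hence $K_{\widetilde X}+\widetilde D\num\sum_i a_iE_i$. Because $\widetilde D$ is reduced (coefficient $1$) and the $E_i$ are exceptional, proving that $(X,D)$ is canonical amounts exactly to showing $a_i\ge 0$ for all $i$, i.e.\ that $\sum_i a_iE_i$ is effective; canonical then forces the pair to be plt, whence $D=\lfloor D\rfloor$ is normal by \cite[Proposition 5.51]{KM98}, giving the \emph{Moreover} part under (i) for free. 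The tool I would record once and for all is the negativity fact: a $\pi$-exceptional pseudo-effective $\bR$-divisor is effective (a consequence of the negativity lemma \cite[Lemma 3.6.2 (1)]{BCHM10} together with the divisorial Zariski decomposition of \cite{Nakayama04}, the exceptional cone being negative definite against fibre curves).

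Under condition (ii) this finishes cleanly. As $X$, and hence $\widetilde X$, is non-uniruled, $K_{\widetilde X}$ is pseudo-effective by \cite[Corollary 0.3]{BDPP13}; adding the effective $\widetilde D$ shows $K_{\widetilde X}+\widetilde D\num\sum_i a_iE_i$ is pseudo-effective, and being $\pi$-exceptional it is effective, so $(X,D)$ is canonical. To upgrade this to $D=0$, I would intersect the relation $K_{\widetilde X}\num\sum_i a_iE_i-\widetilde D$ with the movable curve class $(\pi^*H)^{n-1}$ for an ample $H$ on $X$. A general complete-intersection curve avoids the (codimension $\ge 2$) centres of the $E_i$, so $E_i\cdot(\pi^*H)^{n-1}=H^{n-1}\cdot\pi_*E_i=0$, while $\widetilde D\cdot(\pi^*H)^{n-1}=H^{n-1}\cdot D$; pairing a pseudo-effective divisor against a movable class is $\ge 0$ (\cite{BDPP13}), forcing $-H^{n-1}\cdot D\ge 0$ and hence $D=0$. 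Then $K_X=K_X+D$ is $\bQ$-Cartier with $K_X\num 0$, and $X$ has at worst canonical singularities.

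Under condition (i) the variety $X$ is rationally connected, so $K_{\widetilde X}$ is \emph{not} pseudo-effective and the above global argument is unavailable; the positivity must instead be supplied by $D$. Here I would use ordinary adjunction on the smooth pair $(\widetilde X,\widetilde D)$: $K_{\widetilde D}=(K_{\widetilde X}+\widetilde D)|_{\widetilde D}\num(\sum_i a_iE_i)|_{\widetilde D}$. Since $\widetilde D$ is birational to $D$ it is non-uniruled, so $K_{\widetilde D}$ is pseudo-effective by \cite{BDPP13}; thus $(\sum_i a_iE_i)|_{\widetilde D}$ is a pseudo-effective divisor on $\widetilde D$ which is exceptional for $\widetilde D\to D^\nu$ (normalization), hence effective by the negativity fact. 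On an snc resolution the $E_i\cap\widetilde D$ are distinct smooth prime divisors, so effectivity of the restriction yields $a_i\ge 0$ for every $E_i$ meeting $\widetilde D$, i.e.\ $(X,D)$ is canonical \emph{along $D$}, which already gives the normality of $D$ via plt.

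The main obstacle is to control the exceptional divisors $E_i$ whose centres lie off $D$ (those with $E_i\cap\widetilde D=\varnothing$), for which the adjunction argument gives nothing and, $X$ being rationally connected, no global non-uniruledness is available. I expect the resolution to come from first establishing that $(X,D)$ is log canonical and then passing to a $\bQ$-factorial dlt modification crepant over $K_X+D\num 0$: its exceptional divisors are uniruled by Lemma \ref{periodic-lemma-HM-RCC}, and combining this with the numerical triviality $K_X+D\num 0$ and the negativity lemma should exclude negative discrepancies away from $D$, in the spirit of the minimal model program run in \cite{LZ15} (of which the present statement under (i) is a local counterpart). Making this step rigorous — in particular producing log canonicity as an \emph{a priori} input rather than a conclusion — is where the real work lies.
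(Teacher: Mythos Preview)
Your treatment of case (ii) is essentially the paper's argument: once $K_{\widetilde X}$ (and hence $K_{\widetilde X}+\widetilde D$) is pseudo-effective, the exceptional divisor $\sum_i a_iE_i$ is pseudo-effective, and the combination of the Zariski $\sigma$-decomposition with the negativity lemma forces it to be effective; your intersection argument for $D=0$ is a clean variant of what the paper does.

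The genuine gap is in case (i). You correctly diagnose that your adjunction-to-$\widetilde D$ argument only controls the exceptional divisors meeting $\widetilde D$, and you then speculate about dlt modifications and an \emph{a priori} log canonicity input. None of this is needed, and the speculated route does not obviously close the gap. The missing idea is that the same global pseudo-effectivity used in case (ii) is available in case (i) as well: since $\widetilde X$ is smooth and rationally connected and $\widetilde D$ is a non-uniruled prime divisor, Lemma~\ref{periodic-lemma-ps} (proved by running a $(K_{\widetilde X}+\widetilde D)$-MMP with scaling, exactly in the spirit of \cite{LZ15}) gives directly that $K_{\widetilde X}+\widetilde D$ is pseudo-effective. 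Once you have this, the argument is \emph{identical} to your case (ii): $\sum_i a_iE_i\num K_{\widetilde X}+\widetilde D$ is pseudo-effective and $\pi$-exceptional, hence effective, so $(X,D)$ is canonical everywhere---not just along $D$. The normality of $D$ then follows as you say. In short, you had the right reference (\cite{LZ15}) in hand at the end but applied it in the wrong place: it supplies pseudo-effectivity of $K_{\widetilde X}+\widetilde D$ up front, rendering the separate analysis of exceptional divisors off $D$ unnecessary.
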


\begin{proof}
Take a log resolution $\pi \colon \widetilde X\to X$ for the pair $(X, D)$ and denote the proper transform of $D$ by $\widetilde D$.
Under the condition (i), $\widetilde X$ is still rationally connected and $\widetilde D$ is non-uniruled.
So it follows from Lemma \ref{periodic-lemma-ps} that $K_{\widetilde X}+\widetilde D$ is pseudo-effective.
Under the condition (ii), $\widetilde X$ is also non-uniruled and hence $K_{\widetilde X}$ is pseudo-effective by \cite[Theorem 2.6]{BDPP13}.
Without loss of generality, we may assume that $K_{\widetilde X}+\widetilde D$ admits a Zariski $\sigma$-decomposition $K_{\widetilde X} + \widetilde D = P + N$,
where the $\bR$-divisors $P$ and $N$ are the movable part and the negative part of $K_{\widetilde X} + \widetilde D$,
respectively (cf.~\cite[Ch.~III, \S1.b]{Nakayama04}).
On the other hand, we have
$$K_{\widetilde X} + \widetilde D = \pi^*(K_X + D) + E_1 - E_2 \num E_1 - E_2, $$
where $E_1$ and $E_2$ are effective $\pi$-exceptional divisors and have no common component.
Now the same argument as in \cite[Lemma 2.4]{HMZ14} eventually shows that $E_2 = 0$,
and hence $(X,D)$ has only canonical singularities by definition.
Note however that to conclude $D = 0$ under the condition (ii) we need to consider the Zariski $\sigma$-decomposition of the pseudo-effective divisor $K_{\widetilde X}$.
The rest of the proof is similar to \cite[Lemma 2.4]{HMZ14} and left to the reader.
\end{proof}

When $X$ is a surface, we have a more specific description of $X$ and its periodic curves.

\begin{lemma} \label{periodic-lemma-surface}
Let $X$ be a normal projective surface with an automorphism $g$ of positive entropy, and $C$ a $g$-periodic curve.
Then either $X$ is a rational surface, or $C$ is a rational curve.
\end{lemma}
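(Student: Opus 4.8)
The plan is to reduce to a smooth minimal surface of Kodaira dimension $0$ and then combine the Hodge index theorem with adjunction. First I would pass to a $g$-equivariant resolution, so that $X$ becomes smooth: this replaces $(X,g)$ by a birational (hence generically finite surjective) $g$-equivariant cover, so $g$ stays of positive entropy by Lemma \ref{periodic-lemma-dyn-degree}, and $X$ is rational (resp. $C$ is rational) if and only if the resolution (resp. the proper transform of $C$) is. Replacing $g$ by a suitable power, I may assume $g(C)=C$, i.e.\ $g^*[C]=[C]$ in $\NS_\bR(X)$; since $d_1(g^k)=d_1(g)^k$, the entropy remains positive.

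The linear-algebraic heart of the argument applies to any smooth projective surface $S$ carrying a positive-entropy automorphism $h$. By the Hodge index theorem the intersection form $q$ on $\NS_\bR(S)$ has signature $(1,\rho(S)-1)$, and $h^*$ is a $q$-isometry whose spectral radius on $\NS_\bR(S)$ equals $\lambda\coloneqq d_1(h)>1$ by Lemma \ref{periodic-lemma-max-ev}. A standard Perron--Frobenius argument for isometries of a hyperbolic lattice produces nef isotropic eigenvectors $\theta^+,\theta^-$ with $h^*\theta^\pm=\lambda^{\pm1}\theta^\pm$ and $q(\theta^+,\theta^-)>0$; they span an $h^*$-invariant hyperbolic plane $P$, and $P^\perp$ is negative definite. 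The key observation is that any $h^*$-fixed class $v$ is orthogonal to $\theta^\pm$: from $q(v,\theta^+)=q(h^*v,h^*\theta^+)=\lambda\,q(v,\theta^+)$ one gets $q(v,\theta^+)=0$, and similarly for $\theta^-$, so every $h^*$-fixed class lies in the negative definite space $P^\perp$.

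I would use this twice. First, with $(S,h)=(X,g)$, to locate $X$ in the classification: if a fibration $X\to B$ onto a curve were $g$-equivariant, its fibre class $[F]\neq 0$ would be $g^*$-fixed and isotropic, forcing $[F]\in P^\perp$ and hence $[F]=0$, a contradiction. Thus $g$ preserves no fibration onto a curve, which rules out $\kappa(X)=1$ (the Iitaka elliptic fibration), the bielliptic case and the non-rational ruled case (via their Albanese maps); the case $\kappa(X)=2$ is excluded because then $\Aut(X)$ is finite. As $X$ is assumed non-rational, this leaves $\kappa(X)=0$, so the (canonical, hence $g$-equivariant) minimal model $\pi\colon X\to X_m$ has $X_m$ a K3, Enriques or abelian surface with $K_{X_m}\num 0$. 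If $C$ is $\pi$-exceptional it is already a rational curve; otherwise $\pi(C)$ is a $g$-periodic curve on $X_m$ whose proper transform is $C$, so it suffices to treat $X=X_m$ minimal (where $g$ is still of positive entropy by Lemma \ref{periodic-lemma-dyn-degree}). Applying the key observation to $v=[C]$ on $X_m$ gives $C^2=q([C],[C])<0$ since $[C]\neq0$, and adjunction together with $K_{X}\num 0$ yields $2p_a(C)-2=C^2+K_{X}\cdot C=C^2<0$, so $p_a(C)=0$ and $C\cong\bP^1$.

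I expect the main obstacle to be the linear-algebra setup of the second paragraph---producing $\theta^\pm$ and verifying that $P^\perp$ is negative definite---together with the classification inputs (finiteness of $\Aut$ in the general type case and the identification of the $\kappa=0$ minimal models), which carry the geometric content. Once the negative definiteness of the $g^*$-fixed subspace is established, both the exclusion of the competing surface types and the concluding adjunction computation are immediate.
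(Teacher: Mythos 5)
Your proposal is correct and follows essentially the same route as the paper: pass to a $g$-equivariant resolution, reduce via the classification to a minimal surface with $K_X\num 0$, use the Perron--Frobenius nef eigenclasses $L_{g^{\pm1}}$ together with the Hodge index theorem to get $C^2<0$, and conclude by adjunction. The only difference is cosmetic: where the paper cites Cantat's result that a surface with a positive-entropy automorphism is rational or of Kodaira dimension $0$, you re-derive it by noting that $g^*$-fixed classes lie in the negative definite orthogonal complement of the hyperbolic plane spanned by $\theta^\pm$, which is a fine (and standard) elaboration of the same mechanism.
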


\begin{proof}
Replacing $X$ by a $g$-equivariant resolution of singularities due to Hironaka \cite{Hironaka77},
we may assume that $X$ is smooth.
Since $X$ admits an automorphism of positive entropy, by \cite[Proposition 1]{Cantat99},
either $X$ is a rational surface, or it has Kodaira dimension $\kappa(X) = 0$.

Thus we have only to consider (and rule out) the case where $\kappa(X) = 0$ and $C$ is irrational.
Let $X \to X_m$ be the smooth blowdown to the (unique smooth) minimal model of $X$.
Note that the image $C_m$ of $C$ is still a curve by Lemma \ref{periodic-lemma-HM-RCC},
and $g$ descends to an automorphism on $X_m$.
So we may replace $(X,C)$ by $(X_m,C_m)$, and assume that $X$ is minimal.
Hence $K_X \sim_{\bQ} 0$. More precisely, $X$ is either a $K3$ surface, or an Enriques surface, or an abelian surface (cf.~\cite[Proposition 1]{Cantat99}).

Replacing $g$ by some power, we may assume that $g$ stabilizes the curve $C$.
The generalized Perron--Frobenius theorem due to Birkhoff asserts that
$(g^{\pm 1})^*L_{g^{\pm 1}} \num d_1(g^{\pm 1}) L_{g^{\pm1 }}$ for some nonzero nef divisors $L_{g^{\pm 1}}$.
Then $A\coloneqq L_g+L_{g^{-1}}$ is nef and also big since $A^2 \ge L_g \cdot L_{g^{-1}} > 0$.
It is perpendicular to $C$ because $d_1(g^{\pm 1}) > 1$.
Indeed,
\begin{equation}\label{periodic-eqn-perp}
L_{g^{\pm 1}}\cdot C = (g^{\pm 1})^*(L_{g^{\pm 1}}\cdot C) = (g^{\pm 1})^*L_{g^{\pm 1}}\cdot (g^{\pm 1})^*C = d_1(g^{\pm 1})L_{g^{\pm 1}}\cdot C.
\end{equation}
It follows that $L_{g^{\pm 1}}\cdot C=0$, and hence $A\cdot C=0$.
Thus $C^2<0$ by the Hodge index theorem, since $A^2 > 0$.

On the other hand, by the arithmetic genus formula, we have
$$0 > C^2=(K_X+C)\cdot C=2p_a(C)-2\ge 0,$$
since $C$ is irrational.
This is a contradiction. Lemma \ref{periodic-lemma-surface} is proved.
\end{proof}

\begin{remark} \label{periodic-remark-surface}
Suppose $X$ is a smooth projective rational surface with an automorphism $g$ of positive entropy. Then $K_X^2<0$.
Indeed, since $g^*K_X\sim K_X$, we have $A \cdot K_X = 0$ as calculated in the equation \eqref{periodic-eqn-perp} of the lemma above with $C$ replaced by $K_X$.
Hence either $K_X\num 0$, or $K_X^2<0$.
Since $X$ is a smooth rational surface, $K_X$ is not numerically trivial, so $K_X^2 < 0$.

If $C$ is a $g$-periodic curve on $X$, then the arithmetic genus $p_a(C) \le 1$.
Otherwise, the Riemann--Roch theorem and the Serre duality imply that
$$h^0\big(X, \cO_X(K_X+C)\big) \ge \chi(\cO_X) + \frac{1}{2} \, C\cdot (K_X+C) = p_a(C) \ge 2.$$
So the nef part of the Zariski-decomposition of $K_X + C$ is nonzero and $g$-invariant, contradicting $d_1(g) > 1$.
\end{remark}

We end this section with the following rigidity result for the proof of Proposition \ref{periodic-prop-finite-periodic} (\ref{periodic-prop-finite-periodic_2}).
It follows from \cite[Lemma 1.6]{KM98} and \cite[Proposition 1.14 or Lemma 1.15]{Debarre01}.

\begin{lemma}[Rigidity Lemma] \label{periodic-lemma-rigidity}
Let $f \colon X\to Y$ be a projective surjective morphism of normal varieties.
Suppose that all fibres of $f$ are connected and of the same dimension.
Let $f' \colon X\to Y'$ be another projective morphism of varieties such that
$f'(f^{-1}(y_0))$ is a point for some $y_0\in Y$.
Then there is a unique morphism $\pi \colon Y\to Y'$ such that $f'=\pi\circ f$.
\end{lemma}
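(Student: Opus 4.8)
The plan is to reduce the statement to the single assertion that $f'$ contracts \emph{every} fibre of $f$ to a point, and then invoke the standard factorization principle. First I would record that, since $f$ is proper with connected fibres and $Y$ is normal, one has $f_*\cO_X = \cO_Y$: the Stein factorization $X \to Y'' \to Y$ of $f$ has $Y'' \to Y$ finite, and connectedness of the fibres of $f$ makes it bijective, hence an isomorphism because $Y$ is normal and we work in characteristic zero. With $f_*\cO_X = \cO_Y$ in hand, the factorization lemma \cite[Lemma 1.6]{KM98} (equivalently \cite[Proposition 1.14]{Debarre01}) shows that any morphism out of $X$ that is constant on each fibre of $f$ factors uniquely through $f$. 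So it suffices to prove that $f'\big(f^{-1}(y)\big)$ is a single point for every $y\in Y$, which will simultaneously yield the existence and the uniqueness of $\pi$.

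For this contraction statement I would pass to the graph. Consider $F\coloneqq(f,f')\colon X\to Y\times Y'$; since $f$ and $f'$ are projective, $F$ is proper and its image $\Gamma\coloneqq F(X)$ is a closed (irreducible) subvariety of $Y\times Y'$. Let $p\colon\Gamma\to Y$ be the first projection, which is proper and surjective, and note that $p^{-1}(y)=\{y\}\times f'\big(f^{-1}(y)\big)$, so $\dim p^{-1}(y)=\dim f'\big(f^{-1}(y)\big)$. By hypothesis $p^{-1}(y_0)$ is a single point; hence by upper-semicontinuity of fibre dimension the generic fibre of $p$ is finite, i.e.\ $p$ is generically finite, and therefore $\dim\Gamma=\dim Y$.

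The decisive input is now the common fibre dimension of $f$. Writing $d$ for this dimension, surjectivity of $f$ gives $\dim X=\dim Y+d$, whence $\dim X-\dim\Gamma=d$. Since $F\colon X\to\Gamma$ is a surjective proper morphism of irreducible varieties, every fibre of $F$ has dimension at least $\dim X-\dim\Gamma=d$. For $y\in Y$ and $z\in f'\big(f^{-1}(y)\big)$ the fibre $F^{-1}(y,z)$ is a closed subset of $f^{-1}(y)$; since $\dim F^{-1}(y,z)\ge d$ while $\dim f^{-1}(y)=d$, every such fibre has dimension exactly $d$. Thus all fibres of the restriction $f'|_{f^{-1}(y)}$ are $d$-dimensional, and the theorem on the dimension of fibres forces $f'\big(f^{-1}(y)\big)$ to be $0$-dimensional; being the continuous image of the connected set $f^{-1}(y)$, it is a single point. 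This is exactly the contraction statement, and combined with the first paragraph it completes the proof.

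I expect the main obstacle to be the dimension bookkeeping in the last step — in particular, verifying $\dim\Gamma=\dim Y$ cleanly and applying the lower bound $\dim F^{-1}(\cdot)\ge d$ correctly when $f^{-1}(y)$ is reducible, where one argues on each top-dimensional component and then invokes connectedness to conclude the image is a point rather than a finite set. It is precisely here that the hypothesis of equidimensional fibres enters in an essential way: without it, $\dim X-\dim\Gamma$ could be strictly less than $\dim f^{-1}(y)$, and $p\colon\Gamma\to Y$ could acquire positive-dimensional fibres with no contradiction.
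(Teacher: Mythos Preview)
Your proof is correct and follows the same route the paper indicates: the paper gives no independent argument but simply records that the lemma follows from \cite[Lemma 1.6]{KM98} and \cite[Proposition 1.14 or Lemma 1.15]{Debarre01}. Your two-step reduction --- first establishing $f_*\cO_X=\cO_Y$ so that \cite[Lemma 1.6]{KM98} applies, then the graph-and-dimension-count argument to show every fibre is contracted --- is precisely the content of those cited references, so there is nothing to add.
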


\section{Proofs of Theorems \ref{periodic-thm-ThmA'} and \ref{periodic-thm-ThmB'}}\label{periodic-section-proofs}

Our proof of Theorem \ref{periodic-thm-ThmA'} will rely on the following two lemmas.

\begin{lemma} \label{periodic-lemma-Null}
Suppose that we have the sequence \eqref{periodic-eqn-seq1} of $G$-equivariant birational maps as in Proposition \ref{periodic-prop-Zhang-klt}.
Then we have the following relations among the $\Per_+(X_i,G)$.
\begin{enumerate}[{\em (1)}]
  \item For a divisorial contraction $\tau_i$ with $0\le i< s$ and
  for the birational morphism $\tau_i$ with $i = s$, we have
  $$\Per_+(X_i,G) = \tau_i^{-1} \big(\Per_+(X_{i+1},G)\big).$$
  Moreover, every irreducible component of the exceptional locus $\Exc (\tau_i)$ is uniruled.
  \item If $\tau_i$ is a $(K_{X_i} + D_i)$-flip for some $0\le i< s$:
  \[\xymatrix{
  X_i \ar@{-->}[rr]^(0.4){\tau_i} \ar[dr]_{f}
                &  &    X_{i+1}=X_i^+ \ar[dl]^{f^+}    \\
                & V_i },
  \]
  then there is a Zariski closed subset $\Delta_i \subset V_i$ such that
  the flipping locus $\Exc (f) = f^{-1}(\Delta_i)$ and the flipped locus $\Exc (f^+) = (f^+)^{-1}(\Delta_i).$
  Further, $\Per_+(X_i, G) = f^{-1} \big(\Per_+(V_i, G)\big)$ and $\Per_+(X_{i+1}, G) = (f^+)^{-1} \big(\Per_+(V_i, G)\big).$
  Every irreducible component of the flipping locus $\Exc (f)$ or the flipped locus $\Exc (f^+)$ is uniruled.
\end{enumerate}
\end{lemma}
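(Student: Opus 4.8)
The plan is to analyze the two cases of the statement separately, using the structure of the $G$-MMP from Proposition~\ref{periodic-prop-Zhang-klt} together with the equivariance of all the maps involved and Lemma~\ref{periodic-lemma-HM-RCC} to control uniruledness. The fundamental principle throughout is that a birational contraction which is $G$-equivariant carries $G$-periodic subvarieties to $G$-periodic subvarieties (and vice versa for the set-theoretic inverse image), so the identities relating the various $\Per_+$ loci will follow once I track where periodic subvarieties go. The point that requires care is separating the behavior of the exceptional loci (which get contracted) from the behavior of the complementary open locus where $\tau_i$ is an isomorphism.

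\smallskip
For assertion~(1), consider a divisorial contraction $\tau_i\colon X_i\to X_{i+1}$ (the case $i=s$ being the morphism $\tau_s$, treated identically). First I would establish the set-theoretic identity $\Per_+(X_i,G)=\tau_i^{-1}\big(\Per_+(X_{i+1},G)\big)$. The inclusion $\supseteq$ is easy: the preimage of a $G$-periodic subvariety is $G$-periodic by equivariance, and positive-dimensional since $\tau_i$ has connected fibers. For $\subseteq$, let $Z\subset X_i$ be a positive-dimensional $G$-periodic proper subvariety. If $Z$ is not contained in $\Exc(\tau_i)$, then $\tau_i(Z)$ is again positive-dimensional, proper, and $G$-periodic by equivariance, so $Z\subseteq\tau_i^{-1}\big(\Per_+(X_{i+1},G)\big)$. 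The remaining case is when $Z\subseteq\Exc(\tau_i)$; here I would invoke Lemma~\ref{periodic-lemma-HM-RCC}~(2), applied to the dlt pair $(X_i,D_i)$ (recall $X_i$ is $\bQ$-factorial klt by Proposition~\ref{periodic-prop-Zhang-klt}~(3), so in particular dlt), to conclude that every irreducible component of $\Exc(\tau_i)$ is uniruled---this simultaneously proves the ``moreover'' clause. Since $\tau_i$ is a divisorial contraction, $\Exc(\tau_i)$ is itself a $G$-periodic prime divisor, hence lies in $\Per_+(X_i,G)$, and its image is a $G$-periodic subvariety of $X_{i+1}$ of lower dimension (a point or positive-dimensional), so it too is captured by $\tau_i^{-1}\big(\Per_+(X_{i+1},G)\big)$ after checking the image is proper and positive-dimensional, or is absorbed into the fiber structure.

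\smallskip
For assertion~(2), the flip $\tau_i=f^+{}^{-1}\circ f$ factors through the common contraction target $V_i$, where $f\colon X_i\to V_i$ and $f^+\colon X_{i+1}\to V_i$ are small contractions. The existence of the Zariski closed $\Delta_i\subset V_i$ with $\Exc(f)=f^{-1}(\Delta_i)$ and $\Exc(f^+)=(f^+)^{-1}(\Delta_i)$ is the standard description of the flipping and flipped loci as the non-isomorphism loci over the same subset of the base; I would recall that $f$ and $f^+$ are isomorphisms precisely over $V_i\setminus\Delta_i$. Both $f$ and $f^+$ are $G$-equivariant (as $V_i$ inherits a $G$-action and $G$ acts biregularly on each $X_j$ by Proposition~\ref{periodic-prop-Zhang-klt}~(6)), so applying the same argument as in part~(1) to each small contraction gives $\Per_+(X_i,G)=f^{-1}\big(\Per_+(V_i,G)\big)$ and $\Per_+(X_{i+1},G)=(f^+)^{-1}\big(\Per_+(V_i,G)\big)$; the uniruledness of every component of $\Exc(f)$ and $\Exc(f^+)$ again follows from Lemma~\ref{periodic-lemma-HM-RCC}~(2) applied to the dlt pairs $(X_i,D_i)$ and $(X_{i+1},D_{i+1})$.

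\smallskip
\emph{The main obstacle} I anticipate is the set-theoretic bookkeeping in the $\subseteq$ direction of part~(1) when $Z$ lies in the exceptional locus: one must argue that such $Z$, even though it may map to a lower-dimensional or point-like image in $X_{i+1}$, is still accounted for by $\tau_i^{-1}\big(\Per_+(X_{i+1},G)\big)$. This is subtle because $\Per_+$ only records \emph{proper} positive-dimensional periodic subvarieties, so one has to verify that the $\tau_i$-image of $\Exc(\tau_i)$ is either a point (whose full fiber, being the exceptional divisor, is periodic and positive-dimensional, hence re-entering via the fiber over that periodic point in $\Per_+(X_{i+1},G)$—this requires the image point to itself be periodic, which holds by equivariance) or a positive-dimensional periodic subvariety. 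The cleanest route is to exploit that $A_i|_Z\num 0$ characterizes $\Per_+(X_i,G)=\Null(A_i)$ via Lemma~\ref{periodic-lemma-Null-A} and Proposition~\ref{periodic-prop-Zhang-klt}~(5), so the identity $\Per_+(X_i,G)=\tau_i^{-1}\big(\Per_+(X_{i+1},G)\big)$ reduces to the compatibility $A_i=\tau_i^*A_{i+1}$ (for a contraction) of the nef-and-big divisors under push-forward, which sidesteps the case analysis entirely and is the approach I would ultimately prefer.
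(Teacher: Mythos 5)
Your treatment of assertion (1) and of the set-theoretic identities in assertion (2) follows the paper's route: the identities among the $\Per_+$ loci are exactly Lemma \ref{periodic-lemma-periodic} applied to the $G$-equivariant birational morphisms $\tau_i$, $f$, $f^+$ (your hands-on case analysis just re-proves that lemma), the existence of $\Delta_i$ is the uniqueness of flips, and the uniruledness of $\Exc(\tau_i)$ in (1) is Lemma \ref{periodic-lemma-HM-RCC}. One small slip there: Lemma \ref{periodic-lemma-HM-RCC}\,(2) requires the dlt pair to live on the \emph{target} of the birational morphism $\phi\colon W\to X$, so for $\tau_i\colon X_i\to X_{i+1}$ you should invoke the klt pair $(X_{i+1},D_{i+1})$, not $(X_i,D_i)$; this is harmless in (1) because Proposition \ref{periodic-prop-Zhang-klt}\,(3) makes every $(X_j,D_j)$ klt.

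That same orientation issue becomes a genuine gap in the last claim of assertion (2). There the morphisms are $f\colon X_i\to V_i$ and $f^+\colon X_{i+1}\to V_i$, both with target $V_i$, so Lemma \ref{periodic-lemma-HM-RCC}\,(2) needs a dlt (or klt) pair structure \emph{on $V_i$}; applying it ``to the dlt pairs $(X_i,D_i)$ and $(X_{i+1},D_{i+1})$'' proves nothing, since those pairs sit on the sources. Moreover $V_i$ is the target of a small contraction, hence is not $\bQ$-factorial in general, and $K_{V_i}+f_*D_i$ need not be $\bR$-Cartier, so no pair structure on $V_i$ is available for free. The paper supplies the missing step: choose an ample $H$ with $(K_{X_i}+D_i+\eps H)\cdot\ell=0$ on the contracted extremal ray and $(X_i,D_i+\eps H)$ still klt; the contraction theorem then gives $K_{X_i}+D_i+\eps H=f^*\Theta_i$ with $\Theta_i=K_{V_i}+f_*D_i+\eps f_*H$ $\bR$-Cartier, so $(V_i,f_*D_i+\eps f_*H)$ is klt and Lemma \ref{periodic-lemma-HM-RCC} applies to both $f$ and $f^+$. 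Note also that one cannot dodge this for the flipped locus by appealing to extremal rational curves on $X_{i+1}$: the curves contracted by $f^+$ are $(K_{X_{i+1}}+D_{i+1})$-\emph{positive}, so the cone theorem on $X_{i+1}$ gives no rational curves there, and the descent to a klt pair on $V_i$ is really needed.
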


\begin{proof}
(1) The first part follows directly from Lemma \ref{periodic-lemma-periodic}.
For the second one, we know that every $(X_i, D_i)$ is klt (so is dlt) by Proposition \ref{periodic-prop-Zhang-klt} (3).
Then it follows from Lemma \ref{periodic-lemma-HM-RCC} that every irreducible component of $\Exc (\tau_i)$ is uniruled.

(2) The first part follows from the uniqueness of flips (cf.~\cite[Lemma 6.2 and Corollary 6.4]{KM98}).
Now the second part follows, using also the $G$-equivariance of the morphisms $f$ and $f^+$ and Lemma \ref{periodic-lemma-periodic}.

Hence we still have to prove the last part.
We assume that $f$ is a contraction of a $(K_{X_i} + D_i)$-negative extremal ray $\bR_{\ge 0}[\ell]$.
Choose a suitable ample divisor $H$ such that
$$(K_{X_i} + D_i + \eps H) \cdot \ell = 0 \, \textrm{ and } \, (X_i, D_i + \eps H) \textrm{ is still klt}$$
for some $0 < \eps \ll 1$.
By the cone theorem in MMP (cf.~\cite{KM98} or \cite[Theorem 1.1]{Fujino11}),
there is an $\bR$-Cartier divisor $\Theta_i$ on $V_i$ such that $K_{X_i} + D_i + \eps H = f^*\Theta_i.$
By the projection formula, $\Theta_i = K_{V_i} + f_*D_i + \eps f_*H$.
Then $(V_i, f_*D_i + \eps f_*H)$ is a klt pair.
So Lemma \ref{periodic-lemma-HM-RCC} implies the last part.
We have proved Lemma \ref{periodic-lemma-Null}.
\end{proof}

The following lemma exposes the relationship among the irreducible components of these $\Per_+(X_i,G)$.
We will also use this lemma to prove Theorem \ref{periodic-thm-ThmB'} (\ref{periodic-thm-ThmB_2}) later.

\begin{lemma} \label{periodic-lemma-comp}
Under the assumption of Lemma \ref{periodic-lemma-Null},
for any $0\le i\le s$, every non-uniruled irreducible component of $\Per_+(X_i,G)$ is
$G$-equivariant birational to some irreducible component of $\Per_+(X_{i+1},G)$ by $\tau_i$,
which is then isomorphic at the generic point of that irreducible component.
\end{lemma}

\begin{proof}
We use the same notation as in Lemma \ref{periodic-lemma-Null}. Let $Z^i$ be any non-uniruled irreducible component of $\Per_+(X_i,G)$.

If $\tau_i$ is a divisorial contraction for some $0\le i< s$ or $\tau_s$, by Lemma \ref{periodic-lemma-Null} (1) above,
$Z^i$ is not contained in the exceptional locus of $\tau_i$.
Hence $Z^i$ is $G$-equivariant birational to its birational transform in $X_{i+1}$,
and the latter is also an irreducible component of $\Per_+(X_{i+1},G)$.

If $\tau_i$ is a flip for some $0\le i< s$, by Lemma \ref{periodic-lemma-Null} (2), $Z^i$ is not contained
in the exceptional locus of $f \colon X_i \to V_i$.
Hence $Z^i$ is $G$-equivariant birational to its birational transform in $V_i$,
and the latter one is $G$-equivariant birational to its proper transform in $X_{i+1}$ via the map $f^+ \colon X_{i+1} \to V_i$.
This last one in $X_{i+1}$ is also the birational transform of $Z^i$ via the birational map $\tau_i \colon X_{i} \ratmap X_{i+1}$,
and hence an irreducible component of $\Per_+(X_{i+1},G)$.
In the above argument, we use the fact that both exceptional loci of $f \colon X_i \to V_i$ and $f^+ \colon X_{i+1} \to V_i$ lie over the same Zariski closed subset $\Delta_i \subset V_i$.
\end{proof}

For a projective variety $V$, we take a resolution $\widetilde{V} \to V$ and define the {\it Albanese map}
\index{Albanese!Albanese map} \index{Albanese!Albanese variety}
$$\xymatrix{\alb_V \colon V \ar@{-->}[r] & \Alb(V) \coloneqq \Alb(\widetilde V)}$$
as the natural composition $\xymatrix{V \ar@{-->}[r] & \widetilde V \ar[r]^(0.4){\alb_{\widetilde V}} & \Alb(\widetilde V).}$
It is known that $\alb_V$ is a well-defined morphism when $V$ has at worst rational singularities.

\begin{proof}[Proof of Theorem \ref{periodic-thm-ThmA'}]
By Proposition \ref{periodic-prop-finite-null}, replacing $G$ by a finite-index subgroup, we may assume that $(X,G)$ satisfies \ref{periodic-Hyp-A}.
So we can apply Proposition \ref{periodic-prop-Zhang-klt} by choosing $D=0$.
Then our assertion (\ref{periodic-thm-ThmA_1}) is just Proposition \ref{periodic-prop-Zhang-klt} (5).

\begin{proof}[Proof of Assertion (\ref{periodic-thm-ThmA_2})] \renewcommand{\qedsymbol}{}
We are going to prove this assertion by the backward induction on the index $i$ of $X_i$.
We will use the sequence \eqref{periodic-eqn-seq1} of $G$-equivariant birational maps as in Proposition~\ref{periodic-prop-Zhang-klt} with $D = 0$.
Recall that for $0\le i\le s+1$, $A_i$ (an $\bR$-Cartier divisor) denotes the push-forward of $A$ on $X_i$,
where $A = \sum L_i$ is a nef and big $\bR$-Cartier divisor as constructed in Lemma \ref{periodic-lemma-big-A}.
By Proposition \ref{periodic-prop-Zhang-klt} (5), we know that $A_i|_Z \num 0$ for every positive-dimensional $G$-periodic proper subvariety $Z$ of $X_i$.
Replacing $G$ by a finite-index subgroup, we may assume that $G$ stabilizes every irreducible component of $\Per_+(X_i,G)$.

Let $Z$ be an irreducible component of $\Per_+(Y,G)$.
By Proposition \ref{periodic-prop-Zhang-klt} (4) (with $D = 0$ always in the current theorem),
we know that $K_Y + A_Y$ is an ample $\bR$-Cartier divisor on $Y$.
Then $K_Y|_Z$ is also an ample $\bR$-Cartier divisor on $Z$ since $A_Y|_Z\num 0$.
Assume further that $Z$ is non-uniruled.
Then by Lemma \ref{periodic-lemma-finite-auto} applied to $H \coloneqq K_Y|_Z$, we know that $G|_{Z}$ is finite.
Hence a finite-index subgroup of $G$ fixes $Z$ pointwise.
So the assertion (\ref{periodic-thm-ThmA_2}) holds true on $Y = X_{s+1}$.

By induction we assume that for any irreducible component $Z^{i+1}$ of $\Per_+(X_{i+1},G)$,
either $Z^{i+1}$ is uniruled, or a finite-index subgroup of $G$ fixes $Z^{i+1}$ pointwise.
Now we choose any irreducible component $Z^i$ of $\Per_+(X_i,G)$.
Assume further that this $Z^i$ is non-uniruled.
Then by Lemma \ref{periodic-lemma-comp}, $Z^i$ is $G$-equivariant birational to its birational transform in $X_{i+1}$ by $\tau_i$,
and the latter is also an irreducible component of $\Per_+(X_{i+1},G)$.
By the inductive hypothesis, a finite-index subgroup of $G$ fixes that latter birational transform of $Z^i$, and then it also fixes $Z^i$ pointwise.
This proves the assertion (\ref{periodic-thm-ThmA_2}).
\end{proof}

\begin{proof}[Proof of Assertion (\ref{periodic-thm-ThmA_3})] \renewcommand{\qedsymbol}{}
The first part of the assertion (\ref{periodic-thm-ThmA_3}) has been proved by Lemma \ref{periodic-lemma-rho}.
If $\rho(X) = n$, the same lemma also tells us that $K_X$ is numerically trivial. Then $K_X$ is pseudo-effective.
Thus the second part follows from \cite[Theorem 2.4]{Zhang16-TAMS} (under the condition (ii) there).
\end{proof}

\begin{proof}[Proof of Assertion (\ref{periodic-thm-ThmA_4})] \renewcommand{\qedsymbol}{}
By Lemma \ref{periodic-lemma-A}, we only need to consider the case that $X$ is not an abelian variety.
We first assume that the irregularity $q(X) > 0$.
By Hironaka \cite{Hironaka77}, we can take an $\Aut(X)$-equivariant resolution $\pi \colon \widetilde X\to X$.
Then $q(\widetilde X) = q(X) > 0$ because
$X$ has only klt and hence rational singularities (cf.~\cite[Theorem 5.22]{KM98}).
By \cite[Lemma 2.13]{Zhang09-Invent}, $\alb_{\widetilde X}$ is a (necessarily $\Aut(\widetilde{X})$-equivariant) surjective birational morphism.
Hence the same holds for $\alb_X$ because $X$ has only rational singularities.
Note that for any $G$-periodic prime divisor $D$ on $X$, the image $\alb_X(D)$ of $D$ is a $G$-periodic subvariety of the abelian variety $\Alb(X)$.
It follows from Lemma \ref{periodic-lemma-A} again that such $D$ is $\alb_X$-exceptional.
Then we get the upper bound of distinct $G$-periodic prime divisors by Proposition \ref{periodic-prop-PropB}.
Next we assume that $q(X) = 0$.
Suppose that $X$ has $s$ distinct $G$-periodic prime divisors $B_1,\dots,B_s$.
Then the upper bound of $s$ has also been given by Proposition \ref{periodic-prop-PropB}.
This proves the assertion (\ref{periodic-thm-ThmA_4}).
\end{proof}

We have completed the proof of Theorem \ref{periodic-thm-ThmA'}.
\end{proof}

\begin{proof}[Proof of Theorem \ref{periodic-thm-ThmB'}]
Take a $G$-equivariant log resolution $\pi \colon \widetilde X\to X$ for the pair $(X,D)$,
and denote by $\widetilde D$ the proper transform of $D$.
Note that $\widetilde D$ is still a $G$-periodic non-uniruled prime divisor.
Replacing $G$ by a finite-index subgroup, we may assume that $\widetilde D$ is stabilized by $G$ and $(\widetilde X,G)$ satisfies \ref{periodic-Hyp-A}
(see Proposition \ref{periodic-prop-finite-null} and \cite[Lemma 3.1]{Zhang16-TAMS}).
Replacing $(X,D)$ by $(\widetilde{X},\widetilde D)$, it suffices to show Theorem \ref{periodic-thm-ThmB'} for the dlt case.
(We show the assertion (\ref{periodic-thm-ThmB_2}) for instance.
Suppose that $X$ has a $G$-periodic irreducible subvariety $Z_2$ different from $D$.
Then the $\pi$-proper transform ${\widetilde Z}_2$ of $Z_2$ is an irreducible component of
$\Per_+(\widetilde X,G)$ different from $\widetilde{D}$,
so it is uniruled. Hence $Z_2$ is uniruled.)

\begin{proof}[Proof of Assertion (\ref{periodic-thm-ThmB_1})] \renewcommand{\qedsymbol}{}
The surface case has been dealt with by Lemma \ref{periodic-lemma-surface}, so we only consider the case $n\ge 3$.
If $X$ were not rationally connected, then replacing $G$ by a finite-index subgroup, $X$ is $G$-equivariant birational to a quasi-\'etale torus quotient (cf.~\cite[Theorem 2.4]{Zhang16-TAMS}).
On the other hand, by the affirmative answer to Question \hyperref[periodic-qestion-QnA]{\ref{periodic-qestion-QnA} (2)} (i.e., Proposition \ref{periodic-prop-PropA}), $X$ is not $G$-equivariant birational to a quasi-\'etale torus quotient.
This is a contradiction.
\end{proof}

\begin{proof}[Proof of Assertion (\ref{periodic-thm-ThmB_3})] \renewcommand{\qedsymbol}{}
The dlt assumption implies that $X$ is also klt, so we may apply Theorem \ref{periodic-thm-ThmA'} to the pair $(X,G)$.
Then the assertion (\ref{periodic-thm-ThmB_3}) is a direct consequence of Theorem \ref{periodic-thm-ThmA'} (\ref{periodic-thm-ThmA_2}),
since $D$ is a $G$-periodic non-uniruled prime divisor.
\end{proof}

\begin{proof}[Proof of Assertion (\ref{periodic-thm-ThmB_4})] \renewcommand{\qedsymbol}{}
By the assertion (\ref{periodic-thm-ThmB_1}) above, we can apply Lemma \ref{periodic-lemma-ps} and say that $K_X+D$ is pseudo-effective.
This in turn allows us to apply Proposition \ref{periodic-prop-Zhang-dlt} to the dlt pair $(X,D)$.
Note that the $G$-equivariant birational map $X\ratmap Y$ is originally constructed in Proposition \ref{periodic-prop-Zhang-klt}
for the klt pair $(X, (1-\eps)D)$ with $\eps > 0$ sufficiently small.
Then the assertion (\ref{periodic-thm-ThmB_4}) comes readily from Proposition \ref{periodic-prop-Zhang-dlt} (3).
\end{proof}

\begin{proof}[Proof of Assertion (\ref{periodic-thm-ThmB_5})] \renewcommand{\qedsymbol}{}
We first prepare the following for the proof of this assertion.
Note that $(X,D)$ is dlt, then $(X,D_\eps)$ is klt,
where $D_\eps\coloneqq(1-\eps)D$ for some $0<\eps\ll 1$ (cf.~\cite[Proposition 2.41]{KM98}).
So we can apply Proposition \ref{periodic-prop-Zhang-klt} to the klt pair $(X, D_\eps)$.
Then there is a sequence $\tau_s \circ \cdots \circ \tau_0$ of $G$-equivariant birational maps:
\begin{equation}\label{periodic-eqn-seq2} \tag{$\star\star$}
X=X_0 \overset{\tau_0}{\ratmap} X_1 \overset{\tau_1}{\ratmap} \cdots \overset{\tau_{s-1}}{\ratmap} X_s \overset{\tau_s}{\longrightarrow} X_{s+1}=Y
\end{equation}
such that each $\tau_j \colon X_j \ratmap X_{j+1}$ for $0\le j < s$ is either a divisorial contraction of a $(K_{X_j} + D_{\eps, j})$-negative extremal ray or a $(K_{X_j} + D_{\eps, j})$-flip;
the $\tau_s \colon X_s\to X_{s+1}=Y$ is a birational morphism such that
$K_{X_s}+D_{\eps, s} = \tau_s^*(K_Y+D_{\eps, Y});$
here $D_{\eps, i}\subset X_i$ for $0\le i\le s+1$ denotes the push-forward of $D_\eps$.
It follows from \cite[Corollaries 3.42 and 3.43]{KM98} that each $(X_i,D_{\eps, i})$ for $0\le i\le s$ is klt.
So $(Y,D_{\eps,Y})$ is also klt.
In particular, by Lemma \ref{periodic-lemma-HM-RCC}, each $D_{\eps, i}$ for $0\le i\le s+1$ is indeed a divisor since $D$ is non-uniruled.

Now the first part of the assertion (\ref{periodic-thm-ThmB_5}), i.e., $K_Y + D_Y \sim_\bQ 0$, follows from Proposition \ref{periodic-prop-Zhang-dlt} (2).

By the first part we have proved and Proposition \ref{periodic-prop-Zhang-klt} (4), we know that
$$-\eps D_Y + A_Y \sim_\bQ K_Y + (1-\eps) D_Y + A_Y$$
is an ample $\bR$-Cartier divisor.
Note also that $A_Y$ is $\bR$-Cartier by Proposition \ref{periodic-prop-Zhang-klt} (2), and then so is $D_Y$.
Hence by Lemma \ref{periodic-lemma-Q-Cartier}, $D_Y$ is $\bQ$-Cartier, and then so is $K_Y$.

Note that $Y$ is rationally connected (since so is $X$) and $D_Y$ is a non-uniruled divisor.
Hence by Lemma \ref{lemma-can-sing} (i), $K_Y+D_Y\sim_\bQ 0$ implies that $(Y, D_Y)$ has only canonical singularities (and $D_Y$ is a normal variety),
so does $Y$ (cf.~\cite[Corollary 2.35]{KM98}).
This proves the assertion (\ref{periodic-thm-ThmB_5}).
\end{proof}

\begin{proof}[Proof of Assertion (\ref{periodic-thm-ThmB_6})] \renewcommand{\qedsymbol}{}
By the adjunction theorem for dlt pairs (cf.~\cite[Proposition 3.9.2]{Fujino07} or \cite[\S16 and \S17]{Kollar92}),
\index{adjunction formula!for dlt pair}
there exists an effective divisor $\Diff_{D_Y}(0)$ on $D_Y$ such that
$$K_{D_Y} + \Diff_{D_Y}(0) = (K_Y + D_Y)|_{D_Y} \sim_\bQ 0.$$
Note that $D_Y$ itself (as a variety) is non-uniruled and normal.
Then by applying Lemma \ref{lemma-can-sing} (ii) to the pair $\big(D_Y, \Diff_{D_Y}(0)\big)$,
we have $\Diff_{D_Y}(0) = 0$ and $D_Y$ has at worst canonical singularities.
Thus $K_{D_Y}\sim_\bQ 0$. This proves the assertion (\ref{periodic-thm-ThmB_6}).
\end{proof}

\begin{proof}[Proof of Assertion (\ref{periodic-thm-ThmB_7})] \renewcommand{\qedsymbol}{}
By the assertion (\ref{periodic-thm-ThmB_4}) we have proved, every positive-dimensional $G$-periodic subvariety of $Y$ is contained in $D_Y$, so $\Per_+(Y,G)=D_Y$.
In particular, by Proposition \ref{periodic-prop-Zhang-klt} (5), we have $A_Y|_{D_Y} \num 0$.
We already see in the proof of the assertion (\ref{periodic-thm-ThmB_5}) that $-\eps D_Y+A_Y$ is an ample $\bR$-Cartier divisor,
and then so is $(-\eps D_Y+A_Y)|_{D_Y} \num -\eps D_Y|_{D_Y}$.
Note that by the assertion (\ref{periodic-thm-ThmB_5}), $D_Y$ is $\bQ$-Cartier.
The assertion (\ref{periodic-thm-ThmB_7}) follows.
\end{proof}

\begin{proof}[Proof of Assertion (\ref{periodic-thm-ThmB_2})] \renewcommand{\qedsymbol}{}
Suppose to the contrary that some $Z_k$ with $k\ge 2$ is non-uniruled.
Note that in our proof of the assertion (\ref{periodic-thm-ThmB_5}), we applied Proposition \ref{periodic-prop-Zhang-klt} to the klt pair $(X,D_\eps)$
and produced the sequence \eqref{periodic-eqn-seq2} of $G$-equivariant birational maps.
So by Lemma \ref{periodic-lemma-comp}, such $Z_k$ is $G$-equivariant birational to some irreducible component of $\Per_+(Y,G)$
by $\tau_{s}\circ \cdots \circ\tau_0$,
which is isomorphic at the generic point of $Z_k$.
On the other hand, the assertion (\ref{periodic-thm-ThmB_4}) says that $\Per_+(Y,G)=D_Y$ has only one irreducible component.
So such $Z_k$ is birational to $D_Y$. By the irreducibility of $Z_k$ we know that $Z_k$ coincides with $D = Z_1$, which is a contradiction.
This ends the proof of the assertion (\ref{periodic-thm-ThmB_2}).
\end{proof}

We have completed the proof of Theorem \ref{periodic-thm-ThmB'}.
\end{proof}

\begin{remark}
With the assumption and notation in Theorem \ref{periodic-thm-ThmB'}, we have:
\begin{enumerate}[(1)]
  \item It follows from Proposition \ref{periodic-prop-PropB} with $B_1\coloneqq D$ that the Picard number $\rho(X) \ge n+1$.
  \item Note that the positive-dimensional part of the singular locus $\Sing Y$ of $Y$ is contained in $D_Y$ by Theorem \ref{periodic-thm-ThmB'} (\ref{periodic-thm-ThmB_4}).
  So we have $\dim(\Sing Y) \le \max\{0,\dim Y-3\}.$
  Indeed, by Theorem \ref{periodic-thm-ThmB'} (\ref{periodic-thm-ThmB_5}), $(Y, D_Y)$ has at worst canonical singularities.
  After $(\dim Y-2)$-times hyperplane cutting as in \cite[Corollary 5.18]{KM98}, we reach a canonical surface
  pair $(S, D_S)$ (cf.~\cite[Lemma 5.17 (1)]{KM98}).
  So by \cite[Theorem 4.5]{KM98}, $D_S \cap \Sing S = \varnothing$,
  and hence $Y$ is smooth at its codimension-$2$ points lying inside $D_Y$.
  This shows that $\dim (D_Y \cap \Sing Y) \le \max\{0,\dim Y-3\}$.
  \item Suppose $\dim Y=2$. Then $Y$ is smooth in a neighborhood of $D_Y$, and $D_Y$ is a (smooth) elliptic curve,
  since $D_Y$ is normal and $K_{D_Y}\sim_\bQ 0$.
  \item Suppose $\dim Y=3$. Then $Y$ has at worst isolated singularities.
  Further, $K_{D_Y} \sim_{\bQ} 0$ implies that $D_Y$ is either a smooth abelian surface or hyperelliptic surface,
  or a normal $K3$ surface or Enriques surface with at worst Du Val singularities.
\end{enumerate}
\end{remark}

\section{Proofs of Theorem \ref{periodic-thm-ThmC} and Proposition \ref{periodic-prop-finite-periodic}}\label{periodic-section-proofs2}

\begin{proof}[Proof of Theorem \ref{periodic-thm-ThmC}]
(1) $\Longrightarrow$ (2) is proved by Lemma \ref{periodic-lemma-A}.

(2) $\Longrightarrow$ (1) comes from \cite[Theorem 2.4 or Lemma 3.10]{Zhang16-TAMS}.

(1) $\Longrightarrow$ (3) is true by choosing $D'=0$,
and note that quotient singularities are $\bQ$-factorial klt and $K_{X'} \sim_{\bQ} 0$.

(3) $\Longrightarrow$ (1) follows from \cite[Theorem 2.4]{Zhang16-TAMS} (under the condition (ii) there).

(1) $\Longrightarrow$ (4) is just our Proposition \ref{periodic-prop-PropA} (1).
\end{proof}

Finally, we shall prove Proposition \ref{periodic-prop-finite-periodic}.
But prior to that, we give two lemmas to deal with the abelian variety case.
It should be noted that even for the abelian variety case, unfortunately,
we have not been able to generalize Proposition \ref{periodic-prop-finite-periodic} (\ref{periodic-prop-finite-periodic_1}) to higher dimensions.

\begin{lemma} \label{periodic-lemma-big-general-type}
Let $X$ be an abelian variety of dimension $n\ge 2$ and $D$ a prime divisor.
Then $D$ as an algebraic variety is of general type if and only if $D$ is a big divisor.
\end{lemma}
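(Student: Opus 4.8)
The plan is to translate both conditions into a single statement about the connected stabilizer
$$B \coloneqq \big(\{x \in X : x + D = D\}\big)^0_{\mathrm{red}}$$
of $D$ under the group law, which is an abelian subvariety of $X$, and then to show that each side is equivalent to $\dim B = 0$.

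For the ``general type'' side I would invoke Ueno's fibration theorem for subvarieties of an abelian variety (cf.~\cite{Ueno75}): writing $p \colon X \to \bar X \coloneqq X/B$, the image $\bar D \coloneqq p(D)$ is a subvariety of general type and $\kappa(D) = \dim \bar D = \dim D - \dim B$. Hence $D$ is of general type if and only if $\dim B = 0$. The merit of routing through Ueno is that it computes the intrinsic Kodaira dimension directly on a desingularization, so I never have to worry about the singularities of $D$, which need not be normal even though it is Gorenstein (a Cartier divisor in the smooth variety $X$ is a local complete intersection). This is precisely the step where a naive adjunction argument ($\omega_D \cong \mathcal O_D(D)$ via $K_X \sim 0$) would run into trouble, since comparing the pluricanonical sections of the possibly non-normal $D$ with those of a resolution is delicate.

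For the ``big'' side I would first record that $D$, being effective and hence pseudo-effective, is nef by Lemma \ref{periodic-lemma-eff-on-torus} (condition (ii) with $\pi = \mathrm{id}$), and that by Lemma \ref{periodic-lemma-eff-on-torus}(2) every big divisor on $X$ is ample; since ample divisors are always big, bigness and ampleness coincide for $D$ on $X$. Thus it remains to prove that $D$ is ample if and only if $\dim B = 0$. The forward implication is elementary: if $\dim B > 0$, then $D = p^{-1}(\bar D)$ is $B$-invariant, so for a curve $C$ contained in a coset of $B$ (a fibre of $p$) one computes $D \cdot C = \bar D \cdot p_* C = 0$, contradicting ampleness; hence $\dim B = 0$.

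The converse, $\dim B = 0 \Rightarrow D$ ample, is where I expect the real work to lie. Here I would bring in the polarisation homomorphism $\phi_L \colon X \to \widehat X$, $x \mapsto t_x^* L \otimes L^{-1}$, attached to $L \coloneqq \mathcal O_X(D)$. The key point is that the reduced connected component of $\ker \phi_L$ coincides with $B$: on the one hand $t_x^* L \cong L$ for every $x$ fixing $D$, giving $B \subseteq \ker \phi_L$; on the other hand $L$ restricts to an algebraically trivial bundle on $K(L)^0_{\mathrm{red}}$ (as this subvariety lies in the kernel of the polarisation attached to $c_1(L)$), which forces each of its cosets either to lie in $D$ or to miss $D$ entirely, so that $D$ is $K(L)^0$-invariant and $K(L)^0_{\mathrm{red}} \subseteq B$. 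Consequently $\dim B = 0$ means $\ker \phi_L$ is finite, i.e.\ $L$ is nondegenerate; and a nondegenerate line bundle carrying a nonzero section has index $0$ and is therefore ample (the index theorem for nondegenerate line bundles on abelian varieties). Combining the three steps gives $D$ of general type $\iff \dim B = 0 \iff D$ ample $\iff D$ big. The two points that require care are the identification $B = K(L)^0_{\mathrm{red}}$ and the appeal to the index theorem; the non-normality of $D$ causes no difficulty because it is entirely absorbed by Ueno's theorem.
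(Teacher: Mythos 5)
Your proof is correct, but for the direction ``$D$ of general type $\Rightarrow$ $D$ big'' it takes a genuinely different route from the paper. Both arguments use Ueno's fibration theorem and both reduce bigness to ampleness via Lemma \ref{periodic-lemma-eff-on-torus}; the paper, however, uses Ueno only for the converse (big $\Rightarrow$ ample $\Rightarrow$ $K(\cO_X(D))$ finite $\Rightarrow$ the stabilizer $Z$ is trivial $\Rightarrow$ $D=D/Z$ is of general type), and handles the forward direction by a direct section count: it passes to the normalization $\nu\colon D^\nu\to D$ with conductor $C$, uses the adjunction $K_D=(K_X+D)|_D=D|_D$ together with $K_{D^\nu}+C=\nu^*K_D$ and a log resolution of $(D^\nu,C)$ to bound $h^0\big(D^\nu,m\nu^*(D|_D)\big)$ from below by $h^0(D',mK_{D'})$, concluding that $D|_D$ is big, hence $D^n=(D|_D)^{n-1}>0$ and $D$ is big. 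You replace that computation by the two-sided identification $B=K(L)^0_{\mathrm{red}}$ (the paper only ever needs the easy inclusion $Z\subseteq K(L)$) plus Mumford's index theorem, so that nondegeneracy together with $h^0(L)>0$ yields ampleness outright. What your route buys is that the non-normality of $D$ is entirely absorbed into Ueno's theorem and never has to be confronted; note, though, that the ``delicate'' adjunction comparison you flag as a pitfall is precisely what the paper carries out, correctly, via the conductor. The price is a heavier reliance on the structure theory of line bundles on abelian varieties (the vanishing $c_1\big(L|_{K(L)^0}\big)=0$ forcing each coset of $K(L)^0$ to lie in or miss $D$, and the index theorem), whereas the paper's computation uses only resolution of singularities and standard adjunction. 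Both proofs are complete; yours is arguably cleaner, the paper's more elementary in its inputs.
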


\begin{proof}
Suppose that $D$ as an algebraic variety is of general type.
Note that $D$ is a Cartier divisor on smooth $X$ so that the canonical divisor $K_D$ is a well-defined Cartier divisor.
Let $\nu \colon D^\nu \to D$ be the normalization of $D$ and $C$ the conductor of $\nu$ which is an effective divisor on $D^\nu$.
Then $$K_{D^\nu} + C = \nu^*K_D.$$
Also, by the (generalized) adjunction formula in \cite[Proposition 5.73]{KM98} and $K_X = 0$, we have
\index{adjunction formula!general version}
$$K_D = (K_X + D)|_D = D|_D.$$
Take a log resolution $\mu \colon D'\to D^\nu$ for the pair $(D^\nu, C)$.
We then have
$$K_{D'} + \mu^{-1}_*C + E_1 = \mu^*(K_{D^\nu} + C) + E_2,$$
where $E_1$ and $E_2$ are effective $\mu$-exceptional divisors, and $K_{D'}$ is big since $D$ is of general type.
Hence by the above three equalities we can show that
\begin{align*}
  & c \cdot m^{n-1} < h^0(D', mK_{D'}) \le h^0\big(D', m(K_{D'} + \mu^{-1}_*C + E_1)\big) \\
  & = h^0\big(D', m\mu^*(K_{D^\nu} + C) + mE_2\big) = h^0\big(D^\nu, m(K_{D^\nu} + C)\big) \\
  & = h^0\big(D^\nu, m\nu^*K_D\big) = h^0\big(D^\nu, m\nu^*(D|_D)\big),
\end{align*}
for some $c>0$ and $m\gg 1$.
The first inequality comes from the definition of big Cartier divisor,
and the second equality holds by the projection formula for the morphism $\mu \colon D'\to D^\nu$.
It follows that the Cartier divisor $\nu^*(D|_D)$ is big, then so is $D|_D$ since $\nu$ is birational
(or just by the definition of big Cartier divisors on non-normal varieties).
Hence $D|_D$ is nef and big, so $0 < (D|_D)^{n-1} = D^n$ (cf.~\cite[Proposition 2.61]{KM98}).
Note that $D$ on $A$ is nef. So $D$ is big.

Conversely, suppose that $D$ is a big divisor (and contains the origin point).
We have seen that $D$ is ample by Lemma \ref{periodic-lemma-eff-on-torus}.
Then it is well-known that $K(\cO_X(D))\coloneqq \Ker\phi_{\cO_X(D)}$ is finite (see e.g. \cite[Proposition 4.5.2]{BL04}),
where $\phi_{\cO_X(D)}$, the canonical map from $X$ to its dual abelian variety $\widehat{X} \coloneqq \Pic^0(X)$,
is defined as following
$$\phi_{\cO_X(D)} \colon X\to \widehat{X},\ \ x \mapsto T_x^*\cO_X(D) \otimes \cO_X(D)^{-1} = \cO_X(T_x^*D - D).$$
On the other hand, by \cite[Theorem 10.9]{Ueno75} or \cite[\S15.7]{BL04}, we also know that if denote by
$$Z\coloneqq \{x\in X : x + D \subset D\}^0$$
the identity connected component of the stabilizer of $D$ in $X$,
then $Z$ is an abelian subvariety of $X$ contained in $D$
such that the quotient variety $D/Z$ is of general type.
Note that in our situation, $Z$ is contained in the finite group scheme $K(\cO_X(D))$ and hence equals $0$.
It follows that $D$ is of general type which finishes the other direction of Lemma \ref{periodic-lemma-big-general-type}.
\end{proof}

\begin{lemma} \label{periodic-lemma-no-g-periodic}
Let $X$ be an abelian variety of dimension $n=2$ or $3$,
and $G\le \Aut_{\rm variety}(X)$ such that $G\isom \bZ^{\oplus n-1}$ is of positive entropy.
Then for any non-trivial $f\in G$, there is no $f$-periodic prime divisor.
\end{lemma}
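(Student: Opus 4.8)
The plan is to reduce everything to the eigenvalues of the linear part of $f$ on the tangent space of $X$ at the origin, and then to play the determinant constraint (from integrality on the period lattice) against the positive-semidefinite Hermitian form attached to an invariant divisor. Write $X = V/\Lambda$ with $V = \bC^n$, $n \in \{2,3\}$, and $\Lambda \cong \bZ^{2n}$. Suppose to the contrary that some non-trivial $f \in G$ has an $f$-periodic prime divisor; replacing $f$ by a power (still a non-trivial, positive-entropy element of $G \cong \bZ^{\oplus n-1}$) I may assume $f(D) = D$ for a prime divisor $D$. Decomposing $f = t_a \circ h$ into a translation $t_a$ and a group automorphism $h$, translations act trivially on cohomology, so $f^* = h^*$ on $\NS_\bC(X)$ and, by Lemma \ref{periodic-lemma-max-ev}, $d_1(f) = \rho\big(h^*|_{\NS_\bC(X)}\big) = (\max_i |\mu_i|)^2$, where $\mu_1,\dots,\mu_n$ denote the eigenvalues of $h$ on $V$. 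Since $h \in \GL(\Lambda)$, one has $\prod_i |\mu_i| = 1$, and positive entropy means $\max_i |\mu_i| > 1$.

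Next I would extract the geometric constraint from $D$. As $D$ is effective on an abelian variety it is nef (Lemma \ref{periodic-lemma-eff-on-torus}); let $H_D \ge 0$ be the associated Hermitian form on $V$ and $K \coloneqq \Ker H_D$, with $k \coloneqq \dim_\bC K \le n-1$ (as $D \ne 0$ forces $H_D \ne 0$). From $f^*[D] = [D]$ the automorphism $h$ is unitary for $H_D$, hence $h(K) = K$; moreover $H_D$ descends to a positive-definite Hermitian form on $V/K \cong T_0(X/Z)$, where $Z$ is the abelian subvariety with $T_0 Z = K$ (cf.\ the structure theory used in Lemma \ref{periodic-lemma-big-general-type} and \cite{BL04}). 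Consequently every eigenvalue of $h$ on $V/K$ has modulus $1$. The easy cases now follow by counting moduli: if $k = 0$ then $D$ is big, hence ample, all $|\mu_i| = 1$, and $d_1(f) = 1$, a contradiction; if $k = 1$ then $n-1$ of the $\mu_i$ have modulus $1$ and the relation $\prod_i|\mu_i| = 1$ forces the last to have modulus $1$ too, again giving $d_1(f) = 1$. This settles $n = 2$ completely (there $k \le 1$), and reduces $n = 3$ to the single case $k = 2$.

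The hard part is precisely the case $n = 3$, $k = 2$, where a single $f$ can genuinely be of positive entropy while fixing $D$ (a translate of the abelian surface $Z$), so I must bring in the maximal rank of $G$. Here $V/K$ is a line on which $h$ acts with modulus $1$, while the two eigenvalues of $h$ on $K$ have product of modulus $1$ and, by positive entropy, cannot both have modulus $1$; thus $h$ has three eigenvalues of pairwise distinct moduli $|\mu_1| > 1 = |\mu_3| > |\mu_2|$, so $h$ is diagonalizable and $K$ is the sum of the two eigenlines of $h$ whose eigenvalues have modulus $\ne 1$. Since $G$ is abelian, for every $g \in G$ the group-automorphism part $h_g$ commutes with $h$; as $h$ has distinct eigenvalues, $h_g$ preserves each eigenline, so $h_g(K) = K$. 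Hence $Z$ is preserved by every $h_g$, each $g \in G$ carries $Z$-cosets to $Z$-cosets, and the quotient $\pi \colon X \to B \coloneqq X/Z$ onto the elliptic curve $B$ is a non-trivial $G$-equivariant fibration. By the rank-drop lemma \cite[Lemma 2.10]{Zhang09-Invent} (as used in the proof of Proposition \ref{periodic-prop-PropB}), this forces $r(G) \le n-2 = 1$, contradicting $r(G) = 2$.

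The crux of the whole argument is this commuting-eigenspace step: it is what upgrades the $f$-invariance of $K$ to $G$-invariance and thereby converts the maximal-rank hypothesis into a non-trivial equivariant fibration over a curve, which the positive-entropy group of rank $n-1$ cannot tolerate. The determinant bookkeeping disposes of all other cases cheaply, so the entire difficulty of the lemma is concentrated in handling an invariant divisor that is a coset of a codimension-one abelian subvariety in the threefold case.
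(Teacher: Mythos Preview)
Your argument is correct, and it follows a genuinely different route from the paper's. The paper splits according to the Kodaira dimension of $D$ as a variety and argues geometrically: in the general-type case it uses Lemma~\ref{periodic-lemma-big-general-type} to get bigness; in the intermediate case it invokes the Dinh--Nguy\^en product formula for relative dynamical degrees to force $d_1(f)=1$; and in the case $D = \delta + Z$ with $Z$ an abelian surface it shows $d_1(g|_Z)>1$, then argues that if some $g_i(Z)\not\subseteq Z$ the intersection $g_i(Z)\cap Z$ produces a $g$-periodic curve on the abelian surface $Z$, contradicting the surface case. You instead work entirely with the analytic representation of the linear part $h$ on $V=\bC^n$ and the Appell--Humbert Hermitian form $H_D$: the constraint $\prod_i|\mu_i|=1$ together with unitarity on $V/K$ kills the cases $k=0,1$ in one line (replacing both the bigness argument and the Dinh--Nguy\^en input), and in the remaining case $k=2$ the fact that $h$ has three eigenvalues of pairwise distinct moduli lets you identify $K$ intrinsically as the span of the eigenlines of modulus $\ne 1$, so that every commuting $h_g$ automatically preserves $K$ and hence $Z$. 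This yields the $G$-equivariant fibration $X\to X/Z$ directly, without the paper's dichotomy on whether $g_i(Z)=Z$ and without recourse to the surface case or to \cite{DN11}. Your approach is more elementary and self-contained; the paper's approach, on the other hand, is phrased in terms that would more readily suggest how to attack higher dimensions (where the simple modulus count no longer closes the intermediate cases).
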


\begin{proof}
If $X$ is an abelian surface admitting an automorphism $f$ of positive entropy
and $C$ is an irreducible $f$-periodic curve, then by \cite[Lemma 10.1]{Ueno75}, $\kappa(C)\ge 0$ and hence $C$ is irrational,
which contradicts Lemma \ref{periodic-lemma-surface}.
Thus we only need to consider the case $X$ is an abelian $3$-fold.

Suppose to the contrary that there exists at least one $f$-periodic prime divisor $D$.
Replacing $f$ by some power, we may assume that $f(D) = D$.
Write $f = T_a \circ g$ with $T_a$ a translation and $g$ a group automorphism (fixing the origin point).
Also, after replacing $f$ by $T_{-d}\circ f \circ T_d$, $G$ by $T_{-d}\circ G \circ T_d$, and $D$ by $T_{-d}(D) = D - d$ for some $d\in D$,
we may assume that $D$ contains the origin point.
According to the Kodaira dimension of $D$, we have the following three cases.

Case 1): $\kappa(D) = \dim D = 2$, i.e., $D$ as an algebraic variety is of general type.
Then $D$ is big by Lemma \ref{periodic-lemma-big-general-type}.
It follows that $f$ is of null entropy by Lemma \ref{periodic-lemma-max-ev} (see also \cite[Lemma 2.23]{Zhang09-JDG}),
contradicting our assumption on $f$.

Case 2): $\kappa(D) = 1$. By \cite[Theorem 10.9]{Ueno75} or \cite[\S15.7]{BL04},
the identity connected component $Z$ of the stabilizer of $D$ in $X$, i.e.,
$Z\coloneqq \{x\in X : x + D \subseteq D\}^0,$
is an abelian subvariety of $X$ containing in $D$,
such that $\dim Z = \dim D - \kappa(D) = 1$ and $D/Z$ is of general type.
As in \cite[Lemma 2.11]{Zhang10-AIM223}, we can prove that $g(Z) = Z$ and hence $\pi \colon X\to X/Z$ is an $f$-equivariant
(and also $g$-equivariant) fibration with a fibre $Z$.
Indeed, for any $z\in Z$, we have
$$g(z) + D = g(z) + f(D) = g(z) + (a + g(D)) = a + g(z + D) \subseteq a + g(D) = f(D) = D.$$
So $g(Z) = Z$ because $g$ is a group automorphism.
Using the main result of Dinh--Nguy{\^e}n \cite{DN11}, we have
$d_1(f|_X) = d_1(g|_X) = \max\{d_1(g|_{X/Z}), d_1(g|_\pi)\},$
where $d_1(g|_\pi)$ denotes the relative dynamical degree in their sense.
Note that the $g$-action on $X/Z$ has a fixed point, the origin point.
So by \cite[Remark 3.4]{DN11}, $d_1(g|_\pi) = d_1(g|_Z)$ and hence equals $1$ since $\dim Z = 1$.
On the other hand, $f|_{X/Z}$ stabilizes the nef and big divisor $D/Z$
(bigness comes from Lemma \ref{periodic-lemma-big-general-type} again).
Hence $d_1(g|_{X/Z}) = d_1(f|_{X/Z}) = 1$.
Overall, we have shown that $d_1(f|_X) = 1$, which is a contradiction.

Case 3): $\kappa(D)\le 0$.
Then $\kappa(D) = 0$ and $D = \delta + Z$ is a translation of some abelian subvariety $Z$ of $X$ (cf.~\cite[Theorem 10.3]{Ueno75}).
Now we have $\delta + Z = D = f(D) = a + g(D) = a + g(\delta) + g(Z)$.
Since $g$ is a group automorphism and $Z$ contains the origin point, $a + g(\delta) - \delta \in Z$ and hence $g(Z) = Z$.
Then consider the quotient map $\pi \colon X \to X/Z$ which is a $g$-equivariant (and also $f$-equivariant) fibration.
As in Case 2), we also have the following equalities concerning the first dynamical degrees
$d_1(f|_X) = d_1(g|_X) = \max\{d_1(g|_{X/Z}), d_1(g|_\pi)\} = d_1(g|_Z),$
here $d_1(g|_{X/Z}) = 1$ because $Z$ is an abelian surface and hence $\dim X/Z = 1$.
Thus we have deduced that $d_1(g|_Z)>1$ since $f$ is of positive entropy.

Write $G=\langle f_1, f_2\rangle$ and $f_i = T_{a_i} \circ g_i$ for group automorphisms $g_i$.
Consider the induced composite morphisms $\pi_i \colon g_i(Z) \hookrightarrow X \to X/Z$.
Suppose that for each $i$, $\dim \im \pi_i = 0$, i.e., $\im \pi_i$ is the origin point of the elliptic curve $X/Z$.
So $g_i(Z) = Z$ and then it follows that $\pi \colon X\to X/Z$ is $G$-equivariant, contradicting with \cite[Lemma 2.10]{Zhang09-Invent}.
Therefore, we may assume that $\pi_1$ dominates $X/Z$ and hence it is flat by \cite[Proposition 9.7]{Hartshorne}.
Moreover, every irreducible component of the geometric fibre of $\pi_1$ over $0$ (i.e., $g_1(Z)\cap Z$) has dimension $1$.
Let $F$ be any such irreducible component.
It is easy to see that this $F$ is a $g$-periodic curve in $Z$ (since $f_i$ commutes with $f$ implies $g_i$ commutes with $g$).
However, as we have seen, an abelian surface can not contain any $g$-periodic curve for any automorphism $g$ of positive entropy.
So we also derive a contradiction in this case and hence finish the proof of Lemma \ref{periodic-lemma-no-g-periodic}.
\end{proof}

\begin{remark}
Under the same conditions in Lemma \ref{periodic-lemma-no-g-periodic},
we can show that there is no (irreducible) $f$-periodic curve either.
Actually, let $C$ be an irreducible $f$-periodic curve on an abelian $3$-fold $X$.
If $\kappa(C) = 1$, i.e., $C$ is of general type and hence $\Aut(C)$ is finite.
So $f^m|_C = \id_C$ for some $m > 0$. Write $f = T_a \circ g$ as usual.
It follows from \cite[Lemma 13.1.1]{BL04} that the identity component $Z$ of the pointwise fixed point set $X^{g^m}$ is a positive-dimensional abelian subvariety.
Then $X\to X/Z$ is a $G$-equivariant fibration (see also \cite[Lemma 2.11]{Zhang10-AIM223}),
which contradicts with \cite[Lemma 2.10]{Zhang09-Invent}.
If $\kappa(C) = 0$, then $C = \delta + E$ for some abelian subvariety $E$ of dimension $1$.
Consider the quotient map $\pi \colon X \to X/E$ which is a $g$-equivariant fibration.
As in the proof of Lemma \ref{periodic-lemma-no-g-periodic}, we may assume that the dimension of $\pi(g_1(E))$ is $1$,
i.e., $\pi(g_1(E))$ is a $g$-periodic curve in the abelian surface $X/E$.
Note also that $d_1(g|_{X/E}) = d_1(g|_X) > 1$ by Dinh--Nguy{\^e}n \cite{DN11}.
This contradicts the surface case of Lemma \ref{periodic-lemma-no-g-periodic}.
\end{remark}

\begin{proof}[Proof of Proposition \ref{periodic-prop-finite-periodic}]

The assertion (\ref{periodic-prop-finite-periodic_1}) has been proved by Lemma \ref{periodic-lemma-no-g-periodic}.
To prove the assertion (\ref{periodic-prop-finite-periodic_2}), we first consider the case that the irregularity $q(X) > 0$.
Then the Albanese map $\alb_X\colon X\to \Alb(X)$ is a $G$-equivariant birational surjective morphism
by the maximality of the dynamical rank of $G$ (cf.~\cite[Lemma 2.13]{Zhang09-Invent}).
So for any $g$-periodic prime divisor $D$ on $X$, one has $\alb_X(D)$ is a $g$-periodic subvariety of $\Alb(X)$.
However, according to Lemma \ref{periodic-lemma-no-g-periodic},
$\alb_X(D)$ can not be a $g$-periodic divisor, i.e., $D$ is an $\alb_X$-exceptional divisor.
Hence for any $1\le i\le n-1$, it follows from the commutativity of $G$ that each $g_i(D)$ is also a $g$-periodic $\alb_X$-exceptional divisor.
Note that for a birational morphism, there are only finitely many (irreducible) exceptional divisors.
Thus $D$ is $g_i$-periodic for any $i$ and hence $G$-periodic.
Then by Proposition \ref{periodic-prop-PropB},
there are at most $\rho(X) - n$ distinct $g$-periodic prime divisors.

Next, we may assume that the irregularity $q(X) = 0$.
This also holds for any resolution of $X$ because $X$ has only klt and hence rational singularities (cf.~\cite[Theorem 5.22]{KM98}).
We only need to prove the claim that
there are only finitely many $g$-periodic prime divisors $D_j$ with $1\le j\le k$ for some $k > 0$.
Indeed, assuming this claim for the time being,
as in the case $q(X) > 0$, we can show that $D_j$ is $G$-periodic for any $j$.
Then by Proposition \ref{periodic-prop-PropB}, we would have the upper bound $\rho(X) - n$.
For this claim, the surface case is well known.
Actually, it follows from the Hodge index theorem and the fact that
every $g$-periodic curve is perpendicular to the nef and big divisor $A\coloneqq L_g + L_{g^{-1}}$
as in the proof of Lemma \ref{periodic-lemma-surface},
where $L_{g^{\pm 1}}$ are the nef divisors corresponding to the first dynamical degree $d_1(g^{\pm 1})$ of $g^{\pm 1}$.
Therefore, we still have to prove this claim for the case $n = 3$.

Suppose to the contrary that the above claim does not hold.
Namely, there are infinitely many distinct $g$-periodic prime divisors $D_j$ with $j\ge 1$. Let
$$\kappa \coloneqq \kappa \big(X,\sum_{j=1}^{r}D_j\big)
= \max\bigg\{ \kappa \big(X,\sum_{j=1}^{t}D_j\big) : D_j \text{ is $g$-periodic, } t\ge 1 \bigg\}$$
for some $r\ge 1$ and denote $E_0 \coloneqq \sum_{j=1}^{r}D_j$.
Replacing $g$ by its power, we may assume that $g(D_j) = D_j$ for all $j\le r$.
As reasoned in Proposition \ref{periodic-prop-PropB}, we have $\kappa\ge 1$.

For any $1\le i\le n-1$, let $E_i\coloneqq g_i^*E_0$.
It is easy to see that $E_i$ is also $g$-periodic since $g$ commutes with each $g_i$,
and hence $\kappa(X,E_i) = \kappa(X,E_0 + E_i) = \kappa$ by the maximality of $\kappa$.
Replacing $E_0$ by some $mE_0$, we may assume that the dominant rational map
$$\Phi_{|E_i|} \colon X \ratmap \Phi_{|E_i|}(X) \subseteq \bP H^0\big(X,\cO_X(E_i)\big)$$
is an Iitaka fibration associated to $E_i$ and its image has dimension equal to $\kappa$ for any $0\le i\le n-1$.
Take a $g$-equivariant resolution $\pi \colon X' \to X$ of $\Sing X$ and $\Bs(|E_i|)$ due to Hironaka \cite{Hironaka77},
such that the linear system $|\pi^*E_i| = |M_i| + F_i,$
where each $M_i$ is base point free, $F_i$ is the fixed component of $|\pi^*E_i|$, and their divisor classes are $g$-stable.
Now the morphism $\Phi_{|M_i|}$ is birational to $\Phi_{|E_i|}$.
Let $Y_i \to \Phi_{|M_i|}(X')$ be the normalization, and
$\phi_i \colon X' \to Y_i$
the induced morphism, which is an algebraic fibre space with connected fibres.
Denote by $A_i$ the ample divisor on $Y_i$ such that $M_i = \phi_i^*A_i$.
We have $\kappa(X',M_0 + M_i) = \kappa(X,E_0 + E_i) = \kappa$ by the maximality of $\kappa$.
Thus the free divisor $M_0 + M_i$ is the pullback of some ample divisor on a variety of dimension $\kappa$,
which implies that $(M_0 + M_i)^{\kappa + 1} = 0$.
In particular, $M_0^{\kappa} \cdot M_i = 0 = M_0 \cdot M_i^{\kappa}$.

We assert that $\kappa\le n-2=1$.
Indeed, by blowing up $Y_i$ and $X'$ further, we may assume that $Y_i$ is also smooth.
Replacing $\phi_i$ by the new morphism, the new $A_i$ on the new $Y_i$ is only nef and big.
Nevertheless, we obtain a $g$-equivariant fibration $\phi_i \colon X'\to Y_i$ of smooth varieties
such that $g$ preserves the nef and big divisor $A_i$ on $Y_i$.
It follows from \cite[Lemma 2.5]{Zhang10-AIM223} that $\kappa\le n-2=1$, thus $\kappa = 1$ in the present case.
(Remark: in what follows, the blowing up of $Y_i$ is unnecessary, since $Y_i$ is a normal and hence a smooth curve.
In particular, the divisor $A_i$ is still ample, and $\phi_i$ is flat and hence equidimensional; see \cite[Proposition~9.7]{Hartshorne}.
Indeed, the argument below works as long as $\phi_i$ is equidimensional.)

For $1\le i\le n-1$, let $C$ be any curve in a general fibre $F_i$ of $\phi_i$.
Take general ample divisors $H_j$ on $X'$ containing $C$ with $1 \le j < n-\kappa$.
Let $S \coloneqq H_1 \cap \cdots \cap H_{n-\kappa-1}$.
Then
$$0\le C \cdot M_0 = C \cdot M_0|_S \le M_i^{\kappa}|_S \cdot M_0|_S = M_i^{\kappa} \cdot M_0 \cdot H_1 \cdots H_{n-\kappa-1} = 0.$$
Thus $A_0 \cdot (\phi_0)_*C = 0$ by the projection formula.
So $\phi_0$ contracts $C$ (and hence the whole $F_i$) by the ampleness of $A_0$.
Then by the Rigidity Lemma \ref{periodic-lemma-rigidity}, $\phi_0 = t_i \circ \phi_i$ for some morphism $t_i \colon Y_i \to Y_0$.
Interchanging the role of $M_0$ with $M_i$, we get another morphism $s_i \colon Y_0 \to Y_i$ such that $\phi_i = s_i \circ \phi_0$.
Hence $\phi_i = s_i \circ t_i \circ \phi_i$.
The surjectivity of $\phi_i$ then implies that $s_i \circ t_i = \id$. Similarly, $t_i \circ s_i = \id$.
Thus $s_i$ and $t_i$ are isomorphisms and inverse to each other by the normality of $Y_i$.
Therefore, we can write $M_i = \phi_i^*A_i = \phi_0^*B_i$ with $B_i \coloneqq s_i^*A_i$ an ample divisor on $Y_0$.

Now the automorphism $g_i$ on $X$ descends to an isomorphism between the bases of the Iitaka fibrations $\Phi_{|E_0|}$ and $\Phi_{|E_i|}$,
while the latter two are birational to $\Phi_{|M_0|}$ and $\Phi_{|M_i|}$, respectively.
So $g_i$ induces an isomorphism from (the normalization of) $\Phi_{|A_0|}(Y_0)$ to (the normalization of) $\Phi_{|B_i|}(Y_0)$,
which is an automorphism of $Y_0$ now.
Thus $G$ acts on $Y_0$ bi-regularly.
Replacing $X \ratmap Y_0$ by a $G$-equivariant resolution $X''$ of the graph,
we have a non-trivial $G$-equivariant fibration between two smooth projective varieties.
Contradicts the maximal dynamical rank assumption on $G$ (cf.~\cite[Lemma 2.10]{Zhang09-Invent}).
This ends the proof of Proposition \ref{periodic-prop-finite-periodic}.
\end{proof}

\phantomsection
\addcontentsline{toc}{section}{Acknowledgments}
\noindent
\textbf{Acknowledgments. }
The second author is supported by NSFC and the Science Foundation of Shanghai (No.~13DZ2260400).
The third author is supported by an ARF of NUS; he would also like to thank the following institutes for the support and hospitality when preparing the paper:
Max Planck Institute for Mathematics in Bonn, IH\'ES in Paris, University of Bayreuth, and East China Normal University.
The authors would like to thank the referee for carefully reading our manuscript and his/her valuable suggestions which help improving the exposition of the paper.


\begin{thebibliography}{99}

\bibitem{Beauville83}
A.~Beauville, \emph{Some remarks on K\"ahler manifolds with $c_1=0$}, Classification of algebraic and analytic manifolds (Katata, 1982), Progr. Math., vol.~39, Birkh\"auser Boston, Boston, MA, 1983, pp.~1--26.

\bibitem{Birkar12}
C.~Birkar, \emph{Existence of log canonical flips and a special LMMP}, Publ. Math. Inst. Hautes \'Etudes Sci. \textbf{115} (2012), 325--368.

\bibitem{BCHM10}
C.~Birkar, P.~Cascini, C.~D.~Hacon, and J.~M\textsuperscript{c}kernan, \emph{Existence of minimal models for varieties of log general type}, J. Amer. Math. Soc. \textbf{23} (2010), no.~2, 405--468.

\bibitem{BL04}
C. Birkenhake and H. Lange, \emph{Complex Abelian Varieties}, second ed., Grundlehren der Mathematischen Wissenschaften [Fundamental Principles of Mathematical Sciences], vol.~302, Springer-Verlag, Berlin, 2004.

\bibitem{BDPP13}
S.~Boucksom, J.-P.~Demailly, M.~P{\u{a}}un, and T.~Peternell, \emph{The pseudo-effective cone of a compact K\"ahler manifold and varieties of negative Kodaira dimension}, J. Algebraic Geom. \textbf{22} (2013), no.~2, 201--248.


\bibitem{CWZ14}
F.~Campana, F.~Wang, and D.-Q.~Zhang, \emph{Automorphism groups of positive entropy on projective threefolds}, Trans. Amer. Math. Soc. \textbf{366} (2014), no.~3, 1621--1638.

\bibitem{Cantat99}
S.~Cantat, \emph{Dynamique des automorphismes des surfaces projectives complexes}, C. R. Acad. Sci. Paris S\'er. I Math. \textbf{328} (1999), no.~10, 901--906.


\bibitem{Debarre01}
O.~Debarre, \emph{Higher-dimensional algebraic geometry}, Universitext, Springer-Verlag, New York, 2001.

\bibitem{Diller11}
J.~Diller, \emph{Cremona transformations, surface automorphisms, and plane cubics}, Michigan Math. J. \textbf{60} (2011), no.~2, 409--440. With an appendix by Igor Dolgachev.

\bibitem{Dinh12}
T.-C.~Dinh, \emph{Tits alternative for automorphism groups of compact {K}\"ahler manifolds}, Acta Math. Vietnam. \textbf{37} (2012), no.~4, 513--529.

\bibitem{DHZ15}
T.-C.~Dinh, F.~Hu, and D.-Q.~Zhang, \emph{Compact {K}\"ahler manifolds admitting large solvable groups of automorphisms}, Adv. Math. \textbf{281} (2015), 333--352. 

\bibitem{DN11}
T.-C.~Dinh and V.-A.~Nguy{\^e}n, \emph{Comparison of dynamical degrees for semi-conjugate meromorphic maps}, Comment. Math. Helv. \textbf{86} (2011), no.~4, 817--840.

\bibitem{DS04}
T.-C.~Dinh and N.~Sibony, \emph{Groupes commutatifs d'automorphismes d'une vari\'et\'e k\"ahl\'erienne compacte}, Duke Math. J. \textbf{123} (2004), no.~2, 311--328.

\bibitem{Fujino07}
O.~Fujino, \emph{What is log terminal?}, Flips for 3-folds and 4-folds, Oxford Lecture Ser. Math. Appl., vol.~35, Oxford Univ. Press, Oxford, 2007, pp.~49--62.

\bibitem{Fujino11}
\bysame, \emph{Fundamental theorems for the log minimal model program}, Publ. Res. Inst. Math. Sci. \textbf{47} (2011), no.~3, 727--789.


\bibitem{Gromov03}
M.~Gromov, \emph{On the entropy of holomorphic maps}, Enseign. Math. (2) \textbf{49} (2003), no.~3-4, 217--235.

\bibitem{Hartshorne}
R.~Hartshorne, \emph{Algebraic geometry}, Graduate Texts in Mathematics, vol.~52, Springer-Verlag, New York-Heidelberg, 1977.

\bibitem{HM07}
C.~D.~Hacon and J.~M\textsuperscript{c}kernan, \emph{On Shokurov's rational connectedness conjecture}, Duke Math. J. \textbf{138} (2007), no.~1, 119--136.

\bibitem{HK10}
C.~D.~Hacon and S.~J.~Kov\'acs, \emph{Classification of higher dimensional algebraic varieties}, Oberwolfach Seminars, 41, Birkh\"auser Verlag, Basel, 2010.

\bibitem{Hironaka77}
H.~Hironaka, \emph{Bimeromorphic smoothing of a complex-analytic space}, Acta Math. Vietnam. \textbf{2} (1977), no.~2, 103--168.

\bibitem{HMZ14}
F.~Hu, S.~Meng, and D.-Q.~Zhang, \emph{Ampleness of canonical divisors of hyperbolic normal projective varieties}, Math. Z. \textbf{278} (2014), no.~3-4, 1179--1193.


\bibitem{Kollar07}
J.~Koll{\'a}r, \emph{Lectures on resolution of singularities}, Annals of Mathematics Studies, vol.~166, Princeton University Press, Princeton, NJ, 2007.


\bibitem{Kollar92}
J.~Koll{\'a}r, et~al., \emph{Flips and abundance for algebraic threefolds}, Ast\'erisque \textbf{211}, Soc. Math. France, Paris, 1992. Papers from the Second Summer Seminar on Algebraic Geometry Held at the University of Utah, Salt Lake City, Utah, August 1991.

\bibitem{KL09}
J.~Koll{\'a}r and M.~Larsen, \emph{Quotients of Calabi-Yau varieties}, Algebra, arithmetic, and geometry: in honor of Yu. I. Manin. Vol. II, Progr. Math., vol.~270, Birkh\"auser Boston, Inc., Boston, MA, 2009, pp.~179--211.

\bibitem{KM98}
J.~Koll{\'a}r and S.~Mori, \emph{Birational geometry of algebraic varieties}, Cambridge Tracts in Mathematics, vol.~134, Cambridge University Press, Cambridge, 1998.

\bibitem{LZ15}
S.~S.~Y.~Lu and D.-Q.~Zhang, \emph{Positivity criteria for log canonical divisors and hyperbolicity}, J. Reine Angew. Math. \textbf{726} (2017), 173--186.

\bibitem{Nakayama04}
N.~Nakayama, \emph{Zariski-decomposition and abundance}, MSJ Memoirs, vol.~14, Mathematical Society of Japan, Tokyo, 2004.

\bibitem{Nakayama10}
\bysame, \emph{Intersection sheaves over normal schemes}, J. Math. Soc. Japan \textbf{62} (2010), no.~2, 487--595.

\bibitem{NZ09}
N.~Nakayama and D.-Q.~Zhang, \emph{Building blocks of \'etale endomorphisms of complex projective manifolds}, Proc. Lond. Math. Soc. (3) \textbf{99} (2009), no.~3, 725--756.



\bibitem{Reid02}
M.~Reid, \emph{La correspondance de McKay}, Ast\'{e}risque (2002), no.~276, 53--72, S\'{e}minaire Bourbaki, Vol. 1999/2000.

\bibitem{Tits72}
J.~Tits, \emph{Free subgroups in linear groups}, J. Algebra \textbf{20} (1972), no.~2, 250--270.

\bibitem{Ueno75}
K.~Ueno, \emph{Classification Theory of Algebraic Varieties and Compact Complex Spaces}, Lecture Notes in Math., vol.~439, Springer-Verlag, Berlin-New York, 1975.

\bibitem{Yomdin87}
Y.~Yomdin, \emph{Volume growth and entropy}, Israel J. Math. \textbf{57} (1987), no.~3, 285--300.

\bibitem{Zhang91}
D.-Q.~Zhang, \emph{Logarithmic Enriques surfaces}, J. Math. Kyoto Univ. \textbf{31} (1991), no.~2, 419--466.

\bibitem{Zhang09-Invent}
\bysame, \emph{A theorem of Tits type for compact K\"ahler manifolds}, Invent. Math. \textbf{176} (2009), no.~3, 449--459.

\bibitem{Zhang09-JDG}
\bysame, \emph{Dynamics of automorphisms on projective complex manifolds}, J. Differential Geom. \textbf{82} (2009), no.~3, 691--722.

\bibitem{Zhang10-AIM223}
\bysame, \emph{The $g$-periodic subvarieties for an automorphism $g$ of positive entropy on a compact K\"ahler manifold}, Adv. Math. \textbf{223} (2010), no.~2, 405--415.


\bibitem{Zhang13}
\bysame, \emph{Algebraic varieties with automorphism groups of maximal rank}, Math. Ann. \textbf{355} (2013), 131--146.

\bibitem{Zhang16-TAMS}
\bysame, \emph{$n$-dimensional projective varieties with the action of an abelian group of rank $n-1$}, Trans. Amer. Math. Soc. \textbf{368} (2016), no.~12, 8849--8872.

\end{thebibliography}
\end{document}